\documentclass[12pt]{article}

\voffset -2cm
\setlength{\oddsidemargin}{.5cm}
\setlength{\evensidemargin}{-.5cm}

\textwidth 16 cm
\textheight 23cm

\usepackage{latexsym}
\usepackage{amssymb}
\usepackage{euscript}
\usepackage{amsmath}
\usepackage[noblocks]{authblk}
\usepackage{xfrac}
\usepackage{enumerate}
\usepackage{index}
\usepackage{xcolor}
\usepackage{graphics}
\usepackage{pictex}
\usepackage[all]{xy}
\usepackage{multicol} 
\usepackage{amsthm}

\newcommand {\N} {{\rm I}\hspace{-1mm}{\rm N}}
\newcommand{\gen}{\reflectbox{$\neg$}}
\newcommand{\set}[1]{\{#1\}}
\newcommand{\lat}{{\bf L}}
\newcommand{\pos}{{\bf P}}
\newcommand{\fx}{{\sf X}}
\newcommand{\fd}{{\sf D}}
\newcommand{\mnu}{^{-1}}
\newcommand{\Ns}{{\bf Ns}}

\newcommand{\uc}{\tfrac{1}{4}}
\newcommand{\cc}{\tfrac{1}{2}}
\newcommand{\tc}{\tfrac{3}{4}}

\renewcommand{\lceil}{\gen}

\newtheorem{definition}{\bf Definition}[section]
\newtheorem{example}[definition]{\bf Example}
\newtheorem{lemma}[definition]{\bf Lemma}
\newtheorem{theorem}[definition]{\bf Theorem}
\newtheorem{remark}[definition]{\bf Observation}
\newtheorem{corollary}[definition]{\bf Corollary}

\title{Free five-valued Nelson Algebras}
\author[1,2]{  Juan Manuel Cornejo}
 \author[1,2]{Andr\'es Gallardo}
  \author[2]{Luiz F. Monteiro}  
  \author[1,2]{Ignacio Viglizzo}
  \affil[1]{INMABB-CONICET-Universidad Nacional del Sur}
  \affil[2]{Departamento de Matem\'atica-Universidad Nacional del Sur}

\begin{document}
	\maketitle

	\hfill {\it In memoriam of Diana Brignole and Antonio Monteiro}

	\thispagestyle{empty}
	\begin{abstract}
		Five-valued Nelson algebras are those satisfying the condition: $((x\to z)\to y)\to(((y \to x)\to y)\to y)=1$.  
		We give alternative equations defining these algebras, and determine the structure and number of elements of the free five-valued Nelson algebra with a finite number of free generators.  
	\end{abstract}

\section{Introduction}\label{S1}
	
  In this paper we present a complete description of  the free five-valued Nelson Algebras with a finite number of generators (see below for the definition). The  problem of determining them was posed by  Antonio Monteiro  to Diana Brignole some time before 1964. At that moment, A.~Monteiro was the director of the INMABB, the Mathematics Institute of Bah\'ia Blanca, and Brignole was one of his students. In 1965, D. Brignole presented a communication in the annual meeting of the UMA (Uni\'on Matem\'atica Argentina) \cite{brignole65algebras}, where she indicated some initial results on these algebras, and a preprint submitted to CONICET \cite{brignole65nelson} (a copy of which we recently found among the correspondence of A. Monteiro), which includes other results and a formula for the number of elements in the free five-valued Nelson algebra $F(n)$, with a set of $n$ free generators, but without a proof. At the end of this preprint, Brignole states, ``The proofs of these results will be indicated in a next paper'' but this next paper was never completed. 
	
	Later on, Brignole continued her studies in the USA, with a grant from CONICET (Consejo Nacional de Investigaciones Científicas y Técnicas, Argentina). During this period, on September 16, 1964,  she wrote a letter \footnote{A copy is held at the National Library of Portugal, Arquivo da Cultura Portuguesa Contemporânea, Campo Grande, Lisbon.} to Dr. A. Monteiro saying {\it ``como Ud. seguramente se imagina yo no termin\'e de demostrarla''}: ``as you surely imagine, I did not finish proving it''. 
	
	In this work we present the previously unpublished results of D.~Brignole in greater detail, as well as new results that are necessary to determine $F(n)$. We prove that the formula provided by D.~Brignole is correct and determines the number of elements in  $F(n)$. In the Appendix we reproduce verbatim the two works by D. Brignole \cite{{brignole65algebras},{brignole65nelson}}, related to this topic.  

It should be emphasized that even tough in this work we use the concept of Priestley representation to give a more modern presentation, it was not necessary, and the determination of the free algebras could have been done by Brignole using the representation of a lattice by its prime elements and filters, with the Birula-Rasiowa operation and the interpolation property. 

\section{Preliminaries}\label{S2}

The notion of Nelson algebra or N-lattice was introduced by H. Rasiowa \cite{rasiowa74algebraic}, and plays a role in the study of constructive propositional calculus with strong negation that is analogous to the role of Boolean algebras in classical propositional calculus. It is due to H. Rasiowa's decisive contribution that constructive propositional calculus with strong negation can be considered a special chapter of algebra, which specifically aims to study Nelson algebras. In 1962, A. Monteiro taught a course at the Universidad Nacional del Sur \cite{monteiro62algebra}, where he presented new results, which, according to his words, were ``obtained in the second semester of 1961, during my stay as a visiting professor at the University of Buenos Aires''.

In the following years, some of these results were published (\cite{monteiro62algebra,monteiro63nelson,monteiro64constructionnotas,monteiro78reguliers,monteiro78lineaires,monteiro80reguliers,monteiro95reguliers,monteiro95heyting,monteiro95nelson,monteiro96nelson}), while others remained unpublished. During those years,  Diana Brignole was his main collaborator, and together they obtained an equational definition of Nelson algebras, \cite{brignole67caracterisation}, using Zorn's Lemma. Later, D. Brignole, \cite{brignole69equational}, provided a purely arithmetic proof of this result. A.~Monteiro and L.~Monteiro, \cite{monteiro96axiomes}, established independent axioms for these algebras, which are given below.

\begin{definition}
	An algebra $(N,1,\sim ,\land ,\lor ,\to )$ with arities $(0,1,2,2,2)$ is said to be an N-lattice or a Nelson algebra if the following equations are satisfied:
	
	\begin{multicols}{2}
	\begin{enumerate}[\normalfont(N1)]
		\item $x\land(x\lor y)=x$,\label{nelsonax1}
		\item $x\land(y\lor z)=(z\land x)\lor(y\land x)$,\label{nelsonax2}
		\item $\sim\sim x=x$,\label{nelsonax3}
		\item $\sim(x\lor y)=\sim x\lor\sim y$,\label{nelsonax4}
		\item $x\land\sim x=(x\land\sim x)\land(y\lor\sim y)$,\label{nelsonax5}
		\item $x\to x=1$,\label{nelsonax6}
		\item $x\to(y\to z)=(x\land y)\to z$,\label{nelsonax7}
		\item $x\land(x\to y)=x\land(\sim x\lor y)$.\label{nelsonax8}
	\end{enumerate}
	\end{multicols}

 For brevity, we say that  $N$ is a   Nelson algebra.
\end{definition}

We can write $0=\sim 1$, and it follows from (N1) to (N4) that $(N,0,1,\sim ,\land ,\lor )$ is a De Morgan algebra, and by (N5), $N$ is a Kleene algebra as well (see \cite{monteiro96axiomes}).

The following quasi-identities hold in a Nelson algebra and will be used as calculation rules throughout the paper. The proofs can be found in A.~Monteiro, \cite{{monteiro63nelson},{monteiro78reguliers},{monteiro78lineaires},{monteiro80reguliers},{monteiro95reguliers},{AM80},{monteiro95nelson},{monteiro96nelson}}, and I.~Viglizzo, \cite{viglizzo99algebras}. The operation $\gen$ is defined by $\gen x=x\to 0$ for all $x$ in a Nelson algebra.

\begin{enumerate}[\normalfont(N1)]
	\setcounter{enumi}{8}
	\item $(x\lor y)\to z= (x\to z)\land (y\to z)$, (A. Monteiro, \cite{monteiro74construction}),\label{111220242}
	\item $x \to(y \to z)= y \to (x \to z)$, \label{detesto}
	\item $x \to(y \to z)= (x\to y) \to (x \to z)$,\label{111220243}
	\item $ \lceil x \leq x\to y$, \label{N19}
	\item $(x\land\sim x)\to y=1$,\label{111220244}
	\item $\sim x , y \leq x\to y$,\label{111220245} 
	\item if $x \leq y$, then $y\to z \leq x \to z$,\label{111220246} 
	\item $1\to x = x$,\label{111220247} 
	\item if $x\leq y$ then $x \to y=1$,\label{111220248} 
	\item $x = y$ if and only if $x\to y=y\to x=\sim y\to \sim x=\sim x\to \sim y=1$, \label{1112202411} 
	\item if $X,Y \subseteq N$ then $\sim (X\cap Y)= \sim X\cap \sim Y$, where $\sim X=\set{\sim x: x\in X}$,\label{111220249} 
	\item $x\to(x\to y)=x\to y$.\label{1112202410}
\end{enumerate}

\subsection{Five-valued Nelson algebras}

Ivo Thomas made in \cite{thomas62finite} a significant contribution to the study of Dummett's $LC$ logic by providing finite axiomatizations for the G\"odel logics that correspond to finite-valued interpretations of $LC$.  He gave formulas that allow to transition from the infinite-valued semantics of Dummett's $LC$ to the finite-valued semantics of Gödel logics. His  axioms constrain the length of implicational chains as follows: let $p_0, p_1, ..., p_{n-1}$ be  well-formed formulas of this calculus and let us define
\[\beta_i = (p_i \rightarrow p_{i+1}) \rightarrow p_0, \mbox{ for } i = 0, 1, ..., n-2\]
then the axiom of Ivo Thomas is:
\[\beta_{n-2} \rightarrow (\beta_{n-3} \rightarrow (... \rightarrow (\beta_0 \rightarrow p_0) ...))\]

Adapting this idea to the particular case of Nelson algebras, with $n=3$ we get the equation:
$$  ((x\rightarrow z)\rightarrow y)\rightarrow(((y \rightarrow x)\rightarrow y)\rightarrow y)=1.$$

We will prove in the next section that the variety of Nelson algebras that satisfy this equation is generated by a chain with five elements, which led Antonio Monteiro to call these algebras five-valued Nelson algebras.

\begin{definition}
	\label{D21}
	A Nelson algebra $N$ is said to be five-valued if for every $x,y,z\in N$ the following equation holds: \[{\rm (NT_3)}\hspace{5mm} ((x\to z)\to y)\to(((y \to x)\to y)\to y)=1.\]
\end{definition}

This equation is analogous to the  one that defines three-valued Heyting algebras. \cite{monteiro72algebre}

\begin{example}
	For any $n \in \N$, $n\geq 2$, let $C_n=\{\displaystyle{\tfrac{j}{n-1}}: 0\leq j \leq n-1\}$. This set with its usual ordering is a bounded distributive lattice, with $0$ as its bottom element and   $1$ as the top.  If we put  $\sim \displaystyle{\tfrac{j}{n-1}}= 1- \displaystyle{\tfrac{j}{n-1}}=\displaystyle{\tfrac{n-1-j}{n-1}}$ for $ 0\leq j \leq n-1$, then 
	$C_n$ is a  Kleene algebra. If $n$ is odd and $j= \displaystyle{\tfrac{n-1}{2}}$ then $\sim \displaystyle{\tfrac{j}{n-1}}= \displaystyle{\tfrac{j}{n-1}}$, that is,  $C_n$ is a centered Kleene algebra.
	
	If   $x,y \in C_n$, we define 
	$$x \to y =
	\left\{
	\begin{array}{ll}
		1  & \mbox{\rm  if } x\leq y  \mbox{\rm\ or\ } x\le\sim x, \\
		\sim x \lor y & \mbox{\rm otherwise.} 
	\end{array}
	\right.
	$$
	
	Thus endowed, $C_n$ is a  Nelson algebra. In particular, the operations $\sim$ and $\to$ for  $C_5=\{0,\uc,\cc,\tc,1\}$ are indicated in this table: 
	$$
	\begin{tabular}{c|ccccc|c}
		$\to$ &  $0$  & $\uc$ & $\cc$ & $\tc$ & $1$ & $\sim x$ \\
		 \hline
		 $0$  &  $1$  &  $1$  &  $1$  &  $1$  & $1$ &   $1$       \\[2mm]
		$\uc$ &  $1$  &  $1$  &  $1$  &  $1$  & $1$ &  $\tc$       \\[2mm]
		$\cc$ &  $1$  &  $1$  &  $1$  &  $1$  & $1$ &  $\cc$       \\[2mm]
		$\tc$ & $\uc$ & $\uc$ & $\cc$ &  $1$  & $1$ &  $\uc$        \\[2mm]
		 $1$  &  $0$  & $\uc$ & $\cc$ & $\tc$ & $1$ &   $0$    
	\end{tabular}
	$$
	It is easy to check that {\rm(NT$_3$)} holds in $C_5$.
\end{example}

\begin{remark}
	\label{R32}
	The Nelson subalgebras of  $C_5$, are $S_5=C_5$, $S_2=\{0,1\}\cong C_2$, \linebreak[4] $S_3=\{0,\tfrac{1}{2},1\}\cong C_3$, and $S_4=\{0,\tfrac{1}{4},\tfrac{3}{4},1\}\cong C_4$.  Therefore $ {\rm NT}_3$ holds in all of them. 
\end{remark}

\begin{example}
	Equation ${\rm (NT}_3)$ does not hold on $C_6$. Indeed, for $x=\tfrac{3}{5}, y=\tfrac{4}{5}$, and $z=\tfrac{2}{5}$, we have that  $((x\to z)\to y)\to(((y \to x)\to y)\to y)=\tfrac{4}{5}\neq 1$.
\end{example}

\begin{remark}
	\label{R51} If $N$ is a Nelson algebra such that $|N|=t$, where $2\leq t\leq 5$, we can analyze the structure of its underlying distributive lattice.  For $t=5$ the lattice must be isomorphic to either  $\{0\}\oplus (C_2 \times C_2)$,  $ (C_2 \times C_2)\oplus \{1\}$ or $C_5$, but in the first two cases, it is not possible to define the involution $\sim$. Thus we can conclude that  $N\cong C_t$ for $2\leq t\leq 5$   or $N\cong C_2\times C_2$ .
\end{remark}

In a similar fashion, three-valued Heyting algebras are defined as those satisfying
$$ {\rm (T}_3) \;\;\;((x\Rightarrow z)\Rightarrow y)\Rightarrow(((y \Rightarrow x)\Rightarrow y)\Rightarrow y)=1,$$
where $\Rightarrow$ is the implication operation in Heyting algebras. 

L. Monteiro proved in \cite{monteiro72algebre} that this condition is equivalent to any of the following conditions:
\begin{enumerate}[\normalfont(C1)]
	\item $(\neg x\Rightarrow y)\Rightarrow(((y \Rightarrow x)\Rightarrow y)\Rightarrow y)=1$,
	\item $((y \Rightarrow x)\Rightarrow y)\Rightarrow ((\neg x\Rightarrow y)\Rightarrow y)=1$,
	\item $y=(\neg x\Rightarrow y)\wedge ((y \Rightarrow x)\Rightarrow y)=(\neg x \vee (y\Rightarrow x))\Rightarrow y$,
	\item $y= ((x\Rightarrow z)\Rightarrow y)\wedge((y \Rightarrow x)\Rightarrow y)$.
\end{enumerate}

More recently,  A. Petrovich and C. Scirica, \cite{PS22} proved that three-valued Heyting algebras can be characterized as the ones which verify the condition:
\[{\rm (PS)}\;\;(x \vee \neg x) \vee (y \vee \neg y) \vee[(x\Rightarrow  y) \wedge (y\Rightarrow  x)]=1.\]
Luiz Monteiro proved in an unpublished note  that the condition  {\rm (PS)} and \[{\rm (LMN)}\;\; x \vee \neg y \vee (x\Rightarrow y)=1\] are equivalent.

We will present now the analogous proofs for the equivalence of all these conditions for Nelson algebras.

\begin{theorem}\label{equivalenciasFN}
	In a Nelson algebra $N$, the following conditions are equivalent for every $x, y, z \in N$:
	\begin{description}
		\item[(NT$_3$)] $((x\rightarrow z)\rightarrow y)\rightarrow(((y \rightarrow x)\rightarrow y)\rightarrow y)=1$,
		\item[(FN1)] $x \vee \neg y \vee (x\rightarrow y)=1$,
		\item[(FN2)]  $y= ((x\rightarrow z)\rightarrow y)\wedge((y \rightarrow x)\rightarrow y)$,
		\item[(FN3)] $y=(\lceil x\rightarrow y)\wedge ((y \rightarrow x)\rightarrow y)$,
		\item[(FN4)] $((y \rightarrow x)\rightarrow y)\rightarrow ((\lceil x\rightarrow y)\rightarrow y)=1$,
		\item[(FN5)] $(\lceil x\rightarrow y)\rightarrow(((y \rightarrow x)\rightarrow y)\rightarrow y)=1$.
	\end{description}
\end{theorem}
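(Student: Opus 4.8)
The plan is to prove the six conditions equivalent through a small web of implications, isolating a single genuinely hard step. I will treat (NT$_3$), (FN4), (FN5) as one cluster of ``$=1$'' conditions, (FN2), (FN3) as a cluster of ``meet'' conditions, and handle (FN1) separately, relying throughout on the calculation rules (N6)--(N20) together with the Kleene inequality $x\wedge\sim x\le y\vee\sim y$ coming from (N5). Two elementary facts are used repeatedly: by (N14) one always has $y\le a\to y$ for every $a$, and by (N12) one has $\lceil x\le x\to z$ (here $\lceil x=x\to 0$), whence (N15) gives $(x\to z)\to y\le(\lceil x)\to y$.

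The first cluster is pure rewriting. For (NT$_3$)$\Rightarrow$(FN5) I substitute $z=0$ and use $\lceil x=x\to 0$. For the converse, from $(x\to z)\to y\le(\lceil x)\to y$ a second application of (N15), with $c=((y\to x)\to y)\to y$, gives $((\lceil x)\to y)\to c\le((x\to z)\to y)\to c$; the left side is $1$ by (FN5), hence so is the right side, which is (NT$_3$). The equivalence (FN4)$\Leftrightarrow$(FN5) is immediate from the exchange law (N10), since $(\lceil x\to y)\to(((y\to x)\to y)\to y)$ and $((y\to x)\to y)\to((\lceil x\to y)\to y)$ are literally the same element. For the second cluster, (FN2)$\Rightarrow$(FN3) is again $z=0$, while (FN3)$\Rightarrow$(FN2) follows because $y$ lies below both meetands by (N14), and $(x\to z)\to y\le(\lceil x)\to y$ forces the meet of (FN2) below the meet of (FN3), which equals $y$. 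Finally (FN3)$\Rightarrow$(FN5) is one line: by (N7) the left side of (FN5) equals $\bigl((\lceil x\to y)\wedge((y\to x)\to y)\bigr)\to y=y\to y=1$, using (FN3) and (N6).

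It remains to return from the ``$=1$'' cluster to the ``meet'' cluster and to thread (FN1) into the chain. Write $a=(\lceil x\to y)\wedge((y\to x)\to y)$, which by (N9) equals $(\lceil x\vee(y\to x))\to y$. By (N14) we always have $y\le a$, and by (N7) condition (FN5) is exactly $a\to y=1$. Feeding $a\to y=1$ into (N8) gives $a=a\wedge(\sim a\vee y)=(a\wedge\sim a)\vee y$, so the equality $a=y$ of (FN3) is \emph{equivalent} to the single inequality $a\wedge\sim a\le y$. This reduction is the heart of the matter: the passage from $a\to y=1$ to $a\le y$ fails for general Nelson algebras (for instance $\tfrac12\to\tfrac14=1$ in $C_5$ while $\tfrac12\not\le\tfrac14$), so the five-valued hypothesis must be used in an essential way to bound the ``inconsistent part'' $a\wedge\sim a$ by $y$.

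I expect the inequality $a\wedge\sim a\le y$ to be the main obstacle. The Kleene law only yields $a\wedge\sim a\le y\vee\sim y$, which is too weak, so the extra room must come from substituting suitably built terms into the universally quantified identity (NT$_3$)/(FN5) and simplifying with the De Morgan laws (N3), (N4). It is exactly here that (FN1) enters: following L.~Monteiro's and Petrovich--Scirica's treatment of three-valued Heyting algebras, the covering identity $x\vee\lceil y\vee(x\to y)=1$ of (FN1) (where the $\neg$ there is the weak negation $\lceil\,\cdot\,=\,\cdot\to 0$) is the convenient bridge, and I would prove, via $\sim$ and De Morgan, that (FN1) is equivalent to $a\wedge\sim a\le y$, thereby closing the cycle (FN5)$\Rightarrow$(FN1)$\Rightarrow$(FN3)$\Rightarrow$(FN5). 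Establishing these two (FN1)-links, and with them the inequality above, is the step I expect to demand real computation rather than routine rewriting.
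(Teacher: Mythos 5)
Your reductions among (NT$_3$), (FN4), (FN5), the equivalence of (FN2) and (FN3), and the link (FN3)$\Rightarrow$(FN5) are all correct, and in places cleaner than the paper's (e.g.\ the direct derivation (NT$_3$)$\Rightarrow$(FN5) by setting $z=0$, and (FN3)$\Rightarrow$(FN2) by sandwiching the meet between $y$ and the meet of (FN3)). The observation that, writing $a=(\lceil x\to y)\wedge((y\to x)\to y)$, condition (FN5) says $a\to y=1$ and that (N8) together with $y\le a$ reduces (FN3) to the single inequality $a\wedge\sim a\le y$, is also correct, and your $C_5$ example showing that $a\to y=1$ does not imply $a\le y$ correctly explains why this residual inequality is the genuine content.

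However, the proof is not complete: the two implications you defer --- (NT$_3$)/(FN5)$\Rightarrow$(FN1), and (FN1)$\Rightarrow$(FN2)/(FN3) (equivalently your conjectured equivalence of (FN1) with $a\wedge\sim a\le y$) --- are precisely the substantive part of the theorem, and you present them only as steps you ``would prove'' and ``expect to demand real computation.'' Everything you do establish is rewriting valid in any Nelson algebra; the identity-specific work lives entirely in the deferred links, so your cycle (FN5)$\Rightarrow$(FN1)$\Rightarrow$(FN3)$\Rightarrow$(FN5) does not close. For comparison, the paper fills this in as follows. For (NT$_3$)$\Rightarrow$(FN1) it shows that every upper bound $z$ of $\{y,\lceil x, y\to x\}$ equals $1$: from $\lceil x\le z$ one gets $\lceil x\to z=1$ by (N17); from $y\le z$ one gets $z\to x\le y\to x\le z$ and hence $(z\to x)\to z=1$; substituting into the instance $((x\to 0)\to z)\to(((z\to x)\to z)\to z)=1$ of (NT$_3$) and simplifying with (N16) yields $z=1$, and applying this to $z=y\vee\lceil x\vee(y\to x)$ gives (FN1). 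For (FN1)$\Rightarrow$(FN2) it computes $(y\vee\lceil x\vee(y\to x))\to y=1\to y=y$, expands the left-hand side with (N9) into $(y\to y)\wedge(\lceil x\to y)\wedge((y\to x)\to y)$, drops the factor $y\to y=1$, and then inserts the factor $(x\to z)\to y$ at no cost using $y\le(x\to z)\to y$ from (N14), recombining with (N9) and $\lceil x\le x\to z$ from (N12). Neither step is long, but neither follows from the scaffolding you built; until you supply arguments of this kind the proof has a genuine gap.
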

\begin{proof}
	\textbf{(NT$_3$)} implies \textbf{(FN1)}:
	
	We prove that $1$ is the least upper bound of the set $\{y, \lceil x, y \to x \}$.
	
	Let  $z$ be such that  (1) $\lceil x \leq z$, (2) $ y \leq z$,  and (3) $y \to x \leq z$.\\
	From  (1), by (N\ref{111220248}) we get (4) $\lceil x \to z=1$. \\
	From  (2), by (N\ref{111220246}) we get $z\to x \leq y \to x{\stackrel{{\rm (3)}}{\leq}}z $, so (5) $(z\to x)\to  z=1$. Then 	
	$$1 {\stackrel{{\rm ({\rm NT}_3)}}{=}} ((x\to 0)\to z)\to (((z\to x)\to z)\to z)=$$
	$$(\lceil x\to z)\to ((z\to x)\to z)\to z){\stackrel{{\rm (4)}}{=}}1\to ((z\to x)\to z)\to z)=$$
	$$((z\to x)\to z)\to z{\stackrel{{\rm (5)}}{=}}1\to z=z,$$
	where the last two lines are obtained applying (N\ref{111220247}).
	
	\textbf{(FN1)} implies \textbf{(FN2)} 
	
	$$y\vee \lceil  x \vee (y\to x){\stackrel{{\rm (FN1)}}{=}}1$$
	so
	$$(y\vee \lceil  x \vee (y\to x))\to y=1\to y{\stackrel{{\rm(N\ref{111220247})}}{=}}y$$
	hence by (N\ref{111220242})
	$$(y\to y)\wedge (\lceil  x\to y)  \wedge  ((y\to x)\to y)=y.$$
	Then by (N\ref{nelsonax6}) 
	$$ (\lceil  x\to y)  \wedge  ((y\to x)\to y)=y.$$
	Therefore 
	$$ ((x \to z)\to y)\wedge (\lceil  x\to y)  \wedge  ((y\to x)\to y)=((x \to z)\to y)\wedge y{\stackrel{{\rm(N\ref{111220245})}}{=}}y.$$
	By (N\ref{111220242}) 
	$$ ((x \to z)\vee \lceil  x)\to y  )\wedge  ((y\to x)\to y)=y,$$
	so by (N\ref{N19}) we obtain
	$$ ((x \to z)\to y  )\wedge  ((y\to x)\to y)=y.$$
	
	\textbf{(FN2)} implies \textbf{(FN3)}. 
	
	Putting $z=0$ in (FN2) we get
	$$y= ((x\rightarrow 0)\rightarrow y)\wedge((y \rightarrow x)\rightarrow y)= ((\gen x\rightarrow y)\wedge((y \rightarrow x)\rightarrow y).$$
	
	\textbf{(FN3)} implies \textbf{(FN4)}. 
	$$y{\stackrel{{\rm (FN3)}}{=}} ((y \rightarrow x)\rightarrow y) \wedge (\gen x\rightarrow y)$$
	$$1{\stackrel{({\rm N\ref{nelsonax6})}}{=}}y \rightarrow y= [((y \rightarrow x)\rightarrow y) \wedge (\gen x\rightarrow y)]\rightarrow y{\stackrel{{\rm(N\ref{nelsonax7})}}{=}}
	((y \rightarrow x)\rightarrow y)\rightarrow ((\gen x\rightarrow y)\rightarrow y).$$
	
	\textbf{(FN4)} is equivalent to  \textbf{(FN5)}. 
	
	By (N\ref{detesto}),
	$$((y \rightarrow x)\rightarrow y)\rightarrow ((\gen x\rightarrow y)\rightarrow y)=(\gen x\rightarrow y)\rightarrow (((y \rightarrow x)\rightarrow y)\rightarrow y),$$ so the equivalence follows.
	
	\textbf{(FN5)} implies \textbf{(NT$_3$)}.
	
	$\gen x{{\rm\stackrel{(N\ref{N19})}{\leq}}}x\rightarrow z$ so by (N\ref{111220246}) $(x\rightarrow z) \rightarrow y \leq \neg x \rightarrow y,$ and using (N\ref{111220246}) again we get:
	
	$$1{{\rm\stackrel{(FN5)}{=}}}(\gen x \rightarrow y)\rightarrow (((y \rightarrow x)\rightarrow y)\rightarrow y)\leq ((x \rightarrow z)\rightarrow y)\rightarrow (((y \rightarrow x)\rightarrow y)\rightarrow y).$$
\end{proof}

\section{Prime filters, deductive systems and representation theorems} \label{S3}
	
\begin{definition} A subset $F$ of a Nelson algebra $N$ is a \emph{filter} if:
\begin{enumerate}[\normalfont (F1)]
	\item $1\in F$,\label{F1}
	\item if $x\in F, x\le y$ then $y\in F$,\label{F2}
	\item for all $x,y\in F$, $x\land y\in F$.\label{F3}
\end{enumerate}
A proper filter $P$ of a Nelson algebra $N$ is a \emph{prime filter} if $a\lor b\in P$ implies $a\in P$ or $b\in P$.
\end{definition}

\begin{definition}\label{RBR}	
Let $X(R)$ the set of all prime filters of a De Morgan algebra $R$. If $P\in X(R)$ let $\varphi(P)=\complement\sim P=\sim\complement P$ {\rm (Birula-Rasiowa transformation, {\rm H. Rasiowa \cite{bialynicki57quasiboole}})}. We know that $\varphi(P)\in {X}(R)$. Since every Nelson algebra $N$ is a Kleene algebra, it is well known that $P$ and $\varphi(P)$ are comparable {\rm \cite{cignoli86class}}.	
\end{definition}
	
\begin{definition}
A subset $D$ of a Nelson algebra $N$ is called a \emph{deductive system} if:
\begin{enumerate}[\normalfont (D1)]
	\item $1\in D$,\label{defds1}
	\item If $x,x\to y\in D$ then $y\in D$.\label{defds2}
\end{enumerate}
We will denote with $\mathcal{D}(N)$ the family of all deductive systems of $N$. A deductive system $D$ is \emph{proper} if $D\ne N$. All deductive systems are filters. 
	
We say that $D\in {\cal D}(N)$ is:
\begin{enumerate}[\normalfont(1)]
	\item \emph{Maximal}, if $D$ is proper, and there is no proper deductive system that properly contains $D$. We denote with $\mathcal{M}(N)$ the family of all maximal deductive systems of $N$.
	\item \emph{Irreducible}, if $D$ is proper, and if $D=D_1\cap D_2$ with $D_1,D_2 \in {\cal D}(N)$ then \linebreak{$D=D_1$} or $D=D_2$. We denote with ${\cal I}(N)$ the family of all irreducible deductive systems of $N$.
	\item \emph{Completely irreducible}, if $D$ is proper and if $D=\bigcap\limits_{i\in I} D_i$ with $D_i \in {\cal D}(N)$ for all $i\in I$ then there exists an index $i\in I$ such that $D=D_i$. We denote with ${\cal CI}(N)$ the family of all completely irreducible deductive systems of $N$.
	\item \emph{Bounded to an element} $d\ne 1$ if it is maximal among those that do not include $d$.
\end{enumerate}
\end{definition}

	\begin{remark}
	\label{Rnuev}
	From the definitions, it's clear that  ${\cal CI}(N)\subseteq {\cal I}(N).$
\end{remark}

	\begin{lemma} {\rm (H. Rasiowa, \cite{rasiowa74algebraic})}
	\label{L41}
	If $N$ is a non-trivial Nelson algebra, ${\cal M}(N)\subseteq{\cal I}(N).$ 
\end{lemma}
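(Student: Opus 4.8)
The plan is to argue directly from the definitions, using the single observation that a maximal deductive system is, by definition, maximal \emph{among the proper} deductive systems, so that any deductive system containing it must be either equal to it or equal to the whole algebra $N$.

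First I would fix $M \in \mathcal{M}(N)$. By definition $M$ is proper, which already supplies one of the two requirements in the definition of irreducibility. To establish the other, I would suppose that $M = D_1 \cap D_2$ for some $D_1, D_2 \in \mathcal{D}(N)$ and aim to prove $M = D_1$ or $M = D_2$. Since an intersection is contained in each of its factors, we immediately have $M \subseteq D_1$ and $M \subseteq D_2$.

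The key step is to classify each factor $D_i$. Because $M$ is maximal and $M \subseteq D_i$, the deductive system $D_i$ cannot properly contain $M$ while remaining proper; hence either $D_i = M$, or else $D_i$ fails to be proper, i.e.\ $D_i = N$. Thus for each $i$ we have $D_i = M$ or $D_i = N$.

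Finally I would carry out the short case split. If $D_1 = N$, then $M = N \cap D_2 = D_2$; symmetrically, if $D_2 = N$, then $M = D_1$; and if both are proper, then $D_1 = D_2 = M$, so again $M = D_1$. In every case $M$ coincides with one of the two factors, and therefore $M$ is irreducible, giving $\mathcal{M}(N) \subseteq \mathcal{I}(N)$. I do not anticipate a real obstacle here: the only point deserving care is that the definition of irreducibility allows $D_1$ or $D_2$ to be the improper system $N$, and that is precisely the situation covered by the first two sub-cases.
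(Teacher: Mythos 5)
Your proof is correct and follows essentially the same route as the paper: the intersection is contained in each factor, and maximality then pins each factor down. Your explicit case split for the possibility $D_i = N$ is slightly more careful than the paper's one-line version, which asserts $D = D_1 = D_2$ outright, but the underlying argument is identical.
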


\begin{proof}
	Let $D$ be a maximal deductive system. If $D=D_1\cap D_2$, then $D\subseteq D_i$, and since $D$ is maximal, we must have $D=D_1=D_2$.
\end{proof}

If $X$ is an arbitrary subset of elements of a Nelson algebra, we denote with $D(X)$ the deductive system generated by $X$. This is, the least deductive system of $N$ containing $X$. We denote with $D(D,a)$ the deductive system generated by $D\cup \{a\}$. We will use the the following result that gives the form of the generated deductive system for some special cases (see also A.~Monteiro, \cite{AM80}, Viglizzo, \cite{viglizzo99algebras}, pp. 27 and 36).

\begin{lemma} {\rm (A. Monteiro, \cite{monteiro62algebra})} If $N$ is a Nelson algebra, $D$ is a deductive system and $a, b \in N$, 
	\label{L42}
	\begin{itemize}
		\item
  $D(D,a)=\{x\in N:a\to x\in D\}$, and
		\item $D(\set{a,b})=\{x\in N: (a \land b)\to x=1\}=\{x\in N: a \to ( b\to x)=1\}$. 
	\end{itemize}
\end{lemma}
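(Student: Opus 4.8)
The plan is to establish the first item directly, by showing that the right-hand set is a deductive system lying between $D\cup\{a\}$ and any deductive system containing $D\cup\{a\}$, and then to deduce the second item by applying the first one twice. Set $E=\{x\in N:a\to x\in D\}$. I would first verify that $E$ is a deductive system. For (D\ref{defds1}), note $a\le 1$ gives $a\to 1=1$ by (N\ref{111220248}), and $1\in D$ by (D\ref{defds1}), so $1\in E$. For (D\ref{defds2}), suppose $x\in E$ and $x\to y\in E$, so that $a\to x\in D$ and $a\to(x\to y)\in D$; rewriting $a\to(x\to y)=(a\to x)\to(a\to y)$ by (N\ref{111220243}) and applying detachment (D\ref{defds2}) in $D$ to the pair $a\to x,\ (a\to x)\to(a\to y)$ yields $a\to y\in D$, that is $y\in E$.

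Next I would check $D\cup\{a\}\subseteq E$. We have $a\in E$ because $a\to a=1\in D$ by (N\ref{nelsonax6}) and (D\ref{defds1}); and $D\subseteq E$ because for $d\in D$ the inequality $d\le a\to d$ supplied by (N\ref{111220245}), together with upward closure (F\ref{F2}), gives $a\to d\in D$. Finally, for minimality, let $F$ be any deductive system with $D\cup\{a\}\subseteq F$ and let $x\in E$; then $a\to x\in D\subseteq F$ and $a\in F$, so (D\ref{defds2}) forces $x\in F$. Hence $E\subseteq F$, which together with the previous two facts proves $E=D(D,a)$.

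For the second item, the two right-hand descriptions coincide at once, since (N\ref{nelsonax7}) gives $a\to(b\to x)=(a\land b)\to x$. To identify this set with $D(\{a,b\})$, I would apply the first item with $D=\{1\}$, which is the least deductive system (it is one by (N\ref{111220247}), and it is contained in every deductive system by (D\ref{defds1})); this yields $D(\{a\})=D(\{1\},a)=\{x:a\to x=1\}$. Applying the first item again, now with the deductive system $D(\{a\})$ and the element $b$, gives $D(D(\{a\}),b)=\{x:b\to x\in D(\{a\})\}=\{x:a\to(b\to x)=1\}$. Since adjoining $b$ to the deductive system generated by $a$ produces exactly the deductive system generated by $\{a,b\}$, the left-hand side is $D(\{a,b\})$, completing the proof.

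The only genuinely non-routine step is the closure of $E$ under detachment: the point is that the hypothesis $a\to(x\to y)\in D$ is not in a form to which detachment in $D$ applies, and the identity (N\ref{111220243}) is precisely what converts it into the required product $(a\to x)\to(a\to y)$. Everything else is immediate from the quasi-identities already listed, and the second item is a purely formal iteration of the first.
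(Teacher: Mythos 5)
Your proof is correct. Note that the paper itself gives no proof of this lemma — it is quoted from A.~Monteiro and Viglizzo with a citation — so there is nothing internal to compare against; your argument (show the right-hand set is a deductive system containing $D\cup\{a\}$ and contained in any such, using (N\ref{111220243}) for closure under detachment, then obtain the second item by iterating the first starting from $\{1\}$) is exactly the standard one found in those references, and every step checks out against the quasi-identities (N1)--(N20) listed in Section~\ref{S2}.
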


	\begin{lemma}
	\label{L416} 
	{\rm (A. Monteiro, \cite{monteiro62algebra})}
	If $M \in {\cal M}(N)$ and $x\notin M$ then   $x\to y \in M$ for every $y\in N$.
\end{lemma}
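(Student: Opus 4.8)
The plan is to exploit the explicit description of the deductive system generated by adjoining a single element, as given in Lemma~\ref{L42}, together with the maximality of $M$. First I would consider the deductive system $D(M,x)$ generated by $M\cup\{x\}$. Since $x\in D(M,x)$ but $x\notin M$, this deductive system strictly contains $M$.

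Next, I would use the definition of maximality: $M$ is maximal means it is proper and no proper deductive system properly contains it. Because $D(M,x)$ is a deductive system strictly containing $M$, it cannot be proper, and therefore $D(M,x)=N$. Finally, I would invoke the first item of Lemma~\ref{L42}, namely $D(M,x)=\{w\in N: x\to w\in M\}$. Combining this with $D(M,x)=N$, I conclude that every $y\in N$ satisfies $x\to y\in M$, which is exactly the assertion of the lemma.

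I do not anticipate any serious obstacle, since the whole argument is essentially a direct application of Lemma~\ref{L42} read against the definition of a maximal deductive system. The only point meriting a moment of care is the implication ``$D(M,x)$ strictly contains $M$, hence $D(M,x)=N$'': this is immediate from maximality, but it is worth recording explicitly that a deductive system lying strictly above a maximal one must be the entire algebra, so that the identification $D(M,x)=N$ is justified before the explicit form of $D(M,x)$ is used.
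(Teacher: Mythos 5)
Your argument is correct and is essentially the paper's own proof: both pass to $D(M,x)$, conclude $D(M,x)=N$ from maximality, and then read off $x\to y\in M$ from the explicit description in Lemma~\ref{L42}. Your version merely spells out the intermediate step that $D(M,x)$ strictly contains $M$, which the paper leaves implicit.
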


\begin{proof} Let $M \in {\cal M}(N)$, $x \notin M$. Since $M$ is maximal, $D(M,x)=N$ and therefore by Lemma 	\ref{L42}, for any $y\in N$,  $x\to y\in M$.
\end{proof}

\begin{lemma} {\rm (A.~Monteiro, \cite{AM80,monteiro2017})}
	\label{L43res} 
	If $N$ is a  Nelson algebra, $D\in {\cal D}(N)$, $D\neq N$,  and $d\in D$, then  
	 $\sim d \notin D$ and  $D\cap \sim D=\emptyset$.
\end{lemma}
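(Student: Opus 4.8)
The plan is to establish the two assertions in sequence, deducing the second from the first.

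For the claim $\sim d\notin D$, I would argue by contradiction. Suppose $\sim d\in D$ as well. Since every deductive system is a filter, it is closed under meets by (F\ref{F3}), so $d\land\sim d\in D$. The decisive ingredient is the quasi-identity (N\ref{111220244}), namely $(x\land\sim x)\to y=1$ for all $x,y$; applied with $x=d$ this gives $(d\land\sim d)\to y=1$ for every $y\in N$. As $1\in D$ by (D\ref{defds1}), we have $(d\land\sim d)\to y\in D$, and since also $d\land\sim d\in D$, modus ponens (D\ref{defds2}) yields $y\in D$. Because $y$ is arbitrary, this forces $D=N$, contradicting $D\neq N$.

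For the claim $D\cap\sim D=\emptyset$, suppose toward a contradiction that some $x\in D\cap\sim D$. Then $x\in D$, and from $x\in\sim D$ we get $x=\sim e$ for some $e\in D$. By the involution law (N\ref{nelsonax3}), $\sim x=\sim\sim e=e\in D$. Thus both $x$ and $\sim x$ belong to $D$, which contradicts the first part of the lemma applied to the element $x$ in place of $d$. Hence no such $x$ exists.

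I do not anticipate a genuine obstacle here: the whole argument rests on recognizing that the Kleene character of $N$ is encoded in the identity (N\ref{111220244}), together with the elementary closure properties of filters and deductive systems. The only point requiring a little care is to phrase the first part so that it applies to an arbitrary element and not merely to $d$; this generality is precisely what makes the reduction of the second statement to the first immediate, since $\sim\sim e=e$ lets us transfer membership in $\sim D$ back into membership in $D$.
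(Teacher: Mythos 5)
Your proof is correct and follows essentially the same route as the paper's: the first claim by contradiction via closure under meets and the identity $(x\land\sim x)\to y=1$ from (N\ref{111220244}), and the second claim by reducing membership in $D\cap\sim D$ to the first claim through the involution $\sim\sim e=e$. Your explicit remark that the first part holds for an arbitrary element of $D$ is a slight clarity improvement over the paper, which uses this generality implicitly.
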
 
\begin{proof}
	Assume that both $d$ and $\sim d$ are in $D$. Then $d\land\sim d\in D$ and by (N\ref{111220244}), \linebreak[4] $(d\land \sim d)\to y=1$ for all $y\in N$, so $D=N$, a contradiction. 
	
	To see the second part, consider some $d\in D\cap\sim D$. Then $d\in D$ and $d=\sim c$ form some $c\in D$, but if this is true, $c=\sim d$ is in $D$, and we just proved that this is not the case.  
\end{proof}

\begin{lemma}
	\label{L410} {\rm (A. Monteiro, \cite{AM80})}
	If $N$ is a Nelson algebra, and $D\in {\cal I}(N)$ then {$D\in X(N)$} and $D\subseteq\varphi(D)$.
\end{lemma}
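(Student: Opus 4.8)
The plan is to establish the two assertions separately, leaning on the description of generated deductive systems in Lemma~\ref{L42}, on the join-implication identity (N\ref{111220242}), and on Lemma~\ref{L43res}.

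First I would show that $D$ is a prime filter. Every deductive system is a filter, and an irreducible deductive system is proper by definition, so it only remains to verify primeness: that $a\lor b\in D$ forces $a\in D$ or $b\in D$. I would argue by contradiction, assuming $a\lor b\in D$ while $a\notin D$ and $b\notin D$, and then exhibit $D$ as an intersection of two strictly larger deductive systems, contradicting irreducibility. The natural candidates are $D(D,a)$ and $D(D,b)$, which by Lemma~\ref{L42} are $\{x:a\to x\in D\}$ and $\{x:b\to x\in D\}$. Each strictly contains $D$: since $a\to a=1\in D$ by (N\ref{nelsonax6}), we have $a\in D(D,a)\setminus D$, and likewise $b\in D(D,b)\setminus D$.

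The crux is the identity $D=D(D,a)\cap D(D,b)$. The inclusion $\subseteq$ is immediate. For the reverse inclusion, suppose $x$ lies in both sets, so that $a\to x\in D$ and $b\to x\in D$; by (F\ref{F3}) their meet $(a\to x)\land(b\to x)$ lies in $D$, and by (N\ref{111220242}) this meet equals $(a\lor b)\to x$. Since $a\lor b\in D$, applying (D\ref{defds2}) yields $x\in D$. Thus $D=D(D,a)\cap D(D,b)$ with both factors strictly larger than $D$, contradicting irreducibility. Hence $D$ is prime, that is $D\in X(N)$.

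For the containment $D\subseteq\varphi(D)$, I would first unfold the definition $\varphi(D)=\complement\sim D$. Because $\sim$ is an involution (N\ref{nelsonax3}), membership $x\in\sim D$ is equivalent to $\sim x\in D$, so $\varphi(D)=\{x:\sim x\notin D\}$. Now take any $d\in D$; since $D$ is proper, Lemma~\ref{L43res} gives $\sim d\notin D$, which is exactly $d\in\varphi(D)$, completing the containment. I expect the main obstacle to be the identity $D=D(D,a)\cap D(D,b)$, where the distribution of implication over joins (N\ref{111220242}) must be combined correctly with the modus-ponens closure (D\ref{defds2}); once that is in hand, the remainder is routine bookkeeping with the definitions.
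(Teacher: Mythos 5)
Your proposal is correct and follows essentially the same route as the paper: both prove primeness via the identity $D=D(D,a)\cap D(D,b)$ using Lemma~\ref{L42} and (N\ref{111220242}), and both derive $D\subseteq\varphi(D)$ from Lemma~\ref{L43res}. The only cosmetic difference is that you phrase the primeness step as a contradiction while the paper argues directly from irreducibility that $D=D_1$ or $D=D_2$.
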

\begin{proof} It is well known that every deductive system is a filter.  Let us prove that in this case $D$ is a prime filter.
	
	Suppose that $a\lor b\in D$ and let $D_1=D(D,a)=\{x\in N:a\to x \in D\}$, and $D_2=D(D,b)=\{x\in N:b\to x \in D\}$, so $D\subseteq D_1,D_2$ and therefore $D\subseteq D_1\cap D_2$.
	
	Let $z\in D_1\cap D_2$, then $a\to z\in D$ and $b\to z\in D$, therefore
	$(a\lor b)\to z{\stackrel{{\rm (N\ref{111220242})}}{=}}$ $(a\to z) \land (b\to z)\in D$ and since $a\lor b\in D$ it follows that $z\in D$. Hence $D=D_1\cap D_2$ and since $D\in{\cal I}(N)$, $D=D_1$ or $D=D_2$.  It follows that  $a\in D$ or $b\in D$, so $D\in X(N)$. Since $D$ is proper, then by Lemma \ref{L43res}, $D\cap\sim D=\emptyset$ then $D\subseteq \varphi(D)$. Indeed, if $d\in D$, then $\sim d\notin D$, so $d\in\varphi(D)$.
\end{proof}

\begin{lemma} {\rm (A. Monteiro, \cite{AM80})}
	\label{L49}
	If $N$ is a Nelson algebra, and $P\in X(N)$ verifies $P\subseteq\varphi(P)$ then $P\in{\cal I}(N)$.
\end{lemma}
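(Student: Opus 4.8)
The plan is to verify the two properties that separate an irreducible deductive system from a mere prime filter: first that $P$ is closed under modus ponens, so that $P\in{\cal D}(N)$, and then that this deductive system is irreducible. I would begin by translating the hypothesis into a workable form. By definition $\varphi(P)=\complement\sim P$, so $d\in\varphi(P)$ holds exactly when $\sim d\notin P$; hence the assumption $P\subseteq\varphi(P)$ is equivalent to saying that $\sim p\notin P$ for every $p\in P$, that is, $P\cap\sim P=\emptyset$. This is the same consistency condition that Lemma \ref{L43res} attaches to proper deductive systems, and it is exactly what makes the converse direction work.

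For the first part I would check (D\ref{defds2}): assume $x\in P$ and $x\to y\in P$, and derive $y\in P$. Since $P$ is a filter, $x\land(x\to y)\in P$, and by (N\ref{nelsonax8}) this element equals $x\land(\sim x\lor y)$, which lies below $\sim x\lor y$; upward closure then gives $\sim x\lor y\in P$. Because $P$ is prime, either $\sim x\in P$ or $y\in P$. The first alternative would put both $x$ and $\sim x$ in $P$, so $\sim x\in P\cap\sim P$, contradicting the reformulated hypothesis. Therefore $y\in P$, and together with $1\in P$ this shows $P$ is a deductive system; it is proper because prime filters are proper.

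For irreducibility, suppose $P=D_1\cap D_2$ with $D_1,D_2\in{\cal D}(N)$, and assume for contradiction that $P\ne D_1$ and $P\ne D_2$. Choosing $a\in D_1\setminus P$ and $b\in D_2\setminus P$, the join $a\lor b$ lies above both $a$ and $b$, so $a\lor b\in D_1$ and $a\lor b\in D_2$, whence $a\lor b\in D_1\cap D_2=P$. Primeness of $P$ then forces $a\in P$ or $b\in P$, a contradiction; hence $P=D_1$ or $P=D_2$, and $P\in{\cal I}(N)$.

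The step I expect to carry the real weight is the verification that $P$ is closed under modus ponens, since this is precisely where the hypothesis $P\subseteq\varphi(P)$ is indispensable: without excluding the case $\sim x\in P$, a prime filter need not be a deductive system in a Nelson algebra. The irreducibility argument, by contrast, is a routine consequence of primeness and uses nothing beyond the lattice structure.
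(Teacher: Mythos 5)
Your proof is correct and follows essentially the same route as the paper's: (N\ref{nelsonax8}) plus primeness to establish closure under modus ponens, with the hypothesis $P\subseteq\varphi(P)$ ruling out $\sim x\in P$, and the standard join argument for irreducibility. The only cosmetic difference is that you first restate $P\subseteq\varphi(P)$ as $P\cap\sim P=\emptyset$, where the paper derives the same contradiction directly from the definition of $\varphi$.
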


\begin{proof} Since $P$ is a prime filter, it verifies (D\ref{defds1}). Let us prove that it verifies (D\ref{defds2}). If $x,x\to y \in P$, since $P$ is a filter, $x \land(\sim x\lor y) {\stackrel{{\rm (N8)}}{=}}x\land(x\to y)\in P$. Then  $\sim x\lor y\in P$, and since $P\in X(N)$ we have that $\sim x\in P$ or $y \in P$. If $\sim x \in P$ since $P\subseteq\varphi (P)$ then $\sim x\in\varphi(P)$, so $x\in\complement P$, a contradiction. Hence $y\in P$, so $P\in {\cal D}(N).$ Since $P\in X(N)$ then $P$ is proper. Suppose that $P=D_1\cap D_2$ with $D_1,D_2\in {\cal D}(N)$ and that $D_1\ne P$, $D_2\ne P$, then $P\subset D_1$ and $P\subset D_2$. Let $a\in D_1\setminus P$ and $b\in D_2\setminus P$. From this it follows that $a\lor b\in D_1\cap D_2=P$ and since $P\in X(N)$, then $a\in P$ or $b\in P$, contradiction.
\end{proof}

\begin{corollary}\label{equivalenciasP1}
	In a Nelson algebra $N$, given a subset $P\subseteq N$, 
	\[  \mathcal{I}(N)=\set{P\in X(N)|P\subseteq\varphi(P)}. \]
\end{corollary}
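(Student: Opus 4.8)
The plan is to obtain the asserted set equality by establishing the two inclusions separately, each of which is supplied directly by one of the two immediately preceding lemmas, so that no fresh computation is needed; the corollary is essentially a repackaging of Lemmas \ref{L410} and \ref{L49}.

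First I would show the inclusion $\mathcal{I}(N) \subseteq \{P \in X(N) \mid P \subseteq \varphi(P)\}$. Take any $D \in \mathcal{I}(N)$. Lemma \ref{L410} tells us that such an irreducible deductive system is in fact a prime filter, so $D \in X(N)$, and that it satisfies $D \subseteq \varphi(D)$. Therefore $D$ belongs to the right-hand set, which is exactly the first inclusion.

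For the reverse inclusion $\{P \in X(N) \mid P \subseteq \varphi(P)\} \subseteq \mathcal{I}(N)$, I would take any $P \in X(N)$ with $P \subseteq \varphi(P)$. Lemma \ref{L49} asserts precisely that such a prime filter is an irreducible deductive system, i.e. $P \in \mathcal{I}(N)$. Combining the two inclusions yields the claimed equality.

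There is no genuine obstacle at the level of the corollary itself: all the substantive work already resides in the two cited lemmas, where the Birula--Rasiowa condition $P\subseteq\varphi(P)$ is what guarantees the validity of the modus-ponens closure (D\ref{defds2}) and where primeness is what forces irreducibility. The only point worth a word is notational: the bound variable $P$ appearing in the displayed set-builder should be read as ranging over all prime filters of $N$, so that the right-hand side is understood as $\{Q \in X(N) : Q \subseteq \varphi(Q)\}$, and the claimed identity is then exactly the conjunction of Lemmas \ref{L410} and \ref{L49}.
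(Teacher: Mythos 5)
Your proof is correct and follows exactly the route the paper intends: the corollary is stated without proof immediately after Lemmas \ref{L410} and \ref{L49} precisely because it is their conjunction, with one lemma supplying each inclusion. Nothing is missing.
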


\begin{theorem} {\rm (H. Rasiowa, \cite{rasiowa74algebraic})}
	\label{THR}  \label{CIdeductivesystem}
	If  $N$ is a Nelson algebra, $D\in{\cal D}(N)$, $D\neq N$, and $a\notin D$, then there exists $D_a\in{\cal CI}(N)$ such that $D\subseteq D_a$ and $a\notin  D_a$.  {\rm (see also   I.Viglizzo, \cite{viglizzo99algebras}, page 40)} 
\end{theorem}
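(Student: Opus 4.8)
The plan is a standard maximal-element argument via Zorn's Lemma: first produce a deductive system that is bounded to $a$ (in the sense of item (4) of the definition of deductive systems), and then observe that every such bounded deductive system is automatically completely irreducible. Note at the outset that since $1\in D$ by (D\ref{defds1}) and $a\notin D$, we have $a\neq 1$, so ``bounded to $a$'' is legitimate.

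First I would consider the family $\mathcal{F}=\{E\in\mathcal{D}(N): D\subseteq E \text{ and } a\notin E\}$, partially ordered by inclusion. It is nonempty because $D\in\mathcal{F}$, using the hypothesis $a\notin D$. To apply Zorn's Lemma I would verify that every chain $\mathcal{C}\subseteq\mathcal{F}$ has an upper bound in $\mathcal{F}$, the natural candidate being $U=\bigcup\mathcal{C}$. Plainly $D\subseteq U$ and $a\notin U$, so the only point requiring care is that $U$ is again a deductive system: $1\in U$ is immediate, and if $x,x\to y\in U$ then, because $\mathcal{C}$ is a chain, both $x$ and $x\to y$ already lie in a single member $E\in\mathcal{C}$, whence $y\in E\subseteq U$ by (D\ref{defds2}). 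Thus $U\in\mathcal{F}$ is an upper bound for $\mathcal{C}$, and Zorn's Lemma yields a maximal element $D_a\in\mathcal{F}$, which is precisely a deductive system bounded to $a$ with $D\subseteq D_a$ and $a\notin D_a$.

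It then remains to show $D_a\in\mathcal{CI}(N)$. Since $a\notin D_a$ we have $D_a\neq N$, so $D_a$ is proper. Suppose $D_a=\bigcap_{i\in I}D_i$ with $D_i\in\mathcal{D}(N)$ for every $i\in I$. Each $D_i$ satisfies $D\subseteq D_a\subseteq D_i$. Because $a\notin D_a=\bigcap_{i\in I}D_i$, there must exist an index $i_0\in I$ with $a\notin D_{i_0}$; for this index $D_{i_0}\in\mathcal{F}$ and $D_a\subseteq D_{i_0}$, so the maximality of $D_a$ forces $D_a=D_{i_0}$. Hence $D_a$ is completely irreducible, and by construction $D\subseteq D_a$ and $a\notin D_a$, as required.

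The argument is essentially routine; the one step that genuinely demands attention is checking that the union over a chain is closed under modus ponens, which is exactly where the chain hypothesis is used, so I do not anticipate a serious obstacle. The only subtlety worth flagging is that the definition of complete irreducibility permits the $D_i$ to equal $N$, but the argument above handles that case without any modification, since the maximality comparison only ever uses the particular index $i_0$ for which $a\notin D_{i_0}$.
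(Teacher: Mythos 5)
Your proposal is correct and follows essentially the same route as the paper: the same Zorn's Lemma argument on the family of deductive systems containing $D$ and omitting $a$, followed by the same observation that a maximal such system is completely irreducible. You merely spell out the chain-union verification that the paper leaves as ``easy to check'' and phrase the final step directly rather than by contradiction.
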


\begin{proof}
	By hypothesis, $D$ is a proper deductive system. Let 
	\[\mathcal{D}_a=\set{D'| D' \text{ is a deductive system}, D\subseteq D',\text{ and }a\notin D'}.\]
	It is easy to check that in $\mathcal{D}_a$ every chain has an upper bound, so by Zorn's Lemma, there exists a maximal element in $\mathcal{D}_a$, namely $D_a$. 
	
	Now we prove that $D_a$ is completely irreducible. If $D_a=\bigcap_{i\in I}D_i$ with $D_i\neq D_a$ for every $i\in I$, then $a\in D_i$ for all $i\in I$, so $a\in D_a$, a contradiction. 
\end{proof}

\begin{theorem} {\rm (A. Monteiro, \cite{monteiro62algebra})}
	\label{T41}
	A deductive system   $D$ is completely irreducible  if and only if  $D$ is bounded to some element $a$ {\rm (see also  A.~Monteiro, \cite{monteiro80heyting}, I. Viglizzo, \cite{viglizzo99algebras}, page 41)}.
\end{theorem}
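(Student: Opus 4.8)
The plan is to prove the two implications separately, with the forward direction ``completely irreducible implies bounded to some element'' being the one where complete irreducibility does the real work.

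First I would dispatch the easier backward direction. Suppose $D$ is bounded to some element $a\neq 1$, so $D$ is maximal among the deductive systems that do not contain $a$. Since $a\notin D$ and $a\neq 1$, $D$ is proper. To see complete irreducibility, write $D=\bigcap_{i\in I}D_i$ with each $D_i\in\mathcal{D}(N)$. Because $a\notin D$, there must be some index $i_0$ with $a\notin D_{i_0}$; now $D_{i_0}$ is a deductive system containing $D$ and omitting $a$, so maximality of $D$ among such systems forces $D=D_{i_0}$. Hence $D$ is completely irreducible.

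For the forward direction, suppose $D$ is completely irreducible (in particular proper). Consider the family $\mathcal{E}=\set{E\in\mathcal{D}(N): D\subsetneq E}$ of all deductive systems that strictly contain $D$. Since $D\neq N$ we have $N\in\mathcal{E}$, so $\mathcal{E}\neq\emptyset$, and clearly $D\subseteq\bigcap\mathcal{E}$. The crucial step is to observe that this inclusion must be strict: if we had $D=\bigcap_{E\in\mathcal{E}}E$, then, since every $E\in\mathcal{E}$ satisfies $E\neq D$, this representation would contradict complete irreducibility. Therefore $D\subsetneq\bigcap\mathcal{E}$, and I may choose an element $a\in\bigcap\mathcal{E}\setminus D$. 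Since $1\in D$, necessarily $a\neq 1$.

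Finally I would verify that $D$ is bounded to this $a$. By construction $a\notin D$, so $D$ belongs to the collection of deductive systems omitting $a$. If $D'$ is any deductive system with $D\subsetneq D'$, then $D'\in\mathcal{E}$, hence $a\in\bigcap\mathcal{E}\subseteq D'$; thus no deductive system strictly extending $D$ can omit $a$, which is exactly the statement that $D$ is maximal among the deductive systems not containing $a$. The main obstacle is this forward direction: the argument rests on applying the completely-irreducible condition to the (typically infinite) family $\mathcal{E}$ of all strict supersets of $D$, and one must be careful to check that $\mathcal{E}$ is nonempty and that the chosen witness $a$ is genuinely distinct from $1$.
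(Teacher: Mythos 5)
Your proof is correct. The backward direction is essentially identical to the paper's. The forward direction, however, takes a genuinely different route: the paper invokes its Theorem \ref{CIdeductivesystem} (the Zorn's-lemma existence result) to produce, for each $a\in N\setminus D$, a deductive system $D_a$ bounded to $a$ with $D\subseteq D_a$ and $a\notin D_a$; it then writes $D=\bigcap_{a\in N\setminus D}D_a$ and applies complete irreducibility to conclude $D=D_a$ for some $a$. You instead apply complete irreducibility directly to the family $\mathcal{E}$ of \emph{all} deductive systems strictly containing $D$, extract a witness $a\in\bigcap\mathcal{E}\setminus D$, and check boundedness by hand. Your argument is more self-contained (no appeal to the existence theorem, hence no hidden use of Zorn's lemma beyond what the definition already supplies) and the verification that $\mathcal{E}\neq\emptyset$ and $a\neq 1$ is carried out carefully. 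What the paper's version buys in exchange is that its decomposition $D=\bigcap_{a\in N\setminus D}D_a$ into \emph{bounded} (hence completely irreducible) deductive systems is reused immediately afterwards: Lemma \ref{L47} is stated as a consequence of that proof, so the paper's route does double duty, whereas yours proves Theorem \ref{T41} alone.
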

\begin{proof}
	Let $D$ be a completely irreducible deductive system. By definition, $D$ is a proper deductive system. For each element $a\in N\setminus D$, there exists by Theorem \ref{CIdeductivesystem}, a bounded deductive system $D_a$ containing $D$ and such that $a\notin D_a$. Thus $D=\bigcap_{a \in N\setminus D}D_a$, and since $D$ is completely irreducible, $D=D_a$ for some $a$. 
	
	Now assume $D_a$ is a deductive system bounded to the element $a$. If $D_a=\bigcap_{i\in I}D_i$, then for some $i\in I$, $a\notin D_i$. We also have that $D_a\subseteq D_i$, but by the definition of bounded deductive system, we must have $D_a=D_i$, so $D_a$ is completely irreducible.
\end{proof}
\begin{definition} {\rm (A.~Monteiro, \cite{AM62a})} A Nelson algebra $N$, is said to be {\emph{linear}} if for every $x,y\in N$,  \[(x\to y)\lor (y\to x)=1\] holds.
\end{definition}
	\begin{lemma} \label{LAL}
	If $N$ is a five-valued Nelson algebra, then $N$ is a linear Nelson algebra. 
\end{lemma}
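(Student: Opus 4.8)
The plan is to deduce linearity directly from condition \textbf{(FN1)} of Theorem \ref{equivalenciasFN}. Since $N$ is five-valued it satisfies \textbf{(NT$_3$)}, and hence, by that theorem, every one of the equivalent conditions listed there; in particular the identity $x\vee\lceil y\vee(x\to y)=1$ holds for all $x,y\in N$. My goal is to show that each of the three joinands on the left lies below $(x\to y)\vee(y\to x)$, so that this element is forced to equal $1$.

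First I would dispose of the trivial bound $x\to y\le (x\to y)\vee(y\to x)$. Next, applying (N\ref{111220245}) with the roles of $x$ and $y$ interchanged gives $x\le y\to x$, whence $x\le(x\to y)\vee(y\to x)$. Finally, applying (N\ref{N19}) in the form $\lceil y\le y\to x$ yields $\lceil y\le(x\to y)\vee(y\to x)$. Taking the join of these three inequalities, and using that the resulting left-hand side is exactly the term appearing in \textbf{(FN1)}, I obtain $1=x\vee\lceil y\vee(x\to y)\le(x\to y)\vee(y\to x)$, so $(x\to y)\vee(y\to x)=1$, which is precisely linearity.

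The only real content lies in having \textbf{(FN1)} available, which is already guaranteed by Theorem \ref{equivalenciasFN}; after that the argument is a short monotonicity computation, so I do not anticipate a genuine obstacle. The one place demanding care is the bookkeeping of the variable substitutions in (N\ref{111220245}) and (N\ref{N19}): each of these calculation rules must be instantiated with $x$ and $y$ swapped, so that the resulting upper bound is $y\to x$ rather than $x\to y$, matching the second summand of the target disjunction. An alternative route, should the direct bounding feel too slick, would be to start instead from \textbf{(FN2)} or \textbf{(FN5)} and massage the implicational expressions, but this would be strictly more laborious and I would fall back on the \textbf{(FN1)} argument above.
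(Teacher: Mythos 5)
Your proof is correct and follows essentially the same route as the paper's: both deduce linearity from \textbf{(FN1)} together with the calculation rules (N\ref{N19}) and (N\ref{111220245}). The only cosmetic difference is that the paper joins two instances of \textbf{(FN1)} and then absorbs terms, whereas you bound the three joinands of a single instance by $(x\to y)\vee(y\to x)$, which is if anything slightly cleaner.
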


\begin{proof} By Theorem \ref{equivalenciasFN}, $1=1 \vee 1 {{\rm\stackrel{(FN1)}{=}}} y\vee\lceil x \vee (y\to x)\vee x\vee \lceil y \vee (x\to y)=$ \linebreak[4] $\lceil y \vee  x \vee (y\to x)\vee \lceil x \vee y \vee (x\to y){\stackrel{{\rm(N\ref{111220245})}}{=}}$	$\lceil y \vee (y\to x)\vee \lceil x  \vee (x\to y){{\rm\stackrel{(N\ref{N19})}{=}}}$ \linebreak[4] $(y\to x)\vee(y\to x).$
\end{proof}

If $N$ is a Nelson algebra and $I\in {\cal I}(N)$, let ${\cal S}(I)=\{D\in D(N): D\neq N, I\subseteq D\}$. Since $I\in {\cal S}(I)$, then ${\cal S}(I)\neq \emptyset$. Let ${\bf O}(I)$ be the poset $({\cal S}(I),\subseteq)$. 

\begin{lemma}  
	{\rm (A. Monteiro, \cite{monteiro62algebra})}
	\label{L45N}
	If $N$ is a linear Nelson algebra and $I\in {\cal I}(N)$, then ${\bf O}(I)$ is a chain. 
\end{lemma}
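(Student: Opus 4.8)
The plan is to show directly that any two elements of $\mathcal{S}(I)$ are $\subseteq$-comparable, which is exactly the assertion that $\mathbf{O}(I)$ is a chain. So I would fix two proper deductive systems $D_1,D_2\in\mathcal{S}(I)$ and argue by contradiction: if they were incomparable, I could choose witnesses $a\in D_1\setminus D_2$ and $b\in D_2\setminus D_1$, and the goal is to derive a contradiction from these.

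The crucial observation is that although $D_1$ and $D_2$ need not be prime, the \emph{base} deductive system $I$ is. Indeed, since $I\in\mathcal{I}(N)$, Lemma \ref{L410} gives $I\in X(N)$, so $I$ is a prime filter. Now I would invoke the linearity hypothesis on the two witnesses to get $(a\to b)\lor(b\to a)=1$, and since $1\in I$, primeness of $I$ forces either $a\to b\in I$ or $b\to a\in I$. Because $I\subseteq D_1$ and $I\subseteq D_2$, in the first case $a\to b\in D_1$ together with $a\in D_1$ yields $b\in D_1$ by the modus-ponens closure (D\ref{defds2}), contradicting $b\notin D_1$; symmetrically, in the second case $b\to a\in D_2$ and $b\in D_2$ give $a\in D_2$, contradicting $a\notin D_2$. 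Either way we reach a contradiction, so $D_1\subseteq D_2$ or $D_2\subseteq D_1$.

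The step I expect to be the genuine content, rather than routine, is precisely the decision of \emph{which} primeness to use: one is tempted to try to exploit primeness of the $D_i$, but these are arbitrary proper deductive systems and need not be prime. The whole argument hinges on channeling linearity through the prime filter $I$ that sits inside both $D_1$ and $D_2$, and on noting that modus ponens relative to $I\subseteq D_i$ is enough to propagate membership. Once that is seen, the remaining manipulations are immediate applications of the deductive-system axioms and the linearity identity, with no further calculation required.
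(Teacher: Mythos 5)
Your argument is correct and is essentially identical to the paper's proof: both fix incomparable $D_1,D_2\in\mathcal{S}(I)$ with witnesses $a\in D_1\setminus D_2$, $b\in D_2\setminus D_1$, apply linearity to get $(a\to b)\lor(b\to a)=1$, use primeness of $I$ (via Lemma \ref{L410}) to place one of the implications in $I\subseteq D_i$, and close with modus ponens to reach a contradiction. Nothing to add.
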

\begin{proof} 
	Suppose that ${\bf O}(I)$  is not a chain, so there exist $D_1,D_2\in {\cal S}(I)$, $D_1\neq D_2$ such that $D_1\not\subseteq D_2$ and  $D_2 \not\subseteq D_1$. Therefore, there exist $a,b\in N$ such that 	(1) $a \in D_1\setminus D_2$ and (2) $b \in D_2\setminus D_1$. Since $N$ is linear, 
	$(a\to b)\lor (b \to a)=1$. As $1\in I$, and by Lemma \ref{L410} $I$ is a prime filter,  (3) $a\to b\in I\subset D_1$ or (4) $b\to a \in I\subset D_2$.
	From (1) and (3) it follows that $b\in D_1$, contradicting (2). Similarly, from  (2) and (4) it follows that  $a\in D_2$, contradicting (1).
\end{proof}

\begin{corollary}  
	\label{C45N} {\rm (D. Brignole, \cite{brignole65nelson}, Theorem 3, pag. 3)} 
	If $N$ is a five-valued Nelson algebra and $I\in {\cal I}(N)$, ${\bf O}(I)$ is a chain. 
\end{corollary}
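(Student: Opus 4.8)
The plan is to obtain this statement as an immediate consequence of the two lemmas that directly precede it, by transitivity through the notion of linearity. The key observation is that the class of five-valued Nelson algebras sits inside the class of linear Nelson algebras, and the chain property of ${\bf O}(I)$ has already been established for the larger (linear) class; so no new argument about deductive systems is actually required here.

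Concretely, first I would invoke Lemma \ref{LAL}, which asserts that every five-valued Nelson algebra is a linear Nelson algebra. Since $N$ is assumed to be five-valued, this yields that $(x\to y)\lor(y\to x)=1$ holds for all $x,y\in N$, i.e.\ $N$ is linear. With linearity in hand, I would then apply Lemma \ref{L45N} to the very same irreducible deductive system $I\in{\cal I}(N)$ furnished by the hypothesis: that lemma states precisely that, for a linear Nelson algebra and any $I\in{\cal I}(N)$, the poset ${\bf O}(I)=({\cal S}(I),\subseteq)$ is a chain. Composing these two implications gives the desired conclusion directly.

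There is essentially no obstacle to surmount at this level, because the substantive work has already been discharged upstream. The genuinely nontrivial steps are hidden in the lemmas being cited: the passage from five-valued to linear in Lemma \ref{LAL} rests on characterization (FN1) of Theorem \ref{equivalenciasFN}, and the chain property in Lemma \ref{L45N} relies on the fact, recorded in Lemma \ref{L410}, that an irreducible deductive system is a prime filter, which is what lets one split the unit $(a\to b)\lor(b\to a)=1$ inside $I$. Once those are granted, the corollary is a one-line specialization of the linear case to the five-valued case, and I would present it as such.
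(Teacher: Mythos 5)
Your proposal is correct and matches the paper's own proof exactly: the paper derives Corollary \ref{C45N} immediately from Lemma \ref{LAL} (five-valued implies linear) combined with Lemma \ref{L45N} (for linear algebras, ${\bf O}(I)$ is a chain). Nothing further is needed.
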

\begin{proof} It follows immediately from lemmas \ref{LAL} and \ref{L45N}.
\end{proof}

	\begin{lemma} {\rm (A. Monteiro, \cite{monteiro62algebra})}
	\label{L47}
	Every proper deductive system of a Nelson algebra can be written as the  intersection of completely irreducible deductive systems.
\end{lemma}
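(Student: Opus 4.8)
The plan is to build the required family of completely irreducible deductive systems index by index, using the separation theorem already established. Let $D$ be a proper deductive system of $N$. Since $D$ is proper, the complement $N\setminus D$ is nonempty, and for each $a\in N\setminus D$ I will invoke Theorem \ref{CIdeductivesystem}: because $D\neq N$ and $a\notin D$, there exists a completely irreducible deductive system $D_a\in\mathcal{CI}(N)$ with $D\subseteq D_a$ and $a\notin D_a$. This is the only nontrivial input, and it is exactly the Zorn's-Lemma separation result proved above.

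Having fixed one such $D_a$ for every $a\in N\setminus D$, I would set $E=\bigcap_{a\in N\setminus D}D_a$ and prove $D=E$ by a double inclusion. For $D\subseteq E$, note that each $D_a$ contains $D$ by construction, so their intersection does as well. For $E\subseteq D$, I argue by contraposition: if $x\notin D$, then $x$ itself is one of the indices $a$, and by the defining property of $D_x$ we have $x\notin D_x$; hence $x\notin E$. This yields $E\subseteq D$ and completes the equality.

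Since each $D_a$ belongs to $\mathcal{CI}(N)$, the equality $D=\bigcap_{a\in N\setminus D}D_a$ exhibits $D$ as an intersection of completely irreducible deductive systems, as required. The argument parallels the one used in the proof of Theorem \ref{T41}, where a completely irreducible system was written as an intersection of bounded (equivalently, completely irreducible) systems.

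The one point that genuinely needs care, rather than any deep obstacle, is that the indexing family must be nonempty so that the intersection is well defined and does not default to all of $N$; this is guaranteed precisely by the hypothesis that $D$ is proper. Beyond that, the whole difficulty is absorbed into Theorem \ref{CIdeductivesystem}, so I expect no serious technical hurdle in the remaining bookkeeping.
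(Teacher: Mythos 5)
Your proof is correct and is essentially the argument the paper intends: it writes a proper $D$ as $\bigcap_{a\in N\setminus D}D_a$ with each $D_a$ obtained from Theorem \ref{CIdeductivesystem}, exactly as in the proof of Theorem \ref{T41}, to which the paper's one-line proof of this lemma defers. The double-inclusion check and the remark that properness of $D$ guarantees a nonempty index set are both accurate.
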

\begin{proof}
	Is a consequence of the proof of \ref{T41}.
\end{proof}

\begin{lemma}
	\label{CT42} If $D$ is a deductive system bounded to $d$ then $x\to d \in D$ for every $x\notin D$.
	{\rm (see  A. Monteiro, \cite{monteiro80heyting}, I.Viglizzo, \cite{viglizzo99algebras}, page 42, Corollary 2.4)}
\end{lemma}
\begin{proof}
	Consider the deductive system generated by $D$ and $x$ for any $x\notin D$. By Lemma \ref{L42} this is $D(D,x)=\set{z\in N:x\to z\in D}$. Given that $x\notin D$, $D$ is properly contained in $D(D,x)$, and since $D$ is bounded to $d$, $d\in D(D,x)$, so $x\to d\in D$.
\end{proof}

\begin{theorem} {\rm (D. Brignole, \cite{brignole65nelson}, Theorem 1, pag. 2)}
	\label{T43}
	If $N$ is a five-valued Nelson algebra,  $C\in {\cal CI}(N)$, $D\in {\cal D}(N)$ is proper and  $C\subset D$, then $D\in {\cal M}(N)$. 
\end{theorem}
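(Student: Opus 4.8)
The plan is to argue by contradiction and reduce the whole statement to a single well-chosen instance of the five-valued identity in its form \textbf{(FN1)}. I would first isolate the only two features of the hypotheses that the argument actually uses: that $C$ is a prime filter, and that $C$ is \emph{strictly} contained in $D$. The first holds because $C\in\mathcal{CI}(N)\subseteq\mathcal{I}(N)$ by Remark \ref{Rnuev}, and every irreducible deductive system is a prime filter by Lemma \ref{L410}. (It is worth noting in advance that neither the boundedness of $C$ from Theorem \ref{T41} nor the chain structure of ${\bf O}(C)$ from Corollary \ref{C45N} will be needed.)

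Assume $D$ is not maximal. Then there is a proper deductive system $D'$ with $D\subsetneq D'$. Since $C\subsetneq D$ I would pick $a\in D\setminus C$, and since $D\subsetneq D'$ I would pick $b\in D'\setminus D$. Two quick observations set up the contradiction: $a\notin C$ by choice; and $\gen b\notin C$, because $b\in D'$ with $D'$ proper forces $\gen b=b\to 0\notin D'$ (otherwise $b$ and $b\to 0$ in $D'$ would yield $0\in D'$ by (D\ref{defds2}), making $D'=N$ via (F\ref{F2})), and $C\subseteq D\subseteq D'$.

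Next I would invoke the five-valued hypothesis in the equivalent form \textbf{(FN1)} from Theorem \ref{equivalenciasFN}, applied with $x:=a$ and $y:=b$:
\[ a\lor\gen b\lor(a\to b)=1\in C. \]
Since $C$ is prime and neither $a$ nor $\gen b$ lies in $C$, the remaining disjunct must: $a\to b\in C\subseteq D$. But then $a\in D$ and $a\to b\in D$ give $b\in D$ by (D\ref{defds2}), contradicting $b\notin D$. Hence no proper $D'$ can strictly contain $D$, so $D$ is maximal.

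The substance of the proof is entirely in spotting that \textbf{(FN1)} applied to the pair $(a,b)$ collapses, through primeness of $C$, to $a\to b\in C$; everything else is routine manipulation with prime filters and modus ponens. This is precisely where five-valuedness is indispensable: \textbf{(FN1)} is equivalent to \textbf{(NT$_3$)} and fails, for instance, in $C_6$, which matches the fact that the conclusion itself breaks down there (the chain of deductive systems above $\{1\}$ in $C_6$ has length three, so the deductive system strictly above a completely irreducible one need not be maximal).
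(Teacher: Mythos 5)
Your proof is correct, but it takes a genuinely different route from the paper's. The paper exploits \emph{complete} irreducibility: by Theorem \ref{T41} the system $C$ is bounded to some $b\neq 1$, Lemma \ref{CT42} then places $(b\to a)\to b$ and $(a\to c)\to b$ in $C$ for suitable $a,c$, and a direct application of {\rm(NT$_3$)} inside $C$ yields the contradiction. You instead use only that $C$ is \emph{irreducible}, hence a prime filter containing $1$ (Remark \ref{Rnuev} and Lemma \ref{L410}), and let a single instance of {\rm(FN1)} from Theorem \ref{equivalenciasFN}, namely $a\lor\gen b\lor(a\to b)=1\in C$, collapse by primeness to $a\to b\in C\subseteq D$, after which modus ponens finishes the job. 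Your argument is shorter, dispenses entirely with the boundedness machinery of Theorems \ref{THR}--\ref{T41} and Lemma \ref{CT42}, and in fact establishes the formally stronger statement with ${\cal I}(N)$ in place of ${\cal CI}(N)$ — a strengthening the paper only recovers afterwards via Lemma \ref{L48} and Corollary \ref{C48a}, whose proofs themselves lean on Theorem \ref{T43}; there is no circularity in your version since Theorem \ref{equivalenciasFN} and Lemma \ref{L410} are independent of this material. What the paper's route buys in exchange is that it argues directly from the defining identity {\rm(NT$_3$)} without invoking the equational equivalences, which keeps it closer to Brignole's original 1965 argument reproduced in the Appendix. Your closing observation about $C_6$ is also accurate: there $\{1\}\subset[\tfrac{4}{5})\subset[\tfrac{3}{5})$ is a three-element chain of proper deductive systems, so the conclusion genuinely requires five-valuedness.
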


\begin{proof} Since $C\in {\cal CI}(N)$ then by Theorem \ref{T41}, $C$ is bounded to some $b\neq 1$. As $C\subset D$ then $b\in D$. Assume there exists $M\in{\cal D}(N)$ such that $D\subset M\subseteq N$.
	Let $a\in M\setminus D$. If $b\to a\in D$, since  $b\in D$, then $a \in D$, a contradiction. Then
	$b\to a\notin D$
	and therefore
	$b\to a\notin C,$ so by  Lemma \ref{CT42}, 
	\begin{equation} \label{201124_T43_02} 
		(b\to a)\to b \in C.	
	\end{equation}	
	If 	$((b\to a)\to b)\to b \in C$ then by (\ref{201124_T43_02}) it follows that $b\in C$, a contradiction. Then
	\begin{equation} \label{201124_T43_03} 
		((b\to a)\to b)\to b \notin C.	
	\end{equation}	
	If there exists some $c\in N\setminus M$ then 
	\begin{equation} \label{201124_T43_04} 
		a\to c \notin M,	
	\end{equation}
	for otherwise, if $a\to c \in M$,  since $a\in M$, it would follow that  $c \in M$.
	From (\ref{201124_T43_04}) it follows that $a\to c \notin C$, so by  Lemma \ref{CT42}, 
	\begin{equation} \label{201124_T43_06} 
		(a\to c)\to b \in C.	
	\end{equation}
	Since  $N$ is a five-valued algebra  then
	$$ ((a\to c)\to b)\to(((b \to a)\to b)\to b)=1\in C$$
	so by (\ref{201124_T43_06}), $((b \to a)\to b)\to b \in C$, which contradicts (\ref{201124_T43_03}). Therefore, $M=N,$ and $D$ is maximal.
\end{proof}

	\begin{corollary} {\rm (D. Brignole, \cite{brignole65nelson}, Corollary 1, pag. 2)}
	\label{C42} If  $N$ is a five-valued Nelson algebra and $C\in {\cal CI}(N)$ is not maximal, then there  exists a unique maximal deductive system that properly contains $C$.
\end{corollary}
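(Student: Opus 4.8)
The plan is to separate the statement into existence and uniqueness, obtaining existence directly from Theorem \ref{T43} and uniqueness from the chain structure of ${\bf O}(C)$ established in Corollary \ref{C45N}.

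For existence, I would first note that $C$, being completely irreducible, is in particular proper. Since $C$ is assumed not to be maximal, there must be a proper deductive system $D$ with $C\subsetneq D$. Applying Theorem \ref{T43} to the pair $C\subset D$ shows at once that $D\in {\cal M}(N)$, so a maximal deductive system properly containing $C$ indeed exists.

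For uniqueness, suppose $M_1,M_2\in {\cal M}(N)$ each properly contain $C$. Being maximal they are proper, and they contain $C$, so both belong to ${\cal S}(C)$; that is, they are elements of the poset ${\bf O}(C)$. Since $C\in {\cal CI}(N)\subseteq {\cal I}(N)$ by Observation \ref{Rnuev}, Corollary \ref{C45N} applies and ${\bf O}(C)$ is a chain, whence $M_1$ and $M_2$ are comparable, say $M_1\subseteq M_2$. Because $M_1$ is maximal and $M_2$ is proper, this inclusion cannot be strict, and therefore $M_1=M_2$.

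The proof is short precisely because the substantive work has already been carried out: Theorem \ref{T43} encapsulates the force of five-valuedness, while Corollary \ref{C45N} encapsulates linearity. The only steps requiring attention are verifying that the candidate maximal systems genuinely lie in ${\cal S}(C)$, so that the chain property is applicable, and observing that comparability between two maximal proper deductive systems forces their equality. Thus the main obstacle is conceptual rather than computational: recognizing that no structural fact beyond Theorem \ref{T43} and Corollary \ref{C45N} is needed.
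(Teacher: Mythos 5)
Your proof is correct and follows essentially the same route as the paper: existence from Theorem \ref{T43} applied to any proper deductive system properly containing $C$, and uniqueness from Observation \ref{Rnuev} together with the chain property of ${\bf O}(C)$ given by Corollary \ref{C45N}, with maximality forcing equality of comparable maximal systems.
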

\begin{proof} Since $C$ is not maximal, there exists a proper deductive system $D$ such that   $C\subset D$. By Theorem \ref{T43}, $D$ is maximal.
	
	Now assume there exist   $D_1,D_2 \in {\cal M}(N)$,   such that $C\subset D_1,D_2$. By the observation \ref{Rnuev}, $C\in {\cal I}(N)$  so by Corollary \ref{C45N}, $D_1\subseteq D_2$	 or $D_2\subseteq D_1$ and therefore by  maximality $D_1=D_2.$
\end{proof}

	\begin{lemma} {\rm  (D. Brignole, \cite{brignole65nelson}, Theorem 4, pag. 3)}
	\label{L48} If $N$ is a five-valued Nelson algebra then ${\cal I}(N) \subseteq {\cal CI}(N)$.
\end{lemma}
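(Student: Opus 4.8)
The inclusion $\mathcal{CI}(N)\subseteq\mathcal{I}(N)$ is already recorded in Observation \ref{Rnuev}, so the entire content of the lemma is the reverse inclusion, which is where the five-valued hypothesis must enter. My plan is to take an arbitrary $I\in\mathcal{I}(N)$ and exhibit a completely irreducible deductive system that is \emph{literally equal} to $I$. Since membership in $\mathcal{CI}(N)$ is a property of the underlying set, producing such an equality immediately yields $I\in\mathcal{CI}(N)$, and I never have to inspect every possible decomposition of $I$ by hand.

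First I would invoke Lemma \ref{L47} to write $I=\bigcap_{k\in K}C_k$ with each $C_k\in\mathcal{CI}(N)$ and $K\neq\emptyset$ (the index set is nonempty because $I$ is proper, so the intersection cannot be the improper system $N$). Each $C_k$ is then a proper deductive system containing $I$, hence $C_k\in\mathcal{S}(I)$; and by Corollary \ref{C45N} the poset $\mathbf{O}(I)$ is a chain, so the family $\{C_k\}_{k\in K}$ is a chain of completely irreducible deductive systems.

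The crucial step is to bound the length of this chain by feeding the five-valued hypothesis through Theorem \ref{T43}. If $C_a\subsetneq C_b$ are two distinct members of the chain, then applying Theorem \ref{T43} with the completely irreducible system $C_a$ and the proper deductive system $C_b$ gives $C_b\in\mathcal{M}(N)$. Thus every member of the chain that is not its least element is maximal. But a chain contains at most one maximal deductive system, since two comparable maximal systems must coincide. Consequently the chain cannot have three distinct members $C_a\subsetneq C_b\subsetneq C_c$, for then both $C_b$ and $C_c$ would be maximal, contradicting $C_b\subsetneq C_c$. Hence $\{C_k\}_{k\in K}$ has at most two elements and, in particular, a least element $C_{\min}$.

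It follows that $I=\bigcap_{k\in K}C_k=C_{\min}$, and since $C_{\min}\in\mathcal{CI}(N)$ we conclude $I\in\mathcal{CI}(N)$, as desired. I expect the main obstacle to be precisely this chain bookkeeping: once Theorem \ref{T43} forces every non-minimal element of the chain to be maximal, the clean observation that a chain can accommodate at most two such systems is what collapses the intersection onto a single completely irreducible member. Everything else is a direct appeal to earlier results (Lemma \ref{L47} for the decomposition and Corollary \ref{C45N} for the chain condition), together with the harmless remark that $I\ne N$ guarantees $K\neq\emptyset$.
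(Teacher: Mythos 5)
Your proof is correct and follows essentially the same route as the paper: decompose $I$ via Lemma \ref{L47}, use Corollary \ref{C45N} to get a chain, and use Theorem \ref{T43} to cap the chain at two elements. The only (harmless) difference is at the very end, where the paper invokes the irreducibility of $I$ to split $I=C_1\cap C_2$, while you observe directly that the intersection of a finite chain is its least element.
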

\begin{proof} Let  $I\in {\cal I}(N)$, so $I$ is proper and by  Lemma \ref{L47}, $I=\bigcap\limits_{j\in J} C_j$, with $C_j\in {\cal CI}(N)$, for every $j\in J$.
	Then $I\subseteq C_j$ for every $j\in J$. By Corollary \ref{C45N}, ${\cal K}=\{C_j\}_{j\in J}$  is a chain. This chain has at most two elements: if $C_1,C_2,C_3$ are three different elements in ${\cal K}$  such that $C_1\subset C_2\subset C_3$, then the completely irreducible deductive system  $C_1$ would be properly contained in two proper deductive systems, which contradicts Theorem \ref{T43}. Then ${\cal K}$ has at most two elements, $C_1,C_2$ and since  $I=C_1\cap C_2$, by hypothesis $I=C_1$ or $I=C_2$ and therefore $I\in {\cal CI}(N)$.
\end{proof}

\begin{corollary}
	\label{C48a}  If $N$ is a five-valued Nelson algebra then ${\cal I}(N)={\cal CI}(N)$.
\end{corollary}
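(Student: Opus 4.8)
The plan is to obtain the equality by combining two opposite inclusions, both of which are essentially already on hand. First I would record the inclusion ${\cal CI}(N)\subseteq{\cal I}(N)$, which holds purely from the definitions and requires no assumption on $N$: a completely irreducible deductive system $D$ is proper and cannot equal $\bigcap_{i\in I}D_i$ with all $D_i\ne D$, so in particular it cannot be written as $D_1\cap D_2$ with $D_1,D_2\ne D$; hence $D$ is irreducible. This is exactly the content of Remark \ref{Rnuev}.

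For the reverse inclusion ${\cal I}(N)\subseteq{\cal CI}(N)$ I would invoke Lemma \ref{L48} directly. Putting the two inclusions together yields ${\cal I}(N)={\cal CI}(N)$, which is the claim.

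I expect no genuine obstacle at the level of this corollary, since the substantive work is already carried out in Lemma \ref{L48}; the corollary is a one-line consequence. The real difficulty, already dispatched there, is promoting an \emph{irreducible} deductive system (a priori only indecomposable under binary intersections) to a \emph{completely irreducible} one. That step is where the five-valued hypothesis is used in an essential way, through Corollary \ref{C45N} (the poset ${\bf O}(I)$ of proper deductive systems containing an irreducible $I$ is a chain) together with Theorem \ref{T43} (which forces this chain to have at most two elements, so that any representation of $I$ as an intersection collapses to $I$ itself). Granting Lemma \ref{L48}, the corollary follows immediately by the two inclusions above.
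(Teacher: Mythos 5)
Your proof is correct and follows exactly the route the paper intends: the corollary is stated without a separate proof precisely because it is the conjunction of Observation \ref{Rnuev} (${\cal CI}(N)\subseteq{\cal I}(N)$, definitional) and Lemma \ref{L48} (${\cal I}(N)\subseteq{\cal CI}(N)$ under the five-valued hypothesis). Your accompanying remarks about where the five-valued hypothesis enters (via Corollary \ref{C45N} and Theorem \ref{T43} inside Lemma \ref{L48}) are also accurate.
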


\begin{theorem}{\rm (D. Brignole, \cite{brignole65nelson}, Theorem 2, pag. 2)}
	\label{T44}
	If $N$ is a  Nelson algebra in which every completely irreducible deductive system is maximal or there exists a unique proper deductive system containing it properly, then $N$ is five-valued.
\end{theorem}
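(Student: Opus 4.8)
The plan is to prove the equivalent condition \textbf{(FN1)} of Theorem~\ref{equivalenciasFN}, namely $x\vee\gen y\vee(x\to y)=1$ for all $x,y$, since this formulation interacts most directly with a hypothesis phrased in terms of deductive systems. The engine is the representation furnished by Theorem~\ref{CIdeductivesystem}: for nontrivial $N$ (the trivial case being immediate), applying that theorem to the proper deductive system $\{1\}$ shows that an element $a$ equals $1$ precisely when $a$ belongs to every $C\in{\cal CI}(N)$. Hence it suffices to fix an arbitrary $C\in{\cal CI}(N)$ and verify $x\vee\gen y\vee(x\to y)\in C$; and because a deductive system is upward closed, this follows as soon as one of the three joinands $x$, $\gen y$, $x\to y$ lies in $C$.

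So I would argue by contradiction, assuming $x\notin C$, $\gen y\notin C$ and $x\to y\notin C$, and exhibit a failure of the structural hypothesis. The main tool is Lemma~\ref{L42}, giving $D(C,a)=\{t\in N: a\to t\in C\}$, together with the elementary facts that $C\subseteq D(C,a)$, that $D(C,a)=C$ iff $a\in C$, and that $D(C,a)=N$ iff $\gen a=a\to 0\in C$. From $x\notin C$ we get $C\subsetneq D(C,x)$, and from $x\to y\notin C$ we get $y\notin D(C,x)$; in particular $D(C,x)$ is proper. Thus $C$ is a completely irreducible deductive system that is \emph{not} maximal, so the hypothesis yields a unique proper deductive system properly containing $C$. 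Since any proper deductive system strictly above it would also contain $C$ properly, uniqueness forces that system to be maximal; call it $M$, and observe $M=D(C,x)$, so that $x\in M$ while $y\notin M$.

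The crux is to produce a second, independent proper deductive system above $C$ and invoke uniqueness again. Here I would use the inequality $\gen(x\to y)\le\gen y$, which comes from $y\le x\to y$ (N\ref{111220245}) via antitonicity of $\to$ in its first argument (N\ref{111220246}). Combined with $\gen y\notin C$ and upward closure, this gives $\gen(x\to y)\notin C$, so $D(C,x\to y)$ is proper; and $x\to y\notin C$ gives $C\subsetneq D(C,x\to y)$. By the uniqueness just established, $D(C,x\to y)=M$, whence $x\to y\in M$. Now $x\in M$ and $x\to y\in M$ yield $y\in M$ by (D\ref{defds2}), contradicting $y\notin M$. This contradiction establishes \textbf{(FN1)}, and Theorem~\ref{equivalenciasFN} then delivers \textbf{(NT$_3$)}, i.e.\ that $N$ is five-valued.

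I expect the main obstacle to be the bookkeeping of properness: the hypothesis only constrains \emph{proper} deductive systems, so everything hinges on certifying that both $D(C,x)$ and $D(C,x\to y)$ are proper. Properness of the first is automatic from $y\notin D(C,x)$, but properness of the second is exactly where the otherwise-unused assumption $\gen y\notin C$ is consumed, through $\gen(x\to y)\le\gen y$. Pinning down that inequality, together with the ``unique proper extension implies maximal'' step, is the delicate heart of the argument.
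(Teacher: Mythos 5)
Your proof is correct, but it follows a genuinely different route from the paper's. The paper negates (NT$_3$) directly: from a failing instance with elements $a,b,c$ it forms the deductive system generated by $(a\to c)\to b$ and $(b\to a)\to b$, extends it to a completely irreducible $C$ avoiding $b$, and then exhibits \emph{two distinct} proper deductive systems $D(C,b)$ and $D(C,a)$ properly containing $C$, contradicting the uniqueness hypothesis head-on. You instead target the equivalent two-variable identity (FN1) of Theorem~\ref{equivalenciasFN}, reduce via the separating family ${\cal CI}(N)$ (Theorem~\ref{CIdeductivesystem} applied to $\{1\}$) to showing that each completely irreducible $C$ contains one of $x$, $\gen y$, $x\to y$, and then use the uniqueness hypothesis \emph{positively}: both $D(C,x)$ and $D(C,x\to y)$ are proper deductive systems properly extending $C$, hence equal, and modus ponens inside that common system collides with $y\notin D(C,x)$. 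Both arguments run on the same machinery (Lemma~\ref{L42} and Theorem~\ref{CIdeductivesystem}), but yours trades the three-variable bookkeeping of (NT$_3$) for the inequality $\gen(x\to y)\le\gen y$ (from (N\ref{111220245}) and (N\ref{111220246})), which is exactly what certifies properness of $D(C,x\to y)$ and is where the hypothesis $\gen y\notin C$ is consumed; the paper's version is self-contained at the level of (NT$_3$) and needs no appeal to Theorem~\ref{equivalenciasFN}. Your identification of the properness bookkeeping as the delicate point is accurate, and every step of it checks out.
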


\begin{proof} Assume $N$ is as in the statement of the theorem, but not five-valued, that is, there exist $a,b,c\in N$ such that 
	\begin{equation} \label{201124_T44_01}
		((a\to c)\to b)\to(((b \to a)\to b)\to b)\neq 1.	
	\end{equation}	
	Let $D$ be the deductive system generated by the elements $(a\to c)\to b$ and $(b \to a)\to b$, so by Lemma \ref{L42}
	$$ D=\{x \in N: ((a\to c)\to b)\to (((b \to a)\to b)\to x)=1\}.$$
	So by (\ref{201124_T44_01}),	 
	$b\notin D$ and therefore by Theorem \ref{THR} there exists $C\in {\cal CI}(N)$ such that 
	\begin{equation} \label{201124_T44_03}
		b\notin C\ \mbox{ and } D\subseteq C.	
	\end{equation}
	Since $(b \to a)\to b\in D\subseteq C$, if  $b\to a \in C$ then $b\in C$, a contradiction. Therefore, 
	\begin{equation} \label{201124_T44_04}
		b\to a\notin C	
	\end{equation}
	and since $a\leq b\to a$ we also have  
	\begin{equation} \label{201124_T44_05}
		a\notin C.	
	\end{equation}
	Let  $D_1=D(C, b)=\{x\in N:b\to x \in C\}$, and $D_2=D(C, a)=\{x\in N:a\to x \in C\}$,	so   $C\subseteq D_1$ and $C\subseteq D_2$.
	By (\ref{201124_T44_04}) $a\notin D_1$, so $D_1$ is a proper deductive system and since $b\in D_1$, by (\ref{201124_T44_03}) it follows that 
	\begin{equation} \label{201124_T44_06}
		C \subset D_1.	
	\end{equation}
 Since $(a\to c)\to b\in D\subseteq C$, if $a\to c \in C$ then $b\in C$, a contradiction, so 	$a\to c \notin C$. Therefore $c\notin D_2$,  $D_2$ is proper and since $a\in D_2$, by (\ref{201124_T44_05}) we obtain 
	\begin{equation} \label{201124_T44_07}
		C\subset D_2.	
	\end{equation}	
	From $a\in D_2$ and $a\notin D_1$, we conclude that $D_1\neq D_2$.
	
	From (\ref{201124_T44_06}) or (\ref{201124_T44_07}) it follows that $C$ is not  a maximal deductive system and both   $D_1$ and $D_2$ are proper deductive systems properly containing  $C$, which contradicts the hypothesis, so $N$ is five-valued.
\end{proof}

From  Corollary \ref{C42} and Theorem \ref{T44} we get the two following corollaries that characterize five-valued Nelson algebras in terms of their irreducible deductive systems:

\begin{corollary} {\rm (D. Brignole, \cite{brignole65nelson}, Corollary 2, pag. 2)}
	\label{Cdb}  A Nelson algebra is five-valued if and only if every non-maximal $C\in {\cal CI}(N)$, is properly contained in a unique proper deductive system.
\end{corollary}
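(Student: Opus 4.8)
The plan is to prove the biconditional by treating each implication separately, leaning entirely on the machinery already assembled: Theorem \ref{T43}, Corollary \ref{C42}, and Theorem \ref{T44}.

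For the forward implication, I would assume $N$ is five-valued and fix an arbitrary non-maximal $C \in {\cal CI}(N)$. First I would invoke Theorem \ref{T43}: since $N$ is five-valued, any proper deductive system $D$ with $C \subset D$ is automatically maximal, so the proper deductive systems properly containing $C$ coincide with the maximal ones properly containing $C$. Then I would apply Corollary \ref{C42}, which guarantees that there is exactly one maximal deductive system properly containing $C$. Combining these two facts yields that $C$ is properly contained in a unique proper deductive system, as required.

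For the converse, I would observe that the stated hypothesis---that every non-maximal $C \in {\cal CI}(N)$ is properly contained in a unique proper deductive system---is, after rephrasing, precisely the disjunctive hypothesis of Theorem \ref{T44}: for each $C \in {\cal CI}(N)$, either $C$ is maximal, or there is a unique proper deductive system containing it properly. Hence Theorem \ref{T44} applies directly and $N$ is five-valued.

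I do not expect a genuine obstacle, since both directions reduce to already-proved results. The only point requiring care is in the forward direction, where one must not conflate ``unique maximal deductive system above $C$'' (the content of Corollary \ref{C42}) with ``unique proper deductive system above $C$'' (the content asserted here); the bridge between the two is exactly Theorem \ref{T43}, which collapses the distinction by forcing every proper over-system of a completely irreducible deductive system to be maximal.
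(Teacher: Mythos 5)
Your proof is correct and follows essentially the same route as the paper, which simply states that the corollary is obtained from Corollary~\ref{C42} and Theorem~\ref{T44} without writing out the details. Your explicit appeal to Theorem~\ref{T43} to pass from ``unique maximal deductive system properly containing $C$'' to ``unique proper deductive system properly containing $C$'' is precisely the detail the paper leaves implicit, and you handle it correctly.
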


\begin{corollary} {\rm (D. Brignole, \cite{brignole65nelson}, Corollary 3, pag. 3)}
	\label{C42a}  A Nelson algebra $N$ is five-valued if and only if for every  $D\in {\cal I}(N)$, $D\notin {\cal M}(N)$ or there exists a unique $M \in {\cal M}(N)$ such that $D\subset M$.
\end{corollary}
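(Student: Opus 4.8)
The plan is to deduce this characterization from the companion statement for completely irreducible deductive systems, Corollary~\ref{Cdb}, by passing between the families ${\cal CI}(N)$ and ${\cal I}(N)$. Two facts make this transfer possible: the inclusion ${\cal CI}(N)\subseteq{\cal I}(N)$ holds in every Nelson algebra (Observation~\ref{Rnuev}), while the reverse inclusion, and hence the equality ${\cal I}(N)={\cal CI}(N)$, becomes available as soon as $N$ is five-valued (Corollary~\ref{C48a}).

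For the direct implication I would assume $N$ five-valued and fix $D\in{\cal I}(N)$. By Corollary~\ref{C48a}, $D\in{\cal CI}(N)$, so one of two situations occurs. Either $D$ is maximal, which gives the first alternative, or $D$ is not maximal, in which case Corollary~\ref{C42} yields a unique maximal deductive system $M$ with $D\subset M$. To see that $M$ is genuinely the only proper deductive system above $D$, and not merely the only maximal one, I would invoke Theorem~\ref{T43}: every proper deductive system that properly contains the completely irreducible $D$ is itself maximal, so uniqueness among maximal deductive systems upgrades to uniqueness among all proper ones. This delivers the second alternative.

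For the converse I would assume the displayed condition for every $D\in{\cal I}(N)$ and simply restrict it, via Observation~\ref{Rnuev}, to the subfamily ${\cal CI}(N)$. Each completely irreducible deductive system is then either maximal or properly contained in a unique proper deductive system, which is precisely the hypothesis of Theorem~\ref{T44}; it follows that $N$ is five-valued. (Equivalently, the restricted condition is exactly the one appearing in Corollary~\ref{Cdb}.) I expect the main obstacle to be bookkeeping the distinction between a unique \emph{maximal} and a unique \emph{proper} deductive system lying above $D$: these notions can diverge in a general Nelson algebra, and their coincidence for irreducible deductive systems is exactly what Theorem~\ref{T43} supplies. The argument must therefore be careful to use Theorem~\ref{T43} on the five-valued side and to carry the equality ${\cal I}(N)={\cal CI}(N)$ along in the appropriate direction.
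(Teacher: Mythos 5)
Your forward direction is sound and is essentially the paper's intended argument: Corollary~\ref{C48a} turns the irreducible $D$ into a completely irreducible one, and Corollary~\ref{C42} (together with Theorem~\ref{T43}, if one also wants uniqueness among all proper deductive systems) produces the unique maximal $M$ above a non-maximal $D$.

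The converse is where a genuine gap lies, and you point at it yourself without closing it. The displayed condition gives, for each non-maximal $D\in{\cal CI}(N)\subseteq{\cal I}(N)$, a unique \emph{maximal} deductive system properly containing $D$; the hypothesis of Theorem~\ref{T44} (equivalently, of Corollary~\ref{Cdb}) asks for a unique \emph{proper} deductive system properly containing $D$. Your sentence ``Each completely irreducible deductive system is then either maximal or properly contained in a unique proper deductive system, which is precisely the hypothesis of Theorem~\ref{T44}'' silently replaces the first condition by the second, and the tool you propose for reconciling them, Theorem~\ref{T43}, is only available once $N$ is already known to be five-valued --- which is exactly the conclusion you are trying to reach, so the argument is circular at that point. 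The gap is not cosmetic: in $C_6$ the deductive system $\{1\}$ is completely irreducible (it is bounded to $\tfrac{4}{5}$), it is properly contained in the two proper deductive systems $[\tfrac{4}{5})$ and $[\tfrac{3}{5})$, but in only one maximal one, namely $[\tfrac{3}{5})$; hence $C_6$ satisfies the ``unique maximal'' condition for every irreducible deductive system although it is not five-valued. So the transfer cannot be repaired within your scheme, and the condition really has to be read, as in Brignole's original Corollary~3, with ``a unique proper deductive system'' in place of ``a unique $M\in{\cal M}(N)$''; under that reading your restriction to ${\cal CI}(N)$ followed by Theorem~\ref{T44} goes through verbatim, which is also all the paper itself does (it states that the corollary follows from Corollary~\ref{C42} and Theorem~\ref{T44} without spelling out the details).
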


\begin{lemma}
	\label{L51} {\rm (A. Monteiro, \cite{monteiro63nelson})}.
	If $N$ is a Nelson algebra and  $D$ a deductive system in $N$ then the relation defined on  $N$ by 
	$$x\equiv_D y\text{ if and only if } x\to y, y\to x, \sim x\to \sim y,\sim y\to \sim x \in D$$ is a congruence of Nelson algebras. 
\end{lemma}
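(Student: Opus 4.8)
The plan is to check the three requirements of a congruence in turn: that $\equiv_D$ is an equivalence relation, and that it is compatible with $\sim$, with $\land$ and $\lor$, and with $\to$. Everything rests on one transitivity sublemma: \emph{if $a\to b\in D$ and $b\to c\in D$ then $a\to c\in D$.} To see this, note $b\to c\le a\to(b\to c)$ by (N\ref{111220245}), so $a\to(b\to c)\in D$ since $D$ is a filter; by (N\ref{111220243}) this element equals $(a\to b)\to(a\to c)$, and since $a\to b\in D$, closure under modus ponens (D\ref{defds2}) gives $a\to c\in D$. Reflexivity and symmetry of $\equiv_D$ are immediate from (N\ref{nelsonax6}) and the symmetric form of the definition, and transitivity follows by applying the sublemma to each of the four defining implications (for the $\sim$-implications one uses it with negated arguments).

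For compatibility it suffices to treat one argument of each binary operation at a time and then compose via transitivity. Compatibility with $\sim$ is trivial: after (N\ref{nelsonax3}) the four conditions for $\sim x\equiv_D\sim y$ are exactly those for $x\equiv_D y$. For $\lor$ I would use (N\ref{111220242}): writing $(x_1\lor x_2)\to(y_1\lor y_2)=(x_1\to(y_1\lor y_2))\land(x_2\to(y_1\lor y_2))$, each conjunct lies in $D$ by the sublemma (since $y_i\to(y_1\lor y_2)=1$ by (N\ref{111220248})), hence so does the meet; the $\sim$-conditions for $\lor$ have the same shape. Using the dual identity $z\to(u\land v)=(z\to u)\land(z\to v)$ (which follows from the listed calculus) together with (N\ref{111220246}), the analogue for $\land$ follows, while the $\sim$-conditions for $\land$ reduce to the $\lor$-case through De Morgan (N\ref{nelsonax4}), (N\ref{nelsonax3}). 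This gives the bifunctoriality statements: $p\to p',\,q\to q'\in D$ imply $(p\land q)\to(p'\land q')\in D$ and $(p\lor q)\to(p'\lor q')\in D$.

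The delicate operation is $\to$. Its two ``positive'' conditions are easy: from $a\to b\in D$ we get $(c\to a)\to(c\to b)=c\to(a\to b)\in D$ by (N\ref{111220243}) and (N\ref{111220245}), and $(b\to c)\to(a\to c)=a\to((b\to c)\to c)\in D$ by (N\ref{detesto}) (note $b\to((b\to c)\to c)=1$ by (N\ref{detesto}), (N\ref{nelsonax6})) together with the sublemma. The real obstacle is the strong-negation conditions, e.g. $\sim(x_1\to x_2)\to\sim(y_1\to y_2)\in D$, since $\sim(u\to v)$ admits no De Morgan decomposition. I would route these through the key identity
\[(x\land\sim y)\to\sim(x\to y)=1=\sim(x\to y)\to(x\land\sim y).\]
The right equality is immediate: (N\ref{111220245}) gives $\sim x,y\le x\to y$, so $\sim(x\to y)\le x\land\sim y$, and (N\ref{111220248}) applies. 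For the left equality, applying $\sim$ to (N\ref{nelsonax8}) and using (N\ref{nelsonax4}), (N\ref{nelsonax3}) yields $\sim x\lor\sim(x\to y)=\sim x\lor(x\land\sim y)$, whence $x\land\sim y\le\sim(x\land\sim y)\lor\sim(x\to y)$; then $\sim u\lor v\le u\to v$ (again (N\ref{111220245})) gives $u\le u\to v$ for $u=x\land\sim y$, $v=\sim(x\to y)$, and finally $u\to v=u\to(u\to v)=1$ by (N\ref{1112202410}) and (N\ref{111220248}).

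Granting this identity, each strong-negation condition follows by composing three memberships with the sublemma: $\sim(x_1\to x_2)\to(x_1\land\sim x_2)\in D$, then $(x_1\land\sim x_2)\to(y_1\land\sim y_2)\in D$ by bifunctoriality (from $x_1\to y_1\in D$ and $\sim x_2\to\sim y_2\in D$), then $(y_1\land\sim y_2)\to\sim(y_1\to y_2)\in D$; the same argument covers the second argument of $\to$ and the reverse implications. I expect the main difficulty to be precisely the identity $(x\land\sim y)\to\sim(x\to y)=1$: reducing the intractable term $\sim(x\to y)$ to the lattice term $x\land\sim y$ modulo the weak preorder ``$u\to v=1$'' is the step that makes $\to$-compatibility go through, and everything else is bookkeeping with the sublemma and the filter axioms.
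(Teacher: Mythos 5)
The paper itself gives no proof of this lemma---it is stated with a citation to A.~Monteiro---so there is nothing internal to compare against; judged on its own, your argument is correct and complete. Your overall architecture (transitivity sublemma from (N\ref{111220245}), (N\ref{111220243}) and modus ponens; one-argument-at-a-time compatibility composed via that sublemma; the positive $\to$-conditions via (N\ref{detesto}) and (N\ref{111220243}); and the reduction of $\sim(x\to y)$ to $x\land\sim y$ modulo the preorder $u\to v=1$) is exactly the classical route of Rasiowa and Monteiro, the last step being one of Vorob'ev's theses for constructive logic with strong negation, and your derivation of it from (N\ref{nelsonax8}) by applying $\sim$ and then using (N\ref{1112202410}) with (N\ref{111220248}) is clean and checks out. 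Two small points to tidy: first, the remark that ``the $\sim$-conditions for $\lor$ have the same shape'' is not literally right---by De Morgan they have the meet-to-meet shape---but this is harmless since you establish the $\land$-case immediately afterward, so all four shapes are covered; second, the identity $z\to(u\land v)=(z\to u)\land(z\to v)$ is invoked as ``follows from the listed calculus'' without derivation---it is a standard Nelson identity (it appears as axiom N10 in the axiomatization reproduced in Appendix~\ref{S11}) but is not among the rules (N1)--(N20) listed in Section~\ref{S2}, so it deserves either an explicit derivation or a citation.
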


If $x\in N$, we denote the equivalence classes by $|x|_D=\{y \in N:y  \equiv_D x\}$. We write  $N/D$ for the quotient algebra  $N$ over $D$ determined by the equivalence classes of the relation  $\equiv_D$, with the induced operations. The homomorphism $h_D: N \to N/D$ defined by $h_D(x)=|x|_D$ is called the natural or canonical epimorphism  from $N$ onto $N/D$.  It is well known that $|1|_D=D$, $|0|_D=\sim D$, and  
\begin{equation}
	\label{ec1}
	|0|_D\leq |x|_D\leq  |1|_D,\text{ for every } x\in N.
\end{equation}

\subsection{Representation theorems}

\begin{lemma}  \label{lema_pre_teo_repres2}
	Let	$N$ be a Nelson algebra and $M \in{\cal M}(N)$. Then:
	\begin{enumerate}[{\rm(a)}]
		\item If $M = \varphi(M)$ then $N/M\cong C_2$ and the corresponding equivalence classes are  $M$ and $\sim M$.
		\item If $M\subset \varphi(M)$ then $N/M\cong C_3$ and the equivalence classes of the quotient are  $M,\varphi(M)\setminus M$, and $\sim M$.
	\end{enumerate}
\end{lemma}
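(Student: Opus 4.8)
The plan is to use the Birula--Rasiowa transformation to cut $N$ into at most three pieces and then identify each piece with an equivalence class of $\equiv_M$. First I would record that, since $M$ is maximal, Lemma \ref{L41} gives $M\in\mathcal{I}(N)$, and then Lemma \ref{L410} gives $M\in X(N)$ together with $M\subseteq\varphi(M)$; hence the two cases (a) $M=\varphi(M)$ and (b) $M\subset\varphi(M)$ are exhaustive. Next, taking complements in the identity $\varphi(M)=\complement\sim M$ of Definition \ref{RBR} yields $\sim M=\complement\varphi(M)$, so that $N$ is the disjoint union $N=M\,\sqcup\,(\varphi(M)\setminus M)\,\sqcup\,\sim M$, the disjointness of $M$ and $\sim M$ being Lemma \ref{L43res}.

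The core of the argument is to show these three sets are exactly the classes of $\equiv_M$. The classes of the top and bottom are already pinned down: $|1|_M=M$ and $|0|_M=\sim M$. In case (a) the middle piece $\varphi(M)\setminus M$ is empty and $\sim M=\complement M$, so $N=M\sqcup\sim M$ has precisely the two classes $M$ and $\sim M$, and the quotient has two elements. In case (b) I would prove that $\varphi(M)\setminus M$ is a single class: if $x,y\in\varphi(M)\setminus M$ then $x,y\notin M$ and, since $\sim M=\complement\varphi(M)$, also $\sim x,\sim y\notin M$; maximality of $M$ together with Lemma \ref{L416} then gives all four of $x\to y,\ y\to x,\ \sim x\to\sim y,\ \sim y\to\sim x$ in $M$, whence $x\equiv_M y$. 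The same computation with $y=\sim x$ shows the middle piece is closed under $\sim$, as it must be for the central element of $C_3$. Since each middle element lies outside $M=|1|_M$ and outside $\sim M=|0|_M$, this one class is distinct from the other two, so the quotient has exactly three elements.

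To finish, I would invoke Remark \ref{R51}: a Nelson algebra with two elements is isomorphic to $C_2$ and one with three elements is isomorphic to $C_3$ (the four-element alternative $C_2\times C_2$ cannot arise for these cardinalities). Since $N/M$ is a Nelson algebra by Lemma \ref{L51}, this gives $N/M\cong C_2$ in case (a) and $N/M\cong C_3$ in case (b), with the stated classes; that the middle class sits strictly between $|0|_M$ and $|1|_M$ is guaranteed by (\ref{ec1}). The only step needing care is the bookkeeping of the Birula--Rasiowa partition and the verification that $|1|_M$ and $|0|_M$ do not absorb any middle element; once Lemma \ref{L416} is in hand the equivalence of all middle elements is immediate, so I do not expect a genuine obstacle beyond this bookkeeping.
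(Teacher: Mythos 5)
Your proposal is correct and follows essentially the same route as the paper: partition $N$ into $M$, $\varphi(M)\setminus M$, and $\sim M$ via the Birula--Rasiowa transformation, use Lemma \ref{L416} to show all middle elements are $\equiv_M$-equivalent, and identify the quotient via Observation \ref{R51}. The only cosmetic difference is that the paper verifies the inclusion $|x|_M\subseteq\varphi(M)\setminus M$ by a direct element-wise argument, whereas you obtain it from the disjointness of classes together with $|1|_M=M$ and $|0|_M=\sim M$, which is a legitimate shortcut.
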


\begin{proof}
	\begin{enumerate}[(a)]
		\item From  $M = \varphi(M)$, it follows that $\sim M=\complement M$, so $\set{M,\sim M}$ is a partition of $N$. Furthermore,  $|0|_M=\complement M \neq M= |1|_M$, and by  	(\ref{ec1}), $|0|_M< |1|_M$ so	$N/M\cong C_2$.
		
		\item Since  $\varphi(M)\in X( N)$, then $\varphi(M) \subset N$, so we have the proper inclusions \linebreak[5] $M \subset \varphi(M) \subset N$. 	Let us now prove that
		\begin{equation} \label{151124_12}
			\text{if } x\in \varphi(M)\setminus M   \text{ then } |x|_M=\varphi(M)\setminus M.
		\end{equation}
		Let 
		\begin{equation} \label{151124_13}
			x\in \varphi(M)\setminus M.
		\end{equation}
		From (\ref{151124_13}) we get that 
		\begin{equation} \label{151124_14}
			x,\sim x \notin M.
		\end{equation}
		We will check that $|x|_M=\varphi(M)\setminus M.$ Consider first $t\in |x|_M$ so in particular we have 
		\begin{equation} \label{151124_15}
			t\to x\in M
		\end{equation}
		and
		\begin{equation} \label{151124_16}
			\sim t \to \sim x\in M.
		\end{equation}
		If $t\in M$ then from (\ref{151124_15}) it follows $x\in M$, contradicting (\ref{151124_14}). If $t\notin \varphi(M)$ then $\sim t\in M$ so by (\ref{151124_16}) we get $\sim x \in M$, contradicting (\ref{151124_14}). Therefore, $t \in \varphi(M)\setminus M $ so we have proved that $|x|_M \subseteq \varphi(M)\setminus M$.
		
		To see the other inclusion, let $t \in \varphi(M)\setminus M$. Then 
		\begin{equation} \label{151124_17}
			t,\sim t \notin M.
		\end{equation}
		From  (\ref{151124_14}) and (\ref{151124_17}), since $M$ is maximal,  by Lemma \ref{L416},  we obtain $x\to t, t\to x$, \linebreak[3] $\sim x\to \sim t, \sim t \to \sim x \in M$, so by Lemma \ref{L51} $t\in |x|_M$. Therefore,  $\varphi(M)\setminus M\subseteq |x|_M$, and condition (\ref{151124_12}) is proved.
		
		Next we prove:
		\begin{equation} \label{151124_18}
			\text{if } x\in \varphi(M)\setminus M \text{ then } \sim |x|_M= |x|_M.
		\end{equation}
		Let $x\in \varphi(M)\setminus M$.
		From (\ref{151124_12}), $|x|_M= \varphi(M)\setminus M$, so $\sim|x|_M= \sim(\varphi(M)\setminus M)=$ \linebreak[3] $\sim (\varphi(M)\cap \complement M) \stackrel{{\rm(N\ref{111220249})}}{=}\sim \sim \complement M \cap \sim \complement M= \complement M\cap  \varphi(M)= \varphi(M)\setminus M=|x|_M$.
		
		Every element not in $M$ or $\sim M$ is in $\varphi(M)\setminus M$: $\complement (M\cup\sim M)=\complement M\cap\complement\sim M=\varphi(M)\setminus M$. Thus,  $\{M,\varphi(M)\setminus M,\sim M\}$ is a partition of $N$. Therefore, $N/M$ is a Nelson algebra with three elements, and by  Observation \ref{R51},  $N/M\cong C_3$.
	\end{enumerate}	
\end{proof}

\begin{theorem} {\rm (A. Monteiro, \cite{monteiro62algebra,monteiro63nelson,monteiro95nelson,monteiro96nelson})}
	\label{T51}
	If $N$ is a Nelson algebra  and $M\in {\cal M}(N)$  then $N/M\cong C_2$ or $N/M\cong C_3$. 
\end{theorem}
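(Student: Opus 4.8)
The plan is to reduce the statement directly to Lemma \ref{lema_pre_teo_repres2}, which already computes the quotient in each of the two cases $M=\varphi(M)$ and $M\subset\varphi(M)$, yielding $C_2$ and $C_3$ respectively. Thus the only point left to establish is that, for a \emph{maximal} deductive system $M$, one of these two alternatives must hold; equivalently, that the inclusion $M\subseteq\varphi(M)$ always holds, so that $M$ and $\varphi(M)$ cannot be incomparable nor satisfy $\varphi(M)\subsetneq M$.

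First I would pass from maximality to irreducibility. Since $N$ is non-trivial (it admits a maximal, hence proper, deductive system $M$), Lemma \ref{L41} gives $\mathcal{M}(N)\subseteq\mathcal{I}(N)$, so $M\in\mathcal{I}(N)$. Now Lemma \ref{L410} applies to $M$: it guarantees both that $M\in X(N)$ is a prime filter and, crucially, that $M\subseteq\varphi(M)$. This is exactly the inclusion we need; it rests ultimately on the Kleene-algebra comparability of $M$ and $\varphi(M)$ recorded in Definition \ref{RBR}, but Lemma \ref{L410} already delivers the inclusion in the required direction.

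With $M\subseteq\varphi(M)$ in hand, exactly one of the two mutually exclusive possibilities occurs: either $M=\varphi(M)$, or the inclusion is proper, $M\subset\varphi(M)$. In the first case, part (a) of Lemma \ref{lema_pre_teo_repres2} gives $N/M\cong C_2$ with equivalence classes $M$ and $\sim M$; in the second, part (b) gives $N/M\cong C_3$ with classes $M$, $\varphi(M)\setminus M$, and $\sim M$. Since these two cases are exhaustive, the proof is complete.

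I do not expect any genuine obstacle here, because the substantive work has been isolated into Lemma \ref{lema_pre_teo_repres2}: the identification of the equivalence classes, the verification that $\{M,\varphi(M)\setminus M,\sim M\}$ partitions $N$, and the recognition of the three-element quotient via Observation \ref{R51}. The single conceptual step worth flagging in the write-up is that the dichotomy $M=\varphi(M)$ versus $M\subset\varphi(M)$ is genuinely exhaustive, which is precisely what the inclusion $M\subseteq\varphi(M)$ from Lemma \ref{L410} secures.
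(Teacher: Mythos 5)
Your proof is correct and follows essentially the same route as the paper: establish $M\subseteq\varphi(M)$ and then invoke the dichotomy of Lemma \ref{lema_pre_teo_repres2}. You are in fact slightly more careful than the paper, which simply asserts ``we know that $M\subseteq\varphi(M)$'' (and cites Lemma \ref{lema_pre_teo_repres} where Lemma \ref{lema_pre_teo_repres2} is clearly intended), whereas you justify the inclusion explicitly via Lemmas \ref{L41} and \ref{L410}.
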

\begin{proof}
	We know that $M\subseteq \varphi(M)$. Then by Lemma \ref{lema_pre_teo_repres}, $N/M\cong C_2$ or $N/M\cong C_3$.
\end{proof}

	\begin{lemma} 
	\label{L99} If $N$ is a five-valued Nelson algebra, $D\in {\cal I}(N)$,  $D\subset M$ and $ M\in {\cal M}(N)$,
	then $D$ is bounded to every $x\in M\setminus D$.  
\end{lemma}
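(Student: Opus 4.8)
The plan is to use the collapse ${\cal I}(N)={\cal CI}(N)$ available in a five-valued Nelson algebra, which makes $D$ completely irreducible and brings the uniqueness of the enclosing maximal system into play. First I would apply Corollary \ref{C48a} to get $D\in{\cal CI}(N)$. Since $D\subset M$ with $M$ maximal (hence proper), $D$ is properly contained in a proper deductive system and is therefore not maximal; Corollary \ref{C42} then ensures that $M$ is the \emph{unique} maximal deductive system that properly contains $D$.

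Next I fix an arbitrary $x\in M\setminus D$ and verify the three requirements for $D$ to be bounded to $x$: that $x\neq 1$ (immediate, as $1\in D$ while $x\notin D$), that $x\notin D$ (given), and that $D$ is maximal among the deductive systems omitting $x$. For the last requirement I would argue by contradiction. Suppose a deductive system $E$ satisfies $D\subset E$ (proper) and $x\notin E$. Since $x\notin E$ we have $E\neq N$, so $E$ is proper; and since $D\in{\cal CI}(N)$ with $D\subset E$, Theorem \ref{T43} forces $E\in{\cal M}(N)$.

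Thus $E$ is a maximal deductive system properly containing $D$, so by the uniqueness noted above $E=M$, whence $x\in M=E$, contradicting $x\notin E$. Hence no proper extension of $D$ can omit $x$, so $D$ is maximal among deductive systems not containing $x$; that is, $D$ is bounded to $x$. As $x\in M\setminus D$ was arbitrary, the lemma follows.

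The argument is short because Corollaries \ref{C48a} and \ref{C42} together with Theorem \ref{T43} carry the weight; the only delicate points are checking that the auxiliary extension $E$ meets the hypotheses of Theorem \ref{T43} (properness of $E$ and strictness of the inclusion $D\subset E$) and matching the conclusion to the exact definition of ``bounded to'' an element $\neq 1$. I do not anticipate any substantive obstacle beyond this verification.
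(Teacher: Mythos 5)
Your proof is correct and follows essentially the same route as the paper: both argue by contradiction, use Corollary \ref{C48a} to see that $D$ is completely irreducible, and then use Theorem \ref{T43} together with the uniqueness from Corollary \ref{C42} to force any proper extension of $D$ omitting $x$ to coincide with $M$, contradicting $x\in M$. Your version merely spells out the intermediate appeal to Theorem \ref{T43} and the properness checks that the paper's proof leaves implicit.
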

\begin{proof} Let $x\in M\setminus D$ and assume $D$ is not bounded to  $x$. Then there exists $D'\in {\cal D} (N) $ such that $D\subset D'$ and $x\notin D'$.  Thus we have   $D\subset D'$ and $D\subset M$, with both $D'$ and $M$ proper deductive systems. Since $D$ is not maximal, by  Corollaries  \ref{C48a} 	 and \ref{C42} it follows that $D'=M$,  which is a contradiction because $x\notin D'$ and $x\in M$.
\end{proof}

	\begin{corollary}
	\label{C99}
	If $N$ is a five-valued  Nelson algebra, $D\in {\cal I}(N)$, $D\subset M$, and \linebreak[5] $ M\in {\cal M}(N)$ then
	$y\to x \in D$ for every $x \in  M\setminus D$, $y\notin D$.  
\end{corollary}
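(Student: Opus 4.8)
The plan is to reduce everything directly to the two immediately preceding results, Lemma \ref{L99} and Lemma \ref{CT42}, since the corollary is essentially their composition. First I would fix an arbitrary $x \in M\setminus D$ and an arbitrary $y\notin D$, so that the goal becomes the single membership $y\to x\in D$.

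The key observation is that the hypotheses of the corollary are precisely those of Lemma \ref{L99}: $N$ is five-valued, $D\in{\cal I}(N)$, $D\subset M$, and $M\in{\cal M}(N)$. Applying that lemma to the chosen $x\in M\setminus D$ yields that $D$ is bounded to $x$. This is the crucial step, as it converts the maximality of $M$ together with the five-valued hypothesis into a concrete boundedness property of $D$ with respect to the specific element $x$.

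Once $D$ is known to be bounded to $x$, I would invoke Lemma \ref{CT42}, taking the bounding element $d$ there to be our $x$. Since $y\notin D$, that lemma gives exactly $y\to x\in D$, which is the desired conclusion. Because $x\in M\setminus D$ and $y\notin D$ were arbitrary, the statement holds for all such pairs.

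The proof carries no real obstacle beyond recognizing that the element witnessing boundedness in Lemma \ref{CT42} should be taken to be the arbitrary $x\in M\setminus D$, an identification that is licensed precisely by Lemma \ref{L99}. In effect the corollary records the composite $y\notin D \Rightarrow y\to x\in D$ obtained by chaining boundedness (from \ref{L99}) into the implication rule (from \ref{CT42}).
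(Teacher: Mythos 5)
Your proposal is correct and matches the paper's own proof exactly: apply Lemma \ref{L99} to conclude that $D$ is bounded to each $x\in M\setminus D$, then apply Lemma \ref{CT42} with $d=x$ to obtain $y\to x\in D$ for every $y\notin D$.
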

\begin{proof} By Lemma \ref{L99} $D$ is bounded to each $x\in M\setminus D$ so by Lemma \ref{CT42}, $y\to x\in D$ for every $y \notin D$.           
\end{proof}

\begin{lemma} \label{lema_pre_teo_repres}
	If	$N$ is a non-trivial Nelson algebra, $D\in{\cal I}(N)$, and $M \in{\cal M}(N)$ are such  $D \subset M$, then:
	\begin{enumerate}[{\rm (a)}]
		\item If $M = \varphi(M)$ then $N/D\cong C_4$, and the equivalences classes are  $D, M\setminus D, \varphi(D)\setminus M$, and $\sim D$.  Furthermore, $N/M\cong C_2$.
		\item If $M\subset \varphi(M)$ then $N/D\cong C_5$, with equivalence classes $D, M\setminus D, \varphi(M)\setminus M$, \linebreak[4] $\varphi(D)\setminus \varphi(M)$, and $\sim D$. In this case $N/M\cong C_3$.
	\end{enumerate}
\end{lemma}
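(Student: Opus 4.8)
The plan is to pass to the quotient $N/D$, show that it has exactly four (resp.\ five) classes, namely the sets listed, and then identify the algebra from its size plus one structural feature. Throughout I use that $D\in X(N)$ with $D\subseteq\varphi(D)$ (Corollary~\ref{equivalenciasP1}), that $M\in{\cal M}(N)\subseteq{\cal I}(N)$ (Lemma~\ref{L41}) likewise satisfies $M\subseteq\varphi(M)$, and that $\varphi$ is, as one checks at once, an order-reversing involution on prime filters with $\sim D=\complement\varphi(D)$. Applying $\varphi$ to $D\subset M$ yields $\varphi(M)\subset\varphi(D)$, so in case (a), where $M=\varphi(M)$, we obtain the chain $D\subset M\subset\varphi(D)$, and in case (b) the chain $D\subset M\subset\varphi(M)\subset\varphi(D)$. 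Since $\sim D=\complement\varphi(D)$ and $D\subseteq M\subseteq\varphi(M)\subseteq\varphi(D)$, the listed sets partition $N$ into nonempty blocks; a short computation with (N\ref{111220249}) shows $\sim(M\setminus D)=\varphi(D)\setminus\varphi(M)$ and that $\varphi(M)\setminus M$ is fixed by $\sim$. The assertions $N/M\cong C_2$ (resp.\ $C_3$) are exactly Lemma~\ref{lema_pre_teo_repres2}.

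The heart of the argument is the observation that \emph{if $a\notin D$ and $a\to b\in M$, then $a\to b\in D$}. Indeed, were $a\to b\in M\setminus D$, then Corollary~\ref{C99} (with $a\to b\in M\setminus D$ as target and $a\notin D$ as source) would give $a\to(a\to b)\in D$; but $a\to(a\to b)=a\to b$ by (N\ref{1112202410}), contradicting $a\to b\notin D$. This is the step where the five-valued hypothesis is used, via Corollary~\ref{C99}.

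With this I show each middle block is a single $\equiv_D$-class. First, $|1|_D=D$ and $|0|_D=\sim D$ are classes. For any $x,y$ in a common middle block $S$, their elements lie outside $D$, and $x\to y\in M$: if $S\subseteq\complement M$ (the blocks $\varphi(M)\setminus M$ and $\varphi(D)\setminus\varphi(M)$) this is Lemma~\ref{L416}, while if $S\subseteq M$ (the block $M\setminus D$) it holds because $y\le x\to y$ by (N\ref{111220245}) and $M$ is upward closed. The observation then gives $x\to y\in D$, and symmetrically $y\to x\in D$. As each $\sim S$ is again one of the middle blocks, the same applied to $\sim x,\sim y\in\sim S$ gives $\sim x\to\sim y,\ \sim y\to\sim x\in D$, so $x\equiv_D y$ by Lemma~\ref{L51}. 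Thus each block lies in one class. That \emph{distinct} blocks give distinct classes follows by composing with the canonical epimorphism onto $N/M$, which is coarser than $\equiv_D$ since $D\subseteq M$: the blocks map to the pairwise distinct classes of $N/M$ from Lemma~\ref{lema_pre_teo_repres2}, and in the two fibres containing two blocks the extra block is separated from $|1|_D$ or $|0|_D$ because it does not contain $1$ (resp.\ $0$). Hence the listed sets are exactly the $\equiv_D$-classes, so $|N/D|=4$ in case (a) and $5$ in case (b).

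Finally I identify the quotient. By Observation~\ref{R51} a five-element Nelson algebra must be $C_5$, which settles case (b). In case (a) the two possibilities are $C_4$ and $C_2\times C_2$; the latter is excluded because primality of $D$ makes the top of $N/D$ join-prime (if $|a|_D\lor|b|_D=|1|_D$ then $a\lor b\in D$, so $a\in D$ or $b\in D$), whereas in $C_2\times C_2$ the top is the join of two strictly smaller elements. Hence $N/D\cong C_4$, and the statements $N/M\cong C_2$ and $N/M\cong C_3$ are the ones already recorded. I expect the main obstacle to be precisely the observation of the second paragraph, i.e.\ ruling out that a middle block splits under the finer relation $\equiv_D$; once that is in hand, the remainder is bookkeeping with $\varphi$ and the partition.
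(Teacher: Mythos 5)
Your proof is correct and follows essentially the same route as the paper's: the equivalence classes are identified with the blocks of the chain $D\subset M\subseteq\varphi(M)\subset\varphi(D)$ by means of Corollary \ref{C99} together with (N\ref{1112202410}), and the quotient is then pinned down via Lemma \ref{lema_pre_teo_repres2} and Observation \ref{R51}. The only cosmetic differences are that you package the repeated Corollary~\ref{C99}-plus-idempotence trick into a single reusable observation covering all middle blocks at once, and that you exclude $C_2\times C_2$ by join-primality of the top element (the paper dually shows $|x\land y|_D\neq|0|_D$ for representatives of the two middle blocks); note that, exactly as in the paper's own proof, your argument needs the five-valued hypothesis in order to invoke Corollary \ref{C99}, even though the statement of the lemma only says ``Nelson algebra''.
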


\begin{proof}
	\begin{enumerate}[(a)]
		\item Since  $D \subset M$,  $\sim D \subset \sim M$, and $\varphi(M) \subset \varphi(D).$ From $M=\varphi(M)$ it follows that 			
		\begin{equation}
			\label{ec1aa}
			\sim M=\complement M
		\end{equation}
		and  $D\subset M=\varphi(M)\subset \varphi(D)\subset N$.
		
		Now we check that the sets $(\varphi(D)\setminus M)$ and $\sim D$ are disjoint:
		$$(\varphi(D)\setminus M)\cap \sim D=
		\varphi(D)\cap \complement M \cap \sim D\stackrel{(\ref{ec1aa})}{=}$$
		$$ \varphi(D)\cap \sim M\cap \sim D\stackrel{{\rm (N\ref{111220249})}}{=}\varphi(D)\cap \sim(M\cap D)=\varphi(D)\cap \sim D=\emptyset.$$
		Next we prove: 
		\begin{equation} \label{151124_07}
			\mbox{if } z \in M\setminus D   \text{ then } |z|_{D}= M\setminus D.
		\end{equation}
		Let 
		\begin{equation} \label{151124_03}
			z \in M\setminus D.
		\end{equation}
		Consider  $t\in M\setminus D$. Since $t\in M\setminus D$ and $z \in M\setminus D$, by Corollary \ref{C99}, 
		\begin{equation} \label{151124_01}
			t\to z, z\to t\in D.
		\end{equation}
		Since $M$ is a proper deductive system and  $t,z\in M$, by Lemma \ref{L43res}, $\sim t,\sim z \notin M$, so by Lemma \ref{L416},  $\sim t\to \sim z,\sim z\to \sim t\in M$.
		If we assume $\sim z \to \sim t \notin D$, this is, $\sim z \to \sim t \in M\setminus D$, then since  $\sim z \notin D$, it follows by  Corollary \ref{C99} that  $\sim z\to \sim t{\stackrel{{\rm (N\ref{1112202410})}}{=}} \sim z\to (\sim z\to \sim t)\in D$, a contradiction. 
		Therefore
		\begin{equation}  \label{151124_02}
			\sim z \to \sim t\in D \text{  and similarly } \sim t \to \sim z \in D.
		\end{equation}
		From conditions (\ref{151124_01}) and (\ref{151124_02})
		it follows that  $t\in |z|_{D}$, so $M\setminus D\subseteq |z|_{D}.$
		
		For the other inclusion,  consider  $t\in |z|_{D}$. Then we have 
		\begin{equation} \label{151124_04}
			t\to z \in D
		\end{equation}
		and
		\begin{equation} \label{151124_05}
			z\to t\in D.
		\end{equation}
		If $t\in D$ then by (\ref{151124_04}) $z\in D$, which contradicts (\ref{151124_03}). Then $t\notin D$.
		From  (\ref{151124_05}) and $D\subset M$ it follows  $z\to t \in M$, and since  $z\in M$ we have that $t\in M$, so $t\in M\setminus D$. Therefore $|z|_{D}\subseteq M\setminus D$.
		
		Thus  $|z|_{D}= M\setminus D$,  proving condition (\ref{151124_07}).
		
		Let us prove now that 
		\begin{equation} \label{151124_08}
			\text{if }  x \in \varphi(D)\setminus M \text{ then } |x|_{D}= \varphi(D)\setminus M.
		\end{equation}
		Let 	$x \in \varphi(D)\setminus M$. Then $\sim x \in \sim(\varphi(D)\setminus M)= \sim(\varphi(D)\cap \complement M)\stackrel{(\ref{ec1aa})}{=}$ \linebreak[5] $\sim (\varphi(D)\cap \sim M)\stackrel{{\rm(N\ref{111220249})}}{=}\sim \varphi(D)\cap M=
		\complement D\cap M= M\setminus D$. 	 By condition (\ref{151124_07}), $M\setminus D=|y|_{D}$ for $y\in M\setminus D$. Therefore $\sim |x|_{D}=|\sim x|_{D}=|y|_{D}$, and   $|x|_{D}=\sim|y|_{D}=\sim (M\setminus D)=
		\varphi(D)\setminus M$ proving (\ref{151124_08}).
		
		Using conditions (\ref{151124_07}) and (\ref{151124_08}) we can deduce that  $$N/D=\{D, M\setminus D, \varphi(D)\setminus M,\sim D\}$$ is a Nelson algebra with four elements. Let us check it is isomorphic to the chain $C_4$, and not  a  boolean algebra. Let
		\begin{equation} \label{151124_10}
			x\in \varphi(D)\setminus M  \text{ and }
			y\in M\setminus D.
		\end{equation}
	
		Assume  $x\land y \equiv_D 0$. This implies that
		$\sim x \lor \sim y= \sim (x \land y)=\sim 0\to \sim (x \land y)\in D$, and since by Lemma \ref{L410}, $D\in X(N)$, then $\sim x \in D$ or  $\sim y \in D$. If we had $\sim x \in D$, then  $ x \in \sim D$ and therefore $x \notin \complement \sim D=\varphi(D)$, contradicting (\ref{151124_10}). If  $\sim y \in D$ instead, since $D\subset M$, we get $\sim y \in M$ and thus $ y \in \sim M \stackrel{(\ref{ec1aa})}{=} \complement M$, also contradicting (\ref{151124_10}). It follows then that $|x\land y|_D\neq |0|_D$.
		
		Thus, by  Observation \ref{R51}, $N/D\cong C_4$.

		Observe that from  $M=\varphi(M)$ and $M \in{\cal M}(N)$ then by  Lemma \ref{lema_pre_teo_repres2} we also have $N/M\cong C_2$.
		
		\item As in the first part, $\varphi(M) \subset \varphi(D)$. We have the chain of proper inclusions: \linebreak[5] $D\subset M \subset \varphi(M)\subset \varphi(D)\subset N$. Since the complement of $\varphi(D)$ is $\sim D$, from the inclusions it is easy to see that $\{D, M\setminus D, \varphi(M)\setminus M, \varphi(D)\setminus \varphi(M),\sim D\}$  is a partition of $N$.
		
		Just as in the previous case, we can prove
		\begin{equation} \label{191124_01}
			\mbox{if } z \in M\setminus D \  \mbox{then } |z|_{D}= M\setminus D.
		\end{equation}
		Let us see now that 
		\begin{equation} \label{191124_02}
			\text{if } y \in \varphi(M)\setminus M  \text{ then } |y|_{D}= \varphi(M)\setminus M.
		\end{equation}
		Let
		\begin{equation} \label{191124_03}
			y \in \varphi(M)\setminus M.
		\end{equation}
We want to prove that  $|y|_{D}= \varphi(M)\setminus M$. Consider
		\begin{equation} \label{191124_04}
			b \in \varphi(M)\setminus M.
		\end{equation} 
		From (\ref{191124_03}) and (\ref{191124_04}) we get 
		\begin{equation} \label{191124_05}
			b,y \notin M
		\end{equation} 
		and therefore
		\begin{equation} \label{191124_06}
			b,y \notin D.
		\end{equation} 
		From (\ref{191124_05}) and Lemma \ref{L416} it follows that $b\to y, y \to b\in M$. If $b\to y, y \to b\notin D$, from (\ref{191124_06}) and Corollary \ref{C99}, using (N\ref{1112202410}),  $b\to y=b\to (b\to y)\in D$ and
		$y\to b=y\to (y\to b)\in D$, a contradiction. Then $b\to y$, $y \to b\in D$.
		Also, from (\ref{191124_03}) and (\ref{191124_04}) it follows that  $b,y \notin \sim M$, so $\sim b,\sim y \notin  M$ and with the same reasoning,  $\sim b\to \sim  y, \sim y\to \sim b \in D$. Therefore $b\in |y|_{D}$, and as a consequence, $\varphi(M)\setminus M\subseteq |y|_{D}.$ 
		
For the other inclusion, assume $b \in |y|_{D}$.  Then we have $b\to y, \sim b\to \sim y \in D$ and since $D\subset M$ then 
		\begin{equation} \label{191124_07}
			b\to y \in M
		\end{equation} 
		and 
		\begin{equation} \label{191124_08}
			\sim b\to \sim y \in M.
		\end{equation} 
		If $b\notin \varphi(M)$ then $b \in \sim M$ and therefore
		\begin{equation} \label{191124_09}
			\sim b \in M.
		\end{equation} 
		From (\ref{191124_09}) and (\ref{191124_08}) it follows that $\sim y \in M$, so $y \notin \varphi(M)$, a contradiction. Then  $b\in \varphi(M)$.
		If $b\in M$, then by (\ref{191124_07}) $y \in M$, a contradiction. Then  $b\in \varphi(M)\setminus M$  and therefore $|y|_{D} \subseteq \varphi(M)\setminus M$.
		
		From the preceding, $|y|_{D}= \varphi(M)\setminus M$ verifying condition (\ref{191124_02}).
		
		We also have in this case that $\sim(\varphi(M)\setminus M)=
		\sim(\varphi(M)\cap \complement M)\stackrel{{\rm(N\ref{111220249})}}{=} \sim \varphi(M) \cap \varphi(M)= \complement M\cap \varphi(M)= \varphi(M)\setminus M$, so $\sim|y|_{D} = |y|_{D}$ when $y \in \varphi(M)\setminus M$ using (\ref{191124_02}).
		
		Let us prove now 
		\begin{equation} \label{191124_10}
			\text{if } x \in \varphi(D)\setminus \varphi(M)  \text{ then } |x|_D= \varphi(D)\setminus \varphi(M).
		\end{equation}
		Let 
		\begin{equation} \label{191124_11}
			x \in \varphi(D)\setminus \varphi(M).
		\end{equation}
		From (\ref{191124_11}) it follows $\sim x \in \sim(\varphi(D)\setminus \varphi(M))=M\setminus D=|z|_D$, where $z\in M\setminus D$, so
		$\sim |x|_D=|\sim x|_D=|z|_D$ and therefore $|x|_D=\sim|z|_D=$ $\sim (M\setminus D)\stackrel{{\rm(N\ref{111220249})}}{=}$ \linebreak[4] $\sim M\cap \sim\complement D=
		\sim M\cap \varphi(D) = \varphi(D)\setminus \varphi(M).$
		
		Thus we have that $$N/D=\{D, M\setminus D, \varphi(M)\setminus M, \varphi(D)\setminus \varphi(M),\sim D\}$$ is a Nelson algebra with five  elements, so by Observation \ref{R51},  $N/D \cong C_5.$ 	Notice that since $M\subset \varphi(M)$ and $M \in{\cal M}(N)$ then by Lemma \ref{lema_pre_teo_repres2}  we have $N/M\cong C_3$. 
	\end{enumerate}
\end{proof}

\begin{theorem}
	\label{T51a}  {\rm (D. Brignole, \cite{brignole65nelson}, Theorem 5, pag. 3)}
	If $N$ is a non-trivial five-valued Nelson algebra, and $D\in{\cal I}(N)$,  then $N/D \cong C_i$, for some $i$, $2\leq i\leq 5$.
\end{theorem}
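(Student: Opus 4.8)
The plan is to split the argument according to whether the irreducible deductive system $D$ is itself maximal, and in each case to invoke the structural results already established for quotients. The point is that in a five-valued Nelson algebra every irreducible deductive system sits in a very rigid position relative to the maximal deductive systems above it, which is exactly what Corollaries \ref{C48a} and \ref{C42} encode; once this rigidity is available, the theorem is an assembly of earlier facts rather than a fresh computation.

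First I would dispose of the case $D\in{\cal M}(N)$. Here Theorem \ref{T51} applies directly and gives $N/D\cong C_2$ or $N/D\cong C_3$, so the conclusion holds with $i\in\{2,3\}$.

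The remaining case is $D\notin{\cal M}(N)$. By Corollary \ref{C48a} we have ${\cal I}(N)={\cal CI}(N)$, so $D$ is completely irreducible yet not maximal, and Corollary \ref{C42} then produces a \emph{unique} $M\in{\cal M}(N)$ with $D\subset M$. Since $M$ is maximal it is in particular irreducible (Lemma \ref{L41}), so Lemma \ref{L410} gives $M\subseteq\varphi(M)$, whence exactly one of $M=\varphi(M)$ or $M\subset\varphi(M)$ holds. Feeding these two subcases into Lemma \ref{lema_pre_teo_repres} yields $N/D\cong C_4$ in the first and $N/D\cong C_5$ in the second, completing the classification with $i\in\{4,5\}$.

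I do not expect a genuine obstacle at this stage: the substantive work was already carried out in Lemma \ref{lema_pre_teo_repres}, where the four- and five-element quotients were identified explicitly together with their equivalence classes, and in establishing the collapse ${\cal I}(N)={\cal CI}(N)$ for five-valued algebras. The only point deserving a word of care is verifying that the subcases $M=\varphi(M)$ and $M\subset\varphi(M)$ are exhaustive, which rests on the Kleene comparability $M\subseteq\varphi(M)$; once that dichotomy is pinned down, the bound $2\leq i\leq 5$ falls out immediately from the four results cited above.
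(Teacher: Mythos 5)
Your proof is correct and follows essentially the same route as the paper: the case $D\in\mathcal{M}(N)$ is handled by Theorem \ref{T51}, and the non-maximal case obtains the unique maximal $M\supset D$ (the paper cites Corollary \ref{C42a} where you go through Corollary \ref{C48a} and Corollary \ref{C42}, which is the same content) and then applies Lemma \ref{L410} and Lemma \ref{lema_pre_teo_repres}. No gaps.
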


\begin{proof}
	We proceed by cases on the deductive system $D\in{\cal I}(N)$.
	Assume first that $D\in{\cal M}(N)$.  By Theorem \ref{T51}, we know that in this case, $N/D \cong C_2$ or $N/D\cong C_3$. 
	
	If $D \notin {\cal M}(N)$, by Corollary \ref{C42a}, there exists a unique  $M \in{\cal M}(N)$ such that $D \subset M$. By Lemma \ref {L410}, $M \subseteq \varphi(M)$, so by Lemma \ref{lema_pre_teo_repres}, 	$N/D \cong C_4$ or $N/D\cong C_5$.
\end{proof}

\begin{definition}
	\label{Dsep} If $N$ is a Nelson  algebra, a set $\{D_i\}_{i\in I}\subseteq {\cal D}(N)$ is said to be {\emph separating family} if $\bigcap\limits_{i\in I} D_i=\{1\}$. 
\end{definition}

\begin{theorem}\label{representacion}
Let  $N$ be a non-trivial Nelson algebra, and let  $\mathcal{S}$  be a separating family of deductive systems of  $N$. Then $N$ is isomorphic to a subalgebra of the product $\prod_{D\in\mathcal{S}}N/D$. 
\end{theorem}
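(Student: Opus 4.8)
The plan is to exhibit the obvious product-of-quotients map and show it is an injective homomorphism, so that $N$ is isomorphic to its image. Concretely, I would define
\[
f\colon N\longrightarrow \prod_{D\in\mathcal{S}}N/D,\qquad f(x)=\bigl(h_D(x)\bigr)_{D\in\mathcal{S}}=\bigl(|x|_D\bigr)_{D\in\mathcal{S}},
\]
where each $h_D\colon N\to N/D$ is the canonical epimorphism. The argument then splits into two parts: verifying that $f$ is a homomorphism of Nelson algebras, and verifying that it is injective.

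For the homomorphism part, I would note that the operations on the product $\prod_{D\in\mathcal{S}}N/D$ are defined componentwise, so $f$ respects a given operation if and only if each coordinate map $h_D$ does. By Lemma \ref{L51} the relation $\equiv_D$ is a congruence of Nelson algebras, which is precisely the assertion that $h_D$ is a homomorphism onto $N/D$ with the induced operations; hence $f$ is a homomorphism. This step is routine and requires no computation beyond citing Lemma \ref{L51}.

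The substantive step is injectivity, and this is where the hypothesis that $\mathcal{S}$ is a separating family enters. Suppose $f(x)=f(y)$. Then $|x|_D=|y|_D$, that is $x\equiv_D y$, for every $D\in\mathcal{S}$; by the definition of $\equiv_D$ this means $x\to y,\ y\to x,\ \sim x\to\sim y,\ \sim y\to\sim x\in D$ for all $D\in\mathcal{S}$. Since $\bigcap_{D\in\mathcal{S}}D=\{1\}$ by Definition \ref{Dsep}, each of these four elements lies in every $D$ and therefore equals $1$. Applying quasi-identity (N\ref{1112202411}), which states that $x=y$ exactly when $x\to y=y\to x=\sim y\to\sim x=\sim x\to\sim y=1$, I conclude $x=y$, so $f$ is injective. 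An injective homomorphism is an isomorphism onto its image, and $f(N)$ is a subalgebra of $\prod_{D\in\mathcal{S}}N/D$, which gives the claim. I do not anticipate a genuine obstacle here: the only nontrivial ingredient is matching the four implications $1$-membership conditions to (N\ref{1112202411}), and the separating hypothesis is exactly what collapses those conditions to equalities in $N$.
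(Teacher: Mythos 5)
Your proposal is correct and follows essentially the same route as the paper: the coordinatewise map $x\mapsto (h_D(x))_{D\in\mathcal{S}}$, homomorphism by componentwise operations, and injectivity by combining the separating hypothesis with quasi-identity (N\ref{1112202411}). The only cosmetic difference is that you invoke the definition of $\equiv_D$ directly to get the four implications into each $D$, whereas the paper first computes $\psi(f\to g)=\mathbf{1}$ and its three companions; these are the same argument.
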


\begin{proof}
	We define a function $\psi: N\to \prod_{D\in\mathcal{S}}N/D$ by  
	$$\psi(x)(D)=h_D(x) \text{ for every } D\in {\cal S} \text{ and }x\in N.$$
	
	 We prove that  $\psi$ is an injective homomorphism.  
	 For every  $f,g\in  N$, and $D\in\mathcal{S}$ we have  $\psi(f \lor g)(D)= h_D(f\lor g)=h_D(f)\lor h_D(g)=\psi(f)(D)\lor \psi(g)(D)$. Similarly, we can prove that  $\psi(f \land g)= \psi(f)\land \psi(g),$
	$\psi(f \to g)= \psi(f)\to \psi(g),$ $\psi(\sim f)= \sim\psi(f)$, and $\psi(1)={\bf 1}$.
	
	Now we prove that  $\psi$ is injective. Indeed, if $\psi(f)=\psi(g)$ then
	$$\psi(f \to g)= \psi(f)\to \psi(g)={\bf 1},\; \psi(g \to f)= \psi(g)\to \psi(f)={\bf 1},$$
	$$\psi(\sim f \to \sim g)= \sim \psi(f)\to \sim\psi(g)={\bf 1},\; \psi(\sim g \to \sim f)= \sim \psi(g)\to \sim\psi(f)={\bf 1}.$$
	Therefore $f \to g, g \to f, \sim f \to \sim g, \sim g \to \sim f \in D$ for each $D \in {\cal S}$, and since  $\mathcal{S}$ is separating, we have 
	$$f \to g= g \to f= \sim f \to \sim g= \sim g \to \sim f =1$$
	so by (N\ref{1112202411}), $f=g$. Thus, $N\cong \psi(N)\subseteq \prod_{D\in\mathcal{S}}N/D$.
\end{proof}

\begin{theorem}
	A Nelson algebra $N$ is five-valued if and only if for every irreducible deductive system irreducible $D\in \mathcal{I}(N)$, $N/D$ is isomorphic to  $C_i$ for some $2\le i\le 5$.
\end{theorem}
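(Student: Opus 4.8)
The statement is a biconditional, so I would treat the two implications separately. The forward direction---if $N$ is five-valued then $N/D\cong C_i$ with $2\le i\le 5$ for every $D\in\mathcal{I}(N)$---is almost immediate: for non-trivial $N$ it is precisely Theorem~\ref{T51a}, and if $N$ is trivial then $N=\{1\}$ is its only deductive system, so $\mathcal{I}(N)=\emptyset$ and the condition holds vacuously (a trivial algebra also satisfies {\rm (NT$_3$)} trivially). Thus no work beyond Theorem~\ref{T51a} is needed here.

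For the converse, the plan is to realize $N$ inside a product of its quotients and then transfer the identity {\rm (NT$_3$)} back from the factors. First I would check that $\{1\}$ is a deductive system (proper, when $N$ is non-trivial): (D\ref{defds1}) is clear, and if $x,x\to y\in\{1\}$ then $1\to y=y$ by (N\ref{111220247}) forces $y=1$. Applying Theorem~\ref{THR} with $D=\{1\}$, for each $a\neq 1$ there is a completely irreducible deductive system $D_a$ with $\{1\}\subseteq D_a$ and $a\notin D_a$; hence $\bigcap_{D\in\mathcal{CI}(N)}D=\{1\}$. By Observation~\ref{Rnuev} we have $\mathcal{CI}(N)\subseteq\mathcal{I}(N)$, so $\mathcal{I}(N)$ is a separating family in the sense of Definition~\ref{Dsep}.

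With a separating family in hand, Theorem~\ref{representacion} embeds $N$ as a subalgebra of $\prod_{D\in\mathcal{I}(N)}N/D$. By hypothesis each factor $N/D$ is isomorphic to some $C_i$ with $2\le i\le 5$, and by Observation~\ref{R32} each such $C_i$ is (isomorphic to) a subalgebra of $C_5$, where {\rm (NT$_3$)} holds; hence every factor satisfies {\rm (NT$_3$)}. Since {\rm (NT$_3$)} is an equation, it is preserved under products and under subalgebras, so it holds in $\prod_{D\in\mathcal{I}(N)}N/D$ and therefore in $N$. Consequently $N$ is five-valued, completing the converse.

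The only step carrying genuine content is the separation claim, and that is provided by the Zorn-type argument behind Theorem~\ref{THR}; once the separating family $\mathcal{I}(N)$ is available, the remainder is a routine application of the fact that identities pass to subalgebras and products, together with the observation from Observation~\ref{R32} that $C_2,C_3,C_4$ all embed into $C_5$. I would therefore expect the write-up to be short, with the representation theorem and Theorem~\ref{T51a} doing essentially all the heavy lifting.
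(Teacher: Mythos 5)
Your proposal is correct and follows essentially the same route as the paper: the forward direction is Theorem~\ref{T51a}, and the converse embeds $N$ into $\prod_{D\in\mathcal{I}(N)}N/D$ via Theorem~\ref{representacion} after checking that $\mathcal{I}(N)$ is separating, then transfers the identity {\rm (NT$_3$)} from $C_5$ through subalgebras and products. The only cosmetic difference is that you obtain the separating family directly from Theorem~\ref{THR} applied to $\{1\}$, whereas the paper cites Lemma~\ref{L47} (which rests on the same Zorn-type argument), and you additionally dispose of the trivial case explicitly.
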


\begin{proof} 
	If the Nelson algebra $N$ is five-valued and  $D\in\mathcal{I}(N)$, by Theorem \ref{T51a}, we know that  $N/D\cong C_i$, for some $2\leq i\leq 5$.
	
	For the converse, assume that for every irreducible deductive system $D$ in $N$ we have that   $N/D$ is  isomorphic to  $C_i$ for some $2\le i\le 5$. Since $C_5$ is a five-valued Nelson algebra, all of its subalgebras, the product of them, and any subalgebra of this product, are also five-valued. By Lemma \ref{L47} and Observation \ref{Rnuev},   ${\cal I}(N)$ is a separating family. 	If $N$ is a non-trivial  Nelson algebra,  then by  Theorem \ref{THR}, ${\cal I}(N)\neq \emptyset$. Thus, by Theorem \ref{representacion}, $N$ is isomorphic to a subalgebra of the product $\prod_{D\in\mathcal{I}(N)}N/D$, and each one of the factors in the product is a five-valued Nelson algebra. Therefore, $N$ is five-valued as well.
\end{proof}

Let ${\cal F}(N)= C_5^{{\cal I}(N)}$ be the set of all the functions from  ${\cal I}(N)$ to $C_5$, with the operations defined pointwise. ${\cal F}(N)$ is a five-valued Nelson algebra, with the top element ${\bf 1}$ defined as ${\bf 1}(D)=1$ for every $D\in {\cal I}(N)$.  Notice that we can assume that for each $D\in {\cal I}(N)$, $h_{D}$ is a homomorphism from $N$ to $C_5$. 
Thus $N$ is isomorphic to a subalgebra of the  product $\prod_{D\in\mathcal{I}(N)}N/D$ (see the previous proof), which can be regarded as a  subalgebra of ${\cal F}(N)= C_5^{{\cal I}(N)}$.
The following result obtains.
\begin{theorem} {\rm (Representation Theorem)}
	{\rm (D. Brignole, \cite{brignole65nelson}, Theorem 6, pag. 5)}
	\label{TTRep}  If $N$ is a non-trivial five-valued Nelson algebra, then $N$ is isomorphic to the subalgebra $N_1=\{\psi(f)\}_{f \in N}$ of ${\cal F}(N)=C_5^{{\cal I}(N)}$.
\end{theorem}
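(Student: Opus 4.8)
The plan is to obtain Theorem~\ref{TTRep} as an assembly of the subdirect representation of Theorem~\ref{representacion}, applied to the separating family $\mathcal{S}={\cal I}(N)$, together with the fact that each quotient $N/D$ can be identified with a subalgebra of $C_5$. Since the technical content has already been established in the preceding results, the argument is essentially a matter of fitting the pieces together and checking coherence.

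First I would verify that $\mathcal{S}={\cal I}(N)$ is an admissible choice in Theorem~\ref{representacion}, i.e. that it is a separating family and is non-empty when $N$ is non-trivial. For the latter, since $N$ is non-trivial, $\{1\}$ is a proper deductive system, so by Theorem~\ref{THR} there is at least one completely irreducible deductive system, which lies in ${\cal I}(N)$ by Observation~\ref{Rnuev}. For the separating property, I would apply Lemma~\ref{L47} to the proper deductive system $\{1\}$: it is an intersection of completely irreducible deductive systems, each belonging to ${\cal I}(N)$ by Observation~\ref{Rnuev}; hence $\bigcap_{D\in{\cal I}(N)}D=\{1\}$.

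Next, for each $D\in{\cal I}(N)$ I would make precise the identification of $N/D$ with a subalgebra of $C_5$. By Theorem~\ref{T51a}, $N/D\cong C_i$ for some $2\le i\le 5$, and by Observation~\ref{R32} each such $C_i$ is isomorphic to one of the subalgebras $S_2,S_3,S_4,S_5$ of $C_5$. Fixing such an embedding $\iota_D\colon N/D\hookrightarrow C_5$ and composing it with the canonical epimorphism, I regard $h_D$ as a homomorphism from $N$ into $C_5$, exactly as anticipated in the paragraph preceding the statement. This lets me define $\psi\colon N\to{\cal F}(N)=C_5^{{\cal I}(N)}$ by $\psi(f)(D)=h_D(f)$, landing in ${\cal F}(N)$ rather than in the abstract product $\prod_{D}N/D$.

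Finally, $\psi$ is a homomorphism because the operations of ${\cal F}(N)$ are computed pointwise and each $h_D$ is a homomorphism, and it is injective by the argument of Theorem~\ref{representacion}: if $\psi(f)=\psi(g)$ then $h_D(f)=h_D(g)$ for every $D$ (using that each $\iota_D$ is injective), so $f\to g,\ g\to f,\ \sim f\to\sim g,\ \sim g\to\sim f\in D$ for all $D\in{\cal I}(N)$; since the family is separating these implications all equal $1$, and (N\ref{1112202411}) forces $f=g$. Therefore $N\cong\psi(N)=N_1\subseteq{\cal F}(N)$. The only genuinely delicate point is bookkeeping: one must commit to a fixed embedding $\iota_D$ for each $D$ so that $\psi$ is well defined with codomain $C_5^{{\cal I}(N)}$ (any choice works, and in fact $\iota_D$ is pinned down uniquely since the chains $C_i$ have no non-trivial Nelson automorphisms), and one must confirm that viewing the abstract product $\prod_{D}N/D$ inside $C_5^{{\cal I}(N)}$ does not disturb injectivity, which it does not, precisely because each $\iota_D$ is injective.
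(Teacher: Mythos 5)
Your proposal is correct and follows essentially the same route as the paper: the paper also obtains the result by applying Theorem~\ref{representacion} to the separating family ${\cal I}(N)$ (separating by Lemma~\ref{L47} and Observation~\ref{Rnuev}) and then regarding each quotient $N/D\cong C_i$ of Theorem~\ref{T51a} as a subalgebra of $C_5$, so that the subdirect product sits inside ${\cal F}(N)=C_5^{{\cal I}(N)}$. Your extra care about fixing the embeddings $\iota_D$ is a welcome precision but not a departure from the paper's argument.
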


It is well known that:
\begin{lemma}
	\label{L61}
	If $N$ and $N'$ are Nelson algebras, $h:N \to N'$ an homomorphism, $G\subseteq N$ such that $S(G) = N$, then $S(h(G)) =h(N)$. That is, if $G$ generates $N$ then $h(G)$ generates $h(N)$. If $h$ is an epimorphism, then $S(h(G))=N'$.
\end{lemma}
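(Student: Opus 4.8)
The plan is to prove the equality $S(h(G))=h(N)$ by a double inclusion, relying on the standard fact from universal algebra that for any subset $X$ of a Nelson algebra, $S(X)$ coincides with the set of all elements that can be written as a term built from the elements of $X$ using the operations $1,\sim,\land,\lor,\to$; equivalently, $S(X)$ is the smallest subalgebra containing $X$. Since the statement is purely structural, none of the specific Nelson axioms will be needed, only that $h$ preserves every operation.

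First I would observe that the homomorphic image $h(N)$ is a subalgebra of $N'$, because $h$ preserves each operation and the constant $1$. As $h(G)\subseteq h(N)$ and $S(h(G))$ is by definition the least subalgebra of $N'$ containing $h(G)$, the inclusion $S(h(G))\subseteq h(N)$ is immediate. For the reverse inclusion, take an arbitrary $y\in h(N)$, say $y=h(x)$ with $x\in N$. Since $S(G)=N$, the element $x$ can be written as a term $t(g_1,\dots,g_k)$ with $g_1,\dots,g_k\in G$. Using that $h$ is a homomorphism (formally, by induction on the structure of $t$, since $h$ commutes with each operation), we obtain $y=h(x)=h(t(g_1,\dots,g_k))=t(h(g_1),\dots,h(g_k))$, which is a term built from elements of $h(G)$ and therefore lies in $S(h(G))$. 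Hence $h(N)\subseteq S(h(G))$, and the two inclusions give $S(h(G))=h(N)$.

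The final assertion is then immediate: if $h$ is an epimorphism, then $h(N)=N'$, so $S(h(G))=N'$. The only point that requires any care is the inductive argument showing that $h$ commutes with arbitrary terms, but this is precisely the routine content of $h$ being a homomorphism, so I do not anticipate any genuine obstacle; the lemma is essentially a bookkeeping fact about generation and is recorded here because it will be applied repeatedly when transporting generating sets along the canonical epimorphisms $h_D$.
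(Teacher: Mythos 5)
Your proof is correct. The paper states this lemma without proof, introducing it only with ``It is well known that:'', and your argument --- double inclusion via the description of $S(X)$ as the set of term values over $X$, together with the fact that a homomorphism commutes with terms --- is precisely the standard justification being tacitly invoked.
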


\begin{definition}
	Given an homomorphism of Nelson algebras $h:N\to N'$, the \emph{kernel} of $h$ is the set $Ker(h)=\set{x\in N: h(x)=1}$. This set is always a deductive system. 
\end{definition}

\begin{lemma}
	\label{L62} 
	If $N$ and $N'$ are Nelson algebras and $h:N\to N'$ is an epimorphism, then $N'\cong N/Ker(h)$.
\end{lemma}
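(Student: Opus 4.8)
The plan is to carry out the standard first isomorphism theorem, adapted to the congruence $\equiv_D$ of Lemma \ref{L51} taken with $D = Ker(h)$ (which is legitimate, since $Ker(h)$ is always a deductive system). Writing $D = Ker(h)$, I would first propose the induced map $\bar h : N/D \to N'$ by $\bar h(|x|_D) = h(x)$, and the whole argument then reduces to checking that this map is a well-defined isomorphism.

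The key technical step — and the only place where any real work happens — is to show that $\bar h$ is simultaneously well-defined and injective, which turn out to be the two halves of a single equivalence. Indeed, by the definition of $\equiv_D$ we have $|x|_D = |y|_D$ exactly when $x \to y$, $y \to x$, $\sim x \to \sim y$, and $\sim y \to \sim x$ all lie in $D = Ker(h)$; applying $h$ and using that it is a homomorphism, this holds if and only if $h(x) \to h(y)$, $h(y) \to h(x)$, $\sim h(x) \to \sim h(y)$, and $\sim h(y) \to \sim h(x)$ are all equal to $1$ in $N'$. By the characterization (N\ref{1112202411}), the latter is in turn equivalent to $h(x) = h(y)$. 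Thus $|x|_D = |y|_D$ if and only if $h(x) = h(y)$, which at once yields that $\bar h$ does not depend on the chosen representative (well-definedness) and that distinct classes have distinct images (injectivity). The forward direction of this chain uses $x \to x = 1$ from (N\ref{nelsonax6}) to see, for example, that $h(x) = h(y)$ forces $h(x) \to h(y) = h(x) \to h(x) = 1$, so $x \to y \in Ker(h)$, and similarly for the other three conditions.

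It then remains to verify two routine facts. Surjectivity follows immediately from the hypothesis that $h$ is an epimorphism: given $y' \in N'$, pick $x \in N$ with $h(x) = y'$, so $\bar h(|x|_D) = y'$. That $\bar h$ is a homomorphism is inherited operation by operation from $h$ together with the definition of the induced operations on $N/D$; for instance $\bar h(|x|_D \to |y|_D) = \bar h(|x \to y|_D) = h(x \to y) = h(x) \to h(y) = \bar h(|x|_D) \to \bar h(|y|_D)$, and analogously for $\land$, $\lor$, $\sim$, and the constant $1$. Combining these, $\bar h$ is a bijective homomorphism, hence an isomorphism, so $N' \cong N/Ker(h)$. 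I anticipate no genuine obstacle: the substance is carried entirely by (N\ref{1112202411}) and Lemma \ref{L51}, and everything else is bookkeeping.
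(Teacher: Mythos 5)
Your proof is correct. The paper states Lemma \ref{L62} without proof (it is presented as a well-known fact), so there is no argument in the text to compare against; your argument is exactly the standard first isomorphism theorem for the congruence $\equiv_{Ker(h)}$ of Lemma \ref{L51}, with the well-definedness/injectivity equivalence correctly reduced to (N\ref{1112202411}) and (N\ref{nelsonax6}), and it fills the omitted proof completely.
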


\begin{lemma}
	\label{LM1}
	If $N$ is a Nelson algebra and $h:N \to C_i$, $2\leq i \leq 5$ is an epimorphism, then $D=Ker(h)=h^{-1}(\{1\})\in {\cal I}(N)$. 
\end{lemma}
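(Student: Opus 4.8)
The plan is to prove membership in $\mathcal{I}(N)$ through the characterization supplied by Corollary \ref{equivalenciasP1}, namely $\mathcal{I}(N)=\set{P\in X(N)\mid P\subseteq\varphi(P)}$. Accordingly, it suffices to show that $D=Ker(h)$ is a prime filter satisfying $D\subseteq\varphi(D)$. From the definition of the kernel we already know that $D$ is a deductive system, hence a filter, so (F\ref{F1})--(F\ref{F3}) need no further work. To see that $D$ is proper, I would use that $h$ is an epimorphism onto $C_i$ with $i\geq 2$: the codomain is non-trivial, so some $x\in N$ satisfies $h(x)\neq 1$, witnessing $D\neq N$.

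The crucial step is primality, and this is exactly where the hypothesis that the image is the \emph{chain} $C_i$ enters. Suppose $a\lor b\in D$. Since $h$ preserves $\lor$, we have $h(a)\lor h(b)=h(a\lor b)=1$ in $C_i$. Because $C_i$ is linearly ordered, $\lor$ is the maximum, so $h(a)\lor h(b)=1$ forces $h(a)=1$ or $h(b)=1$, i.e.\ $a\in D$ or $b\in D$. Thus $D\in X(N)$. I expect this to be the only genuinely load-bearing point of the argument; without linearity of the target one could not conclude primality, and indeed this is precisely the feature of $C_2,\dots,C_5$ being used.

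Finally I would verify $D\subseteq\varphi(D)$. Unwinding the definition, $\varphi(D)=\complement\sim D=\set{x\in N:\sim x\notin D}$, so for $d\in D$ I compute $h(\sim d)=\sim h(d)=\sim 1=0\neq 1$ in $C_i$, whence $\sim d\notin D$ and therefore $d\in\varphi(D)$; alternatively this follows at once from Lemma \ref{L43res}, since $D$ is a proper deductive system. Having established that $D$ is a prime filter with $D\subseteq\varphi(D)$, Corollary \ref{equivalenciasP1} delivers $D\in\mathcal{I}(N)$, completing the proof. (A more roundabout route would instead invoke Lemma \ref{L62} to get $N/D\cong C_i$ and argue via the correspondence between deductive systems of $N$ containing $D$ and those of $N/D$, but the direct verification above is shorter and avoids any appeal to the quotient structure.)
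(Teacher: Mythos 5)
Your proposal is correct and follows essentially the same route as the paper: establish that $D=Ker(h)$ is a prime filter with $D\subseteq\varphi(D)$ and then invoke Lemma \ref{L49} (which you access through its packaging as Corollary \ref{equivalenciasP1}). The only cosmetic difference is that where the paper cites the well-known fact that $h^{-1}$ of the prime filter $\{1\}\subseteq C_i$ is prime, you unwind that fact directly using the linearity of $C_i$, which is a perfectly valid (and slightly more self-contained) way to make the same point.
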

\begin{proof} Since $N$ and $C_i$ are bounded distributive lattices, then it is well known that since $\{1\}\in X(C_i)$
	and $h$ is in particular a lattice epimorphism, then (1) $D=Ker(h)\in X(N)$. Let us see that (2) $D\subseteq \varphi(D)$. Indeed, let $x\in D$ that is $h(x)=1$ and suppose that $x\notin \varphi(D)$, then $x\in \sim D$
	and therefore $\sim x \in D$, so $1=h(\sim x)=\sim h(x)=\sim 1=0$, a contradiction. From (1) and (2) it follows by Lemma \ref{L49}, that $D\in {\cal I}(N)$.
\end{proof}

\begin{lemma} \label{Lhom1}
	If $N,N_1,N_2$ are Nelson algebras, $h_i$ an epimorphism from $N$ to $N_i$, $i=1,2$ and $Ker (h_1) \subseteq Ker (h_2)$, then there exists an epimorphism $h$ from $N_1$ to $N_2$ such that $h\circ h_1=h_2$. If $Ker (h_1)= Ker ( h_2)$ then $N_1$ is isomorphic to $N_2$.
\end{lemma}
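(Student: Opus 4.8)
The plan is to define the comparison map $h$ by the only formula compatible with the requirement $h\circ h_1 = h_2$, and to use the kernel inclusion precisely to make that formula well defined. Write $D_1 = Ker(h_1)$ and $D_2 = Ker(h_2)$, so that $D_1\subseteq D_2$ by hypothesis. The central observation I would establish first is that equality of images under $h_i$ is exactly the congruence cut out by $D_i$: since $h_1$ is a homomorphism, applying (N\ref{1112202411}) inside $N_1$ shows that $h_1(x) = h_1(y)$ holds if and only if $h_1(x\to y) = h_1(y\to x) = h_1(\sim x\to\sim y) = h_1(\sim y\to\sim x) = 1$, that is, if and only if $x\to y,\ y\to x,\ \sim x\to\sim y,\ \sim y\to\sim x\in D_1$; in the language of Lemma \ref{L51} this says precisely $x\equiv_{D_1} y$. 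The same equivalence holds for $h_2$ with $D_2$.

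With this in hand I would define $h\colon N_1\to N_2$ as follows. Because $h_1$ is surjective, every element of $N_1$ is $h_1(x)$ for some $x\in N$, and I set $h(h_1(x)) = h_2(x)$. The hypothesis is what makes this unambiguous: if $h_1(x) = h_1(y)$ then $x\equiv_{D_1} y$, so the four implication witnesses lie in $D_1\subseteq D_2$, giving $x\equiv_{D_2} y$ and hence $h_2(x) = h_2(y)$. Thus $h$ is a genuine function, and $h\circ h_1 = h_2$ by construction.

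It then remains to check that $h$ is an epimorphism, which is routine. For each binary operation $*$ among $\lor,\land,\to$, taking preimages $x,y$ of given elements of $N_1$ and using that $h_1$ and $h_2$ are both homomorphisms yields $h(h_1(x)*h_1(y)) = h(h_1(x*y)) = h_2(x*y) = h_2(x)*h_2(y) = h(h_1(x))*h(h_1(y))$; the cases of $\sim$ and of the constant $1$ are handled the same way. Surjectivity is immediate, since any $w\in N_2$ equals $h_2(x) = h(h_1(x))$ for a suitable $x$, as $h_2$ is onto.

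Finally, for the isomorphism assertion I would assume $D_1 = D_2$, so that the congruences $\equiv_{D_1}$ and $\equiv_{D_2}$ coincide. The well-definedness argument then runs backwards: $h(h_1(x)) = h(h_1(y))$ means $h_2(x) = h_2(y)$, whence $x\equiv_{D_2} y$, hence $x\equiv_{D_1} y$, hence $h_1(x) = h_1(y)$. This gives injectivity, and together with the surjectivity already obtained, $h$ is an isomorphism $N_1\cong N_2$. I do not anticipate a genuine obstacle here: once the map is written down every step is forced, and the only point requiring care is the two-way translation between equality of $h_i$-images and membership of the four implication witnesses in $D_i$, which rests entirely on (N\ref{1112202411}) together with the fact that each $h_i$ preserves $\to$ and $\sim$.
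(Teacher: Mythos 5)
Your proof is correct and follows essentially the same route as the paper's: define $h(h_1(x))=h_2(x)$, use the kernel inclusion together with the four implication witnesses of (N\ref{1112202411}) to see that $h$ is well defined, and verify the homomorphism identities by passing to preimages. The only divergence is at the very end, where you establish injectivity of $h$ directly from $Ker(h_1)=Ker(h_2)$ rather than citing Lemma \ref{L62}; this is a harmless (and slightly more self-contained) variant.
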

\begin{proof}
	Let $y\in N_1$. Since $h_1$ is an epimorphism, there exists an $x\in N$ such that $h_1(x)=y$. We define $h(y)$ as $h_2(x)$. To see that $h$ is well-defined, take $x'\in N$ such that $h_1(x')=y$. Then we have that the elements $x\to x', x'\to x, \sim x\to\sim x'$ and $\sim x'\to\sim x$ are all in $Ker(h_1)\subseteq Ker(h_2)$, so $h_2$ calculated in all of them is $1$, so $h_2(x)=h_2(x')$. It's clear from the construction that $h\circ h_1=h_2$, and $h$ is surjective. 
	
	It remains to  check that $h$ is a homomorphism. We prove it preserves the implication operation, the rest can be computed in a similar fashion. 
	
	Given $y, y'\in N_1$, let $x, x'$ and $z\in N$ be such that $h_1(x)=y, h_1(x')=y'$, and $h_1(z)=y \to y'$, respectively.   Then $h_1(x\to x')=h_1(x)\to h_1(x')=h_1(z)$. Since $Ker(h_1)\subseteq Ker(h_2)$, we have that $h(y\to y')=h_2(z)=h_2(x\to x')=h_2(x)\to h_2(x')=h(y)\to h(y')$.
	
	Finally, if $Ker(h_1)=Ker(h_2)$, by Lemma \ref{L62} we have that $N_1\cong N/Ker(h_1)= N/Ker(h_2)\cong N_2$.
\end{proof}

	The lattice structure of a Nelson algebra can be determined from the poset of its prime filters, using the Priestley representation. 	We briefly summarize here the part of this representation that we will use, following \cite{cignoli86class}. 	Since we will deal only with finite algebras when determining the free finitely generated five-valued Nelson algebras, we need not worry about the topology of their Priestley representation.
	
	The well-known Priestley duality is simplified in the finite case, and we have a duality between the category \lat\ of finite distributive lattices and their homomorphisms and the category \pos\ of finite posets and the order preserving functions between them. The functors that realize this duality are ${\sf X}:\lat\to\pos$ and ${\sf D}:\pos\to\lat$ defined as follows: if $L$ is a finite distributive lattice, let  $X(L)$ be the set of the  prime filters of $L$, and we set  $\fx(L)=(X(L), \subseteq)$. If $h:L\to L'$ is a lattice homomorphism, $\fx(h):\fx(L')\to\fx(L)$ is given by $\fx(h)(P)=h\mnu(P)$ for every $P\in \fx(L)$. Given a finite poset $S$, let $D(S)$ be the set  of all the decreasing subsets of $S$. Then $\fd(S)=(D(S), \cap,\cup,\emptyset, S)$ is a finite distributive lattice, and if $f:S\to S'$ is an order preserving function,   $\fd(f):\fd(S')\to \fd(S)$ given by $\fd(f)(U)=f\mnu(U)$ for all decreasing subsets $U$ of $S'$ is a lattice homomorphism.

	\begin{definition}
		We define \textit{Nelson spaces}, as triples $(X, \leq,\varphi)$ such that $X$ is a  finite set, $\leq$ is a partial order over $X$ and $\varphi$ is an involutive function from $X$ to $X$ that  is also a dual order isomorphism (this is, $\varphi(\varphi(x))=x$ and $x\le y$ if and only if $\varphi(y)\le\varphi(x)$) satisfying the conditions:
		\begin{itemize}
			\item For every $x\in X$, $x\leq\varphi(x)$ or $\varphi(x)\leq x$.
			\item (Interpolation property, A. Monteiro, {\rm \cite{monteiro63construction}}) If $x \leq \varphi(x)$, $x \leq \varphi(y)$,$y \leq \varphi(x)$, and $y \leq \varphi(y)$, then there exists $z\in X$ such that $x \leq z \leq\varphi(x)$ and $y \leq z \leq\varphi(y)$.
		\end{itemize}
	\end{definition}
	We consider a  category \Ns\  where the objects are Nelson spaces and the morphisms from a Nelson space $(X,\le,\varphi_X)$ to another, $(Y,\le,\varphi_Y)$ are the order preserving functions $f:X\to Y$ such that
	\begin{equation}\label{varphi}
	f\circ\varphi_X=\varphi_Y\circ f.
	\end{equation} 
	
	The functors \fx\ and \fd\ from above can be extended to yield a duality between the categories \textbf{N} and \Ns. If $A$ is a finite Nelson algebra, then $(X(A),\subseteq,\varphi_A)$ is a Nelson space, and if $(X,\le,\varphi)$ is a Nelson space, then $(D(X),X,\sim,\cap,\cup,\to)$ is a Nelson algebra if we define $\sim U=X\setminus\varphi(U)$ and $U\to V=X\setminus (U\cap \varphi(U)\cap(X\setminus V)]$ for all $U, V\in D(X)$, where for any subset $Y$ of $X$, $(Y]$ indicates the decreasing set generated by $Y$, that is, $(Y]=\set{x\in X| x\le y \text{ for some } y \in Y}$. Then we can define the functors $\fx_N:\textbf{N}\to \textbf{Ns}$ by $\fx(A)=(X(A),\subseteq,\varphi_A)$ for every finite Nelson algebra $A$, and $\fd_N(X)=$ \linebreak[3] $(D(X),X,\sim,\cap,\cup,\to)$ for every Nelson space $X$. These functors act on morphisms as \fx\ and \fd, respectively, and they give a dual adjunction between the categories \textbf{N} and \Ns.
	
	\begin{definition}
		Given a Nelson space $(X,\leq,\varphi)$, let $X^+=\set{x\in X|x\le\varphi(x)}$ and $X^-=\set{x\in X|\varphi(x)\le x}$. From the definition of Nelson space, we have that $X^+\cup X^-=X$, and by Lemmas \ref{L49} and \ref{L410}, if $N$ is a Nelson algebra, then $X(N)^+={\mathcal{I}}(N)$.
	\end{definition} 

\begin{lemma} \label{preservesPlus}
	If $f:X\to Y$ is a Nelson space morphism, and $x\in X^+$, then $f(x)\in Y^+$.
\end{lemma}
\begin{proof}
	If $x\le\varphi_X(x)$, then $f(x)\le f(\varphi_X(x))=\varphi_Y(f(x))$, so $f(x)\in Y^+$.
\end{proof}

\section{Free Algebras}\label{S4}
If $N$ is a Nelson algebra, and $X\subseteq N$, we denote with $S(X)$ the Nelson subalgebra  of $N$ generated by $X$.

The notion of free algebra is defined in the usual way, that is:
\begin{definition}
	Given a cardinal $\beta>0$ we say that  a five-valued Nelson algebra  $F$ has a set of  $\beta$ free generators, if
	\begin{enumerate}[\normalfont(L1)]
		\item $F$ contains a subset $G$ of cardinality $\beta$ such that $S(G) =F$,
		\item Every application $f:G \to N$, where $N$ is any five-valued Nelson algebra, can be extended to a unique Nelson algebra homomorphism $\bar{f}$, from $F$ to $N$.
	\end{enumerate}
	
	Under these conditions we say that $G$ is a set of free generators of $F$ and a five-valued Nelson algebra is said to be \emph{free} if it has a set of free generators. To show explicitly the cardinal $\beta$ we write $F=F({\beta})$.
\end{definition}

Since the notion of five-valued Nelson algebra is defined through identities, a result of G. Birkhoff \cite{birkhoff95lattice} ensures that there exists the free five-valued Nelson algebra  $F({\beta})$  with a free set of generators $G=\{g_i\}_{i\in I}$ of cardinality $\beta$, and it is unique up to isomorphisms. If $n$ is a natural number, $n\geq 1$, we are going to determine the five-valued Nelson algebra with a set of $n$  free generators $G=\{g_1,g_2,\ldots,g_n\}$, and we will denote it with $F(n)$.

 Since $F(n)$ is nontrivial, then by Theorem \ref{CIdeductivesystem}, ${\cal I}(F(n))\neq \emptyset$.
By Observation \ref{R32}, the only Nelson subalgebras of $C_5$, are $S_2=\{0,1\}\cong C_2$, $S_3=\{0,\cc,1\}\cong C_3 $, $S_4=\{0,\uc,\tc,1\}\cong C_4$, and $S_5=C_5$. 

If $f\in C_5^{G}$, that is, $f$ is a function from $G$ to $C_5$,  there exists an unique homomorphism $\bar{f}:F(n)\to C_5$ such that $\bar{f}(g)=f(g)$ for all $g\in G$, so by Lemma \ref{L61}, $S(\bar{f}(G))=\bar{f}(F(n))$ and since $S(\bar{f}(G))$ is a subalgebra of $C_5$, then $S(\bar{f}(G))\cong S_i$, for some $i$, $2\leq i\leq 5$. By Lemma \ref{LM1}, $Ker(\bar{f})\in {\cal I}(F(n))$. Writing $\alpha (f)=P_f= Ker (\bar{f})$, we have that \linebreak[4] $\alpha:C_5 ^G\to {\cal I}(F(n))$.

\begin{remark}\label{primeFiltersC5}
	The prime filters of $C_5$ are $[1), [\tc), [\cc)$, and  $[\uc)$. Of these, only $[1)$ and $ [\tc)$ are irreducible deductive systems, with $\varphi_{C_5}([1))=[\uc)$, and $\varphi_{C_5}([\tc))=[\cc)$.
\end{remark}

\begin{lemma}\label{determination}
If $N$ is a non-trivial five-valued Nelson algebra $h:N\to C_5$ a homomorphism, $D=Ker(h)$, and $M$ is the only maximal deductive system containing $D$, then we have: $D=h\mnu([1))$, $M=h\mnu([\tc))$, $\varphi(M)=h\mnu([\cc))$, and  $\varphi(D)=h\mnu([\uc))$.

As a consequence,
 $D=h\mnu([1))$,
 $M\setminus D=h\mnu(\set{\tc}))$, 
 $\varphi(M)\setminus M=h\mnu(\set{\cc})$,\linebreak[4]
 $\varphi(D)\setminus\varphi(M)=h\mnu(\set{\uc})$, and
 $N\setminus \varphi(D)=h\mnu(\set{0}))$.
\end{lemma}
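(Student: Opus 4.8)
The plan is to compute each of the four deductive systems as the preimage under $h$ of the corresponding prime filter of $C_5$, reading off the relevant images from Observation \ref{primeFiltersC5}. The equality $D=h\mnu([1))$ is immediate, since $[1)=\set{1}$ and $D=Ker(h)=h\mnu(\set{1})$ by definition. The two identities involving $\varphi$ will both come from one general fact, so the whole lemma reduces to the single substantial claim that $M=h\mnu([\tc))$; once the four principal identities are in hand, the ``as a consequence'' list follows by intersecting and complementing inside $C_5$.

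First I would record that $\varphi$ commutes with preimages: for every prime filter $P$ of $C_5$, $\varphi(h\mnu(P))=h\mnu(\varphi_{C_5}(P))$. Indeed, writing $\varphi(Q)=\complement\sim Q$ and using that $h$ preserves $\sim$, both sides unfold to $\set{x\in N:\sim x\notin h\mnu(P)}$; this is purely arithmetic and needs neither finiteness nor the duality machinery of Section \ref{S3}. Applying it to $P=[1)$ with $\varphi_{C_5}([1))=[\uc)$ gives $\varphi(D)=h\mnu([\uc))$, and applying it to $P=[\tc)$ with $\varphi_{C_5}([\tc))=[\cc)$ will give $\varphi(M)=h\mnu([\cc))$ as soon as $M=h\mnu([\tc))$ is established.

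For the key identity, set $M'=h\mnu([\tc))$. Since $[\tc)$ is a deductive system of $C_5$ (Observation \ref{primeFiltersC5}) and $h$ is a homomorphism, $M'$ is a deductive system; as $[1)\subseteq[\tc)$ we get $D\subseteq M'$; and since $h(0)=0\notin[\tc)$, $M'\neq N$ is proper. I would then distinguish whether the inclusion $D\subseteq M'$ is proper, noting that $M'\setminus D=h\mnu(\set{\tc})$ is nonempty exactly when $\tc\in h(N)$. If $\tc\notin h(N)$, then $D=M'$, and since the only subalgebras of $C_5$ avoiding $\tc$ are $S_2\cong C_2$ and $S_3\cong C_3$ (Observation \ref{R32}), the image $h(N)\cong N/D$ (Lemma \ref{L62}) is isomorphic to $C_2$ or $C_3$; by Lemma \ref{lema_pre_teo_repres} (with Theorem \ref{T43} supplying a maximal system above any non-maximal irreducible one) a non-maximal $D\in\mathcal{I}(N)$ would have quotient $C_4$ or $C_5$, so here $D$ must be maximal and hence $M=D=M'$. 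If instead $\tc\in h(N)$, then $D\subsetneq M'$ with $M'$ proper, and since $D\in\mathcal{I}(N)=\mathcal{CI}(N)$ (Corollary \ref{C48a}), Theorem \ref{T43} forces $M'$ to be maximal; the uniqueness of the maximal deductive system above $D$ (Corollary \ref{C42}) then yields $M'=M$.

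With $D,M,\varphi(M),\varphi(D)$ identified as the preimages of $[1),[\tc),[\cc),[\uc)$, the remaining equalities follow by taking preimages of set differences in $C_5$: $M\setminus D=h\mnu([\tc)\setminus[1))=h\mnu(\set{\tc})$, $\varphi(M)\setminus M=h\mnu(\set{\cc})$, $\varphi(D)\setminus\varphi(M)=h\mnu(\set{\uc})$, and $N\setminus\varphi(D)=h\mnu(C_5\setminus[\uc))=h\mnu(\set{0})$. I expect the main obstacle to be precisely the identification $M=h\mnu([\tc))$: the verification that $M'$ is a proper deductive system over $D$ is routine, but one must handle uniformly the two regimes according to whether $\tc$ lies in the image, and in the generic regime invoke Theorem \ref{T43} to upgrade the proper containment $D\subsetneq M'$ into maximality of $M'$.
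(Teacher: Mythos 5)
Your proof is correct, and it reaches the key identity $M=h\mnu([\tc))$ by a genuinely different route than the paper. The paper also reduces everything to this one claim, but it establishes maximality of $h\mnu([\tc))$ directly and uniformly: it first notes via Lemma \ref{preservesPlus} that $h\mnu([\tc))\in\mathcal{I}(N)$, and then, for any $x\notin h\mnu([\tc))$, reads off from the implication table of $C_5$ that $h(x)\in\set{0,\uc,\cc}$ forces $h(x\to y)=1$ for all $y$, so $D(h\mnu([\tc)),x)=N$; maximality follows with no case distinction, and Corollary \ref{C42} then pins down $M$. You instead only verify that $M'=h\mnu([\tc))$ is a proper deductive system containing $D$ and delegate maximality to the structure theory, splitting on whether $\tc\in h(N)$: in one regime you use the quotient classification of Lemma \ref{lema_pre_teo_repres} to force $D$ itself to be maximal, and in the other you use Corollary \ref{C48a} and Theorem \ref{T43} to upgrade $D\subsetneq M'$ to maximality of $M'$. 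Both arguments work; the paper's is shorter and avoids the case split at the cost of an explicit table computation, while yours stays entirely inside the already-proved deductive-system theory. Your hand proof that $\varphi$ commutes with $h\mnu$ on prime filters is also a perfectly good substitute for the paper's appeal to the Nelson-space morphism identity $f\circ\varphi_X=\varphi_Y\circ f$ together with Observation \ref{primeFiltersC5}. The only point worth making explicit is that your case analysis uses $D\in\mathcal{I}(N)$ (to invoke Corollary \ref{C48a}, Theorem \ref{T43}, and the quotient classification); this is true but should be recorded, e.g.\ by applying Lemma \ref{LM1} to the corestriction of $h$ onto its image $h(N)\cong C_i$, or by the same Lemma \ref{preservesPlus} argument the paper uses for $[\tc)$ applied to $[1)\in X(C_5)^+$.
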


\begin{proof}
	Notice in first place that we may have $D=M$, when $D$ is maximal, and also we may have $M=\varphi(M)$.
	
	It is direct from the definition of kernel that $D=h\mnu([1))$. Now let $M$ be the only maximal deductive containing $D$, which may coincide with $D$. By Lemma \ref{preservesPlus}, $h\mnu([\tc))$ is an irreducible deductive system, and it clearly contains $D$. To see that $h\mnu([\tc))$ is maximal as well, consider $x\notin h\mnu([\tc))$, and the deductive system $D(h\mnu([\tc)),x)=$ \linebreak[4] $\{y:x\to y\in h\mnu([\tc))\}$.  Since  $x\notin h\mnu([\tc))$, $h(x)\in\set{0,\uc,\cc}$. From the table of the implication for $C_5$, it follows that $h(x\to y)=$ $h(x)\to h(y)=1$ for every $y\in N$, so  $x\to y\in h\mnu([\tc))$ for all $y \in N$ and thus $D(h\mnu([\tc)),x)=N$. 

   By Corollary \ref{C42}, it follows that $M=h\mnu([\tc))$. Using equation (\ref{varphi}) and Observation \ref{primeFiltersC5}, we also get $\varphi(M)=h\mnu([\cc))$	and  $\varphi(D)=h\mnu([\uc))$.
   
  Since $h\mnu$ preserves intersections and complements, it follows that $h\mnu(\set{\tc})=$ \linebreak[4] $h\mnu([\tc)\setminus[1))=h\mnu([\tc))\setminus h\mnu([1))=M\setminus D$, and similarly we get $\varphi(M)\setminus M=h\mnu(\set{\cc})$ and $\varphi(D)\setminus\varphi(M)=h\mnu(\set{\uc})$.
\end{proof}

\begin{lemma}\label{L75} 
	The map $\alpha:C_5 ^G\to {\cal I}(F(n))$ is a bijection.
\end{lemma}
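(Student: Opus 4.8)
The plan is to prove the two directions separately, relying on the fact (already used in the text, via Lemma~\ref{LM1}) that $\alpha$ is well defined, and then to extract injectivity from Lemma~\ref{determination} and surjectivity from Theorem~\ref{T51a}. The governing idea is that an irreducible deductive system of $F(n)$ carries exactly the information of a homomorphism into $C_5$: the kernel recovers the whole map, and conversely every such $D$ arises as a kernel.

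For injectivity, I would take $f,g\in C_5^G$ with $\alpha(f)=\alpha(g)=D$ and show that $\bar f$ is completely recovered from $D$. Since $D\in{\cal I}(F(n))={\cal CI}(F(n))$ by Corollary~\ref{C48a}, either $D$ is maximal, or by Corollary~\ref{C42} there is a \emph{unique} maximal deductive system $M$ with $D\subset M$; in both cases the sets $D,M,\varphi(M),\varphi(D)$ depend only on $D$. Lemma~\ref{determination} expresses each fiber $\bar f\mnu(\{v\})$, for $v\in C_5$, in terms of precisely these four sets, so $\bar f$ is determined by $D$ alone. The same reasoning applies to $\bar g$, hence $\bar f=\bar g$; restricting to $G$ gives $f=g$.

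For surjectivity, given $D\in{\cal I}(F(n))$, I would use that $F(n)$ is a non-trivial five-valued Nelson algebra, so Theorem~\ref{T51a} yields $F(n)/D\cong C_i$ for some $2\le i\le 5$, and by Remark~\ref{R32} this $C_i$ is isomorphic to the subalgebra $S_i$ of $C_5$. Composing the canonical epimorphism $h_D\colon F(n)\to F(n)/D$ with this isomorphism and the inclusion $S_i\hookrightarrow C_5$ produces a homomorphism $h\colon F(n)\to C_5$. All three maps send top to top and the last two are injective, so $Ker(h)=Ker(h_D)=D$. Putting $f=h|_G\in C_5^G$ and invoking the universal property of $F(n)$ (with $C_5$ itself a five-valued Nelson algebra), the unique extension $\bar f$ must coincide with $h$, whence $\alpha(f)=Ker(\bar f)=Ker(h)=D$.

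Neither direction is deep once the preparatory results are assembled; the work is in the bookkeeping. The main obstacle in the injectivity argument is verifying that $M$, and therefore $\varphi(M)$ and $\varphi(D)$, is genuinely a function of $D$ alone, which is exactly where the five-valued hypothesis enters through Corollaries~\ref{C48a} and~\ref{C42}. In the surjectivity argument the delicate point is the identification of the map $h$ built from $h_D$ with the canonical extension $\bar f$, which requires the uniqueness clause of the universal property together with the observation that factoring through the isomorphism and inclusion leaves the kernel unchanged.
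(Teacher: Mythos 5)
Your proposal is correct and follows essentially the same route as the paper: surjectivity via the quotient $F(n)/D\cong C_i$ regarded as a subalgebra of $C_5$ with $\bar f=h_D$, and injectivity by recovering the fibers of $\bar f$ from $D$ through Lemma~\ref{determination}, with the uniqueness of the maximal $M\supset D$ (Corollaries~\ref{C48a} and~\ref{C42}) doing the real work. Your version is slightly more explicit than the paper's about why $M$, $\varphi(M)$, $\varphi(D)$ depend only on $D$ and why the kernel is preserved under the isomorphism and inclusion, but these are points the paper leaves implicit rather than genuine differences in approach.
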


\begin{proof} 	We first prove that $\alpha$ is a surjection.  Let  $D \in {\cal I}(L(n))$, $L'=L(n)/D$, and $h_D$ the natural homomorphism from $L(n)$ onto $L'$, so $Ker(h_D)=D$. By Theorem \ref{T51a}, $L'\cong C_i,$ for some $i$, $2\leq i \leq 5$. Thus  $L'$ can be regarded as a subalgebra of $C_5$.
	
	Furthermore, $h_D$  determines a map $f$ from $G$ to $C_5,$ namely the restriction of $h_D$ to $G$:  $f(g)=  h_D(g),$ for every $g\in G$.
	Thus we have  $\bar{f}=h_D$ and therefore $\alpha(f)=Ker(\bar{f})=Ker(h_D)=D$.
	
	Now we show that $\alpha$ es injective as well. Let $f_1,f_2\in C_5^G$  be such that $\alpha(f_1)=Ker(\bar{f_1})=D=Ker(\bar{f_2})=\alpha(f_2)$. Then  we get that $f_1(g)=1$ iff $g\in D$ iff $f_2(g)=1$. Letting $M$ be as in Lemma \ref{determination}, we also get  $f_1(g)=\tc$ iff $g\in M\setminus D$ iff $f_2(g)=\tc$, and similarly for the other possible values of $f_1(g)$, we prove that it equals $f_2(g)$.
\end{proof}

\begin{lemma}\label{isomorfismo}
If $f:G\to C_5$, then 	$S(\bar{f}(G))\cong C_i$ iff $F(n)/P_f\cong C_i$.
\end{lemma}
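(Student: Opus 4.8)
The plan is to show that both $S(\bar{f}(G))$ and $F(n)/P_f$ are isomorphic to one and the same object, namely the image of $\bar{f}$, so that the desired biconditional follows immediately from transitivity of $\cong$. Once this identification is in place the result is essentially the first isomorphism theorem for Nelson algebras.

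First I would observe that, since $\bar{f}$ extends $f$, we have $\bar{f}(G)=f(G)$, and that $G$ generates $F(n)$ by (L1). Applying Lemma \ref{L61} to the homomorphism $\bar{f}\colon F(n)\to C_5$ then yields that $\bar{f}(G)$ generates $\bar{f}(F(n))$; that is, $S(\bar{f}(G))=\bar{f}(F(n))$, the image of $\bar{f}$ regarded as a subalgebra of $C_5$.

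Next, viewing $\bar{f}$ as an epimorphism from $F(n)$ onto its image $\bar{f}(F(n))$, I would invoke Lemma \ref{L62} to conclude $\bar{f}(F(n))\cong F(n)/Ker(\bar{f})$. Since $P_f=Ker(\bar{f})$ by definition, this gives the chain of isomorphisms
$$S(\bar{f}(G))=\bar{f}(F(n))\cong F(n)/Ker(\bar{f})=F(n)/P_f.$$
Thus $S(\bar{f}(G))\cong F(n)/P_f$, and since the index $i$ is fixed, $S(\bar{f}(G))\cong C_i$ holds precisely when $F(n)/P_f\cong C_i$, which proves the equivalence.

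I do not anticipate a genuine obstacle here: the argument combines the generation Lemma \ref{L61} with the homomorphism theorem recorded in Lemma \ref{L62}. The only point requiring a moment's care is the identification $S(\bar{f}(G))=\bar{f}(F(n))$, which rests on $G$ being a generating set of $F(n)$ and on $\bar{f}(F(n))$ being a subalgebra of $C_5$; both are already guaranteed by the construction of $\bar{f}$ and by Lemma \ref{L61}.
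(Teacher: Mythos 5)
Your proof is correct and follows essentially the same route as the paper: the identification $S(\bar{f}(G))=\bar{f}(F(n))$ via Lemma \ref{L61} together with the isomorphism $\bar{f}(F(n))\cong F(n)/Ker(\bar{f})=F(n)/P_f$ from Lemma \ref{L62}. You have merely spelled out the citations that the paper's one-line proof leaves implicit.
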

\begin{proof}
	Since  $S(\bar{f}(G))=\bar{f}(F(n))$ and $\bar{f}(F(n))$ is isomorphic to $F(n)/P_f$, both algebras are isomorphic to the same subalgebra of $C_5$. 
\end{proof}

\begin{remark} \label{functions}
	Given $f:G\to C_5$, by the isomorphism from the previous lemma, we have that:
	\begin{enumerate}[\normalfont(I)]
		\item $S({f}(G))\cong C_2$ if and only if $f(g)\in\{0,1\}$ for all $g\in G$,
		\item $S({f}(G))\cong C_3$ if and only if $f(g)\in\{0,\cc,1\}$ for all $g\in G$ and there exists $g\in G$ such that $f(g)=\cc$,
		\item $S({f}(G))\cong C_4$ if and only if $f(g)\in\{0,\uc,\tc,1\}$ for all $g\in G$ and there exists $g\in G$ such that $f(g)=\uc$ or $f(g)=\tc$,
		\item $S({f}(G))\cong C_5$ if and only if there exists $g\in G$ such that $f(g)=\cc$ and there exists $g'\in G$ such that $f(g')=\uc$ or $f(g')=\tc$.
	\end{enumerate}
  We define $F_i=\set{f\in C_5^G:S(f(G))\cong C_i}$ for $i=2,3,4$, and $5$.
\end{remark}	

\textbf{Notation:} If $f\in C_5^G$, with $G=\set{g_1,\ldots,g_n}$, then we write: $f(g_i)=f_i$. $P_f={\alpha(f)=}Ker(\bar{f})$, where $\bar{f}$ is the only homomorphism from $F(n)$ to $C_5$ extending $f$.

\begin{remark}\label{remark_epis}
	Among the subalgebras of $C_5$ there are just two epimorphisms that are not isomorphisms. We can call them $\pi_e:C_4\to C_2$ and $\pi_o:C_5\to C_3$.
\[
\begin{array}{c||c|c|c|c}
	x       & 0 & \uc & \tc & 1 \\ \hline
	\pi_e(x) & 0 & 0  & 1 & 1 \\ 
\end{array}
\]

	\[
	\begin{array}{c||c|c|c|c|c}
		x       & 0 & \uc & \cc & \tc & 1 \\ \hline
		\pi_o(x) & 0 & 0 & \cc & 1 & 1 \\ 
	\end{array}
	\]
	
\end{remark}

The following theorem describes the order on the set $\mathcal{I}({F(n)})=X(F(n))^+$: 

\begin{theorem} Given  $f, h\in C_5^G$, we have that $P_h\subset P_f$ if and only if the following conditions are verified:
	\begin{enumerate}[\normalfont(1)]
		\item $f_i\in\set{0,\cc,1}$ for every $i$, $1\le i\le n$, 
		\item There exists at least an index $i$ such that $f_i\neq h_i$,
		\item if $f_i=0$ then $h_i=0$ or $h_i=\uc$,
		\item if $f_i=1$ then $h_i=1$ or $h_i=\tc$,
		\item $h_i=\cc$ if and only if $f_i=\cc$.
	\end{enumerate}
\end{theorem}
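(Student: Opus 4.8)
The plan is to convert the statement about the inclusion of kernels into the existence of a connecting epimorphism between subalgebras of $C_5$, using Lemma \ref{Lhom1}, and then to read the five numbered conditions directly off the two non-isomorphism epimorphisms $\pi_e\colon C_4\to C_2$ and $\pi_o\colon C_5\to C_3$ of Observation \ref{remark_epis}. Throughout I will use that $\bar h$ and $\bar f$ are the unique homomorphisms $F(n)\to C_5$ extending $h$ and $f$, with images $N_h=S(\bar h(G))=\bar h(F(n))$ and $N_f=\bar f(F(n))$, both subalgebras of $C_5$, and that $P_h=Ker(\bar h)$, $P_f=Ker(\bar f)$.

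For the forward implication I would assume $P_h\subset P_f$ properly. Then $\bar h\colon F(n)\to N_h$ and $\bar f\colon F(n)\to N_f$ are epimorphisms with $Ker(\bar h)\subseteq Ker(\bar f)$, so Lemma \ref{Lhom1} supplies an epimorphism $\pi\colon N_h\to N_f$ with $\pi\circ\bar h=\bar f$; in particular $f_i=\pi(h_i)$ for every $i$. This $\pi$ cannot be an isomorphism, for an isomorphism preserves and reflects $1$ and would give $\bar f(x)=1\iff\bar h(x)=1$, i.e.\ $P_f=P_h$. Hence by Observation \ref{remark_epis}, $\pi$ is $\pi_e$ or $\pi_o$, so $N_f\in\set{C_2,C_3}$ and thus $f_i\in\set{0,\cc,1}$, giving condition (1); reading $f_i=\pi(h_i)$ off the tables of $\pi_e$ and $\pi_o$ then yields conditions (3), (4) and (5), using that $\cc$ is a value of $\pi$ only for $\pi_o$, which fixes it. Condition (2) is immediate: were $f=h$, uniqueness of the extension would give $\bar f=\bar h$, hence $P_f=P_h$.

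For the converse I would assume conditions (1)--(5). A short case check on $h_i$ (using (1) to rule out $f_i\in\set{\uc,\tc}$, then (3), (4), (5)) shows that $f_i=\pi(h_i)$ for all $i$, where $\pi=\pi_o$ if some $f_i=\cc$ (equivalently some $h_i=\cc$, by (5)) and $\pi=\pi_e$ otherwise. In the latter case every $h_i$ lies in $C_4$, so $\bar h$ maps into $C_4$ and $\pi_e\circ\bar h$ is defined; in both cases $\pi\circ\bar h$ is a homomorphism $F(n)\to C_5$ agreeing with $f$ on $G$, so by uniqueness $\bar f=\pi\circ\bar h$. Since $\pi^{-1}(\set 1)=\set{\tc,1}$ for both $\pi_e$ and $\pi_o$, this gives $P_f=\bar h^{-1}(\set{\tc,1})\supseteq\bar h^{-1}(\set 1)=P_h$.

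The delicate part is the strictness, which must come from condition (2). The key observation is that $\pi$ collapses exactly the values $\uc$ and $\tc$, so $f_i\neq h_i$ forces $h_i\in\set{\uc,\tc}$ for some $i$: if $h_i=\tc$ then $\bar h(g_i)=\tc$ gives $g_i\in P_f\setminus P_h$, and if $h_i=\uc$ then $\bar h(\sim g_i)=\sim\uc=\tc$ gives $\sim g_i\in P_f\setminus P_h$; either way $P_h\subsetneq P_f$. The main obstacle I expect is bookkeeping about which subalgebra $N_h$ actually is: the argument must treat $\pi_o$ and $\pi_e$ as maps defined on all of $C_5$ (respectively $C_4$) rather than assuming $N_h$ is the whole of $C_5$ (respectively $C_4$), with condition (2) being precisely what guarantees that $N_h$ is large enough for the witness $g_i$ or $\sim g_i$ to exist.
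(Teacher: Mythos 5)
Your proof is correct, and both directions rest on the same two ingredients the paper uses: Lemma \ref{Lhom1} to produce a connecting epimorphism from an inclusion of kernels, and the classification of the non-isomorphism epimorphisms $\pi_e,\pi_o$ in Observation \ref{remark_epis}. The forward direction is essentially the paper's, differing only in that you derive condition (1) from the fact that the connecting map must be $\pi_e$ or $\pi_o$ (so the codomain is $C_2$ or $C_3$), whereas the paper first invokes Corollary \ref{C42a} and Theorem \ref{T51} to see that $P_f$ is maximal. Your converse is organized differently: the paper splits into four cases according to the isomorphism type of $F(n)/P_h$ (Theorem \ref{T51a}), ruling out $C_2$ and $C_3$ by contradiction with condition (2), while you bypass that case split by constructing $\pi$ directly from the conditions and using uniqueness of the free extension to get $\bar f=\pi\circ\bar h$. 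The most substantive difference is your treatment of strictness: the paper's Cases 3 and 4 only establish $P_h\subseteq P_f$ and leave properness implicit (it follows because the two quotients are non-isomorphic), whereas you extract an explicit witness $g_i$ or $\sim g_i$ in $P_f\setminus P_h$ from condition (2); this makes visible exactly where condition (2) is used and is a small but genuine improvement in transparency. Your care about corestricting $\bar h$ to $C_4$ before composing with $\pi_e$ is also warranted and handled correctly.
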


\begin{proof}
	Assume that the irreducible deductive systems satisfy $P_h\subset P_f$. We prove the conditions:
	\begin{enumerate}[\normalfont(1)]
		\item By Corollary \ref{C42a}, $P_f$ is maximal, and so by Theorem \ref{T51}, $F(n)/P_f$ is isomorphic to $C_2$ or $C_3$. Then, by Lemma \ref{isomorfismo}, and remark \ref{functions}, we have that $f(g)\in \set{0,\cc,1}$ for all $g\in G$. 
		\item If not then we would have $f=h$, so $P_f=P_h$, contradicting the hypothesis.
		\item If $f_i=0$, then $g_i\in\sim P_f$. Since $P_h\subset P_f$, by Lemma \ref{Lhom1} there is an epimorphism $e:F(n)/P_h\to F(n)/P_f$ such that $e\circ \bar{h}=\bar{f}$ . From $e\circ \bar{h}(g_i)=\bar{f}(g_i)=f_i$, and Remark \ref{remark_epis} it follows that $h_i$ must be $0$ or $\uc$.
		\item If $f_i=1$, by the same reasoning as in the previous item, $h_i$ must be $1$ or $\tc$. 
		\item As in the proof for part 3, we have an epimorphism  $e:F(n)/P_h\to F(n)/P_f$ such that $e\circ \bar{h}=\bar{f}$. Then $f_i=\bar{f}(g_i)=\cc=e(\bar{h}(g_i))=e(h_i)$ if and only if $h_i=\cc$. 
	\end{enumerate}
	
	In the other direction, assume $f$ and $h$ are such that conditions 1 to 5 hold. We want to prove that $P_h\subset P_f$. 
	
	By condition 1, for every $i$, $f\in\set{0,\cc,1}$.  It follows that $F(n)/P_f$ is isomorphic to $C_2$ or $C_3$. We also know by Theorem \ref{T51a}  that $F(n)/P_h$ is isomorphic to $C_k$ for $k\in\set{2,3,4,5}$. 
	
	\underline{Case 1}: If $F(n)/P_h\cong C_2$, then (A) $h_i\in\set{0,1}$ for all $i$. If for some $i$, $f_i=\cc$, then by condition 5, $h_i=\cc$, a contradiction. Then $f_1\in\set{0,1}$ for all $i$. If $f_i=0$, then by condition 3, $h_i\in\set{0,\uc}$, and by (A), we must have $h_i=0$. Similarly, if $f_i=1$, by condition 4 it follows that also $h_i=1$. We have proved then that $h=f$, contradicting condition 2. We conclude this case is not possible.
	
	\underline{Case 2}: If $F(n)/P_h\cong C_3$ then (B) $h_i\in\set{0,\cc,1}$ for all $i$, and there exists $j$ such that $h_j=\cc$. Now, if $h_i=\cc$ then by condition 5, $f_i=\cc$. If $h_i=0$, then $f_i$ cannot be $\cc$ also by condition 5.  If $f_i=1$, then by condition 4, $h_i=0\in\set{1,\tc}$, a contradiction. So we must have $f_i=0=h_i$. Similarly, if $h_i=1$ we can deduce using condition 3 that $f_i=1=h_i$, proving that $f=h$ and thus contradicting condition 2, so this case is not possible either.
	
	\underline{Case 3}: If $F(n)/P_h\cong C_4$ then $h_i\in\set{0,\uc,\tc,1}$ with at least one $h_i$ in $\set{\uc,\tc}$. By condition 5 this implies (C) $f_i\in\set{0,1}$ for all $i$, and therefore $F(n)/P_f\cong C_2$.
	\[
	\xymatrix{
		F(n)\ar[r]^{\bar{f}}\ar[d]_{\bar{h}} & F(n)/P_f\cong C_2\\
		F(n)/P_h\cong C_4\ar[ur]_{\pi_e}	& }
	\]
	We prove now that for all the generators $g_i\in G$ $\pi_e(\bar{h}(g_i))=\bar{f}(g_i)$, so $\pi_e(h_i)=f_i$.
	
	If $h_i=0$, we cannot have $f_i=1$ because in that case, by condition 4, we would have $h_i\in\set{1,\tc}$. Therefore, by (C), $f_i=0=\pi_e(h_i)$.
	
	If $h_i=\uc$, we cannot have $f_i=1$, because that would imply $h_i\in\set{1,\tc}$ as above. Then $f_i=0=\pi_e(\uc)=\pi_e(h_i)$.
	
	Similarly, if $h_i=\tc$, we cannot have $f_i=0$, because that would imply $h_i\in\set{0,\uc}$. Thus $f_i=1=\pi_e(\tc)=\pi_e(h_i)$.
	
	Finally, if $h_i=1$, we cannot have $f_i=0$, because that would imply $h_i\in\set{0,\uc}$. Thus $f_i=1=\pi_e(1)=\pi_e(h_i)$.
	
	Now we can see that if $x\in P_h=Ker(\bar{h})$, $\bar{h}(x)=1$ so $\pi_e(\bar{h}(x))=1=\bar{f}(x)$ so $x\in Ker(\bar{f})=P_f$.
	
	\underline{Case 4}:  If $F(n)/P_h\cong C_5$ then there is at least one $h_i$ in $\set{\uc,\tc}$, and one $h_j=\cc$. By condition 5 this implies $f_j=\cc$, and therefore $F(n)/P_f\cong C_3$. As in Case 3, we can prove that  $\pi_o(h_i)=f_i$ for all $i$ and therefore $\pi_o\circ \bar{h}=\bar{f}$.
	
	If $h_i=0$, we cannot have $f_i=1$ because in that case, by condition 4, we would have $h_i\in\set{1,\tc}$. By condition 5, $f_i$ cannot be $\cc$ either. Therefore, $f_i=0=\pi_o(0)=\pi_o(h_i)$.
	
	If $h_i=\uc$, we cannot have $f_i=1$, because that would imply $h_i\in\set{1,\tc}$ as above, nor $f_i=\cc$. Then $f_i=0=\pi_o(u)=\pi_o(h_i)$.
	
	If $h_i=\cc$, by condition 5 $f_i=\cc$. Then $f_i=\cc=\pi_o(\cc)=\pi_o(h_i)$.
	
	If $h_i=t$, we cannot have $f_i=0$, because that would imply $h_i\in\set{0,\uc}$ and also $f_i\neq \cc$. Thus $f_i=1=\pi_o(\tc)=\pi_o(h_i)$.
	
	Finally, if $h_i=1$, we cannot have $f_i=0$, because that would imply $h_i\in\set{0,\uc}$, and again $f_i\neq \cc$. Thus $f_i=1=\pi_o(1)=\pi_o(h_i)$.
	
	Now we can see as in Case 3 that $P_h\subset P_f$.
	
\end{proof}

\begin{example} If $n=2$, and we denote with pairs $(f_1,f_2)$ the functions in $C_5^2$, we can show the order of the irreductible deductive systems in $F(2)$:
	
	\setlength{\unitlength}{1mm}
	\begin{picture}(30,40)(-15,-20)                    
	\put(0,0){\circle{2}}
	\put(-10,0){\circle{2}}
	\put(0,10){\circle{2}}
	\put(10,0){\circle{2}}
	\put(-9,1){\line(1,1){8}}
	\put(9,1){\line(-1,1){8}}
	\put(0,1){\line(0,1){8}}
	\put(-5,-5){$(\uc,0)$}
	\put(-15,-5){$(0,\uc)$}
	\put(-5,14){$(0,0)$}
	\put(5,-5){$(\uc,\uc)$}
	\end{picture}
	\begin{picture}(30,40)(-15,-20)                    
	\put(0,0){\circle{2}}
	\put(-10,0){\circle{2}}
	\put(0,10){\circle{2}}
	\put(10,0){\circle{2}}
	\put(-9,1){\line(1,1){8}}
	\put(9,1){\line(-1,1){8}}
	\put(0,1){\line(0,1){8}}
	\put(-5,-5){$(\uc,1)$}
	\put(-15,-5){$(0,\tc)$}
	\put(-5,14){$(0,1)$}
	\put(5,-5){$(\uc,\tc)$}
	\end{picture}
	\begin{picture}(30,40)(-15,-20)                    
	\put(0,0){\circle{2}}
	\put(-10,0){\circle{2}}
	\put(0,10){\circle{2}}
	\put(10,0){\circle{2}}
	\put(-9,1){\line(1,1){8}}
	\put(9,1){\line(-1,1){8}}
	\put(0,1){\line(0,1){8}}
	\put(-5,-5){$(\tc,0)$}
	\put(-15,-5){$(1,\tc)$}
	\put(-5,14){$(1,0)$}
	\put(5,-5){$(\tc,\uc)$}
	\end{picture}
	\begin{picture}(30,40)(-15,-20)                    
	\put(0,0){\circle{2}}
	\put(-10,0){\circle{2}}
	\put(0,10){\circle{2}}
	\put(10,0){\circle{2}}
	\put(-9,1){\line(1,1){8}}
	\put(9,1){\line(-1,1){8}}
	\put(0,1){\line(0,1){8}}
	\put(-5,-5){$(\tc,1)$}
	\put(-15,-5){$(1,\tc)$}
	\put(-5,14){$(1,1)$}
	\put(5,-5){$(\tc,\tc)$}
	\end{picture}
	\begin{picture}(30,40)(-15,-20)                    
	\put(0,0){\circle{2}}
	\put(0,10){\circle{2}}
	\put(0,1){\line(0,1){8}}
	\put(-5,-5){$(\uc,\cc)$}
	\put(-5,14){$(0,\cc)$}
	\end{picture}
	\begin{picture}(30,40)(-15,-20)                    
	\put(0,0){\circle{2}}
	\put(0,10){\circle{2}}
	\put(0,1){\line(0,1){8}}
	\put(-5,-5){$(\cc,\uc)$}
	\put(-5,14){$(\cc,0)$}
	\end{picture}
	\begin{picture}(30,40)(-15,-20)                    
	\put(0,0){\circle{2}}
	\put(0,10){\circle{2}}
	\put(0,1){\line(0,1){8}}
	\put(-5,-5){$(\tc,\cc)$}
	\put(-5,14){$(1,\cc)$}
	\end{picture}
	\begin{picture}(30,40)(-15,-20)                    
	\put(0,0){\circle{2}}
	\put(0,10){\circle{2}}
	\put(0,1){\line(0,1){8}}
	\put(-5,-5){$(\cc,\tc)$}
	\put(-5,14){$(\cc,1)$}
	\end{picture}
	\begin{picture}(30,40)(-15,-20)                    
	\put(0,0){\circle{2}}
	\put(-5,-5){$(\cc,\cc)$}
	\end{picture}
\end{example}

Now we can make some observations on the order of the set ${\cal I}(F(n))$:
\begin{itemize}
	\item The functions in $\set{0,\cc,1}^G=F_2\cup F_3$ correspond to the maximal elements of ${\cal I}(F(n))$.
	\item Each of the functions in $\set{0,\cc,1}^G$ with $k$ occurrences of $\cc$ is above exactly $2^{n-k}-1$ irreducible deductive systems.
\end{itemize}

So far we have determined the order of the poset ${\cal I}(F(n))=X(F(n))^+$. In order to establish the structure of $X(F(n))$, notice that in general, in a Nelson space $X$, $X^-=\varphi(X^+)$, and recall that $\varphi$ is a dual order isomorphism, so we also know what is the order relation in $X(F(n))^-$. 

\begin{lemma} If $f\in\set{0,\cc,1}^G=F_2\cup F_3$, then $P_f$ is a maximal deductive system. Furthermore, if $f\in F_2$, $P_f=\varphi(P_f)$ while if $h\in F_3$,  $P_h\subset\varphi(P_h)$.
\end{lemma}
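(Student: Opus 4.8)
The plan is to pin down the exact image of $\bar f$ and then read everything off from Lemma \ref{determination}. First I would record that, since $G$ generates $F(n)$, Lemma \ref{L61} gives $\bar f(F(n))=S(\bar f(G))=S(f(G))$; and by Remark \ref{R32} the only subalgebra of $C_5$ isomorphic to $C_2$ is $\set{0,1}$ while the only one isomorphic to $C_3$ is $\set{0,\cc,1}$. Hence if $f\in F_2$ the image of $\bar f$ is $\set{0,1}$, and if $f\in F_3$ the image of $\bar f$ is $\set{0,\cc,1}$.

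Next, let $M$ be the unique maximal deductive system of $F(n)$ containing $P_f$, which exists because $F(n)$ is five-valued (Corollary \ref{C42a}; take $M=P_f$ if $P_f$ is already maximal). Applying Lemma \ref{determination} to $\bar f$ yields $P_f=\bar f\mnu([1))$, $M=\bar f\mnu([\tc))$, and $\varphi(P_f)=\bar f\mnu([\uc))$, so the whole argument reduces to intersecting these up-sets of $C_5$ with the image of $\bar f$.

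If $f\in F_2$, the image is $\set{0,1}$, and since $[\tc)\cap\set{0,1}=\set{1}=[\uc)\cap\set{0,1}$ we obtain $M=\bar f\mnu(\set{1})=P_f$ and $\varphi(P_f)=\bar f\mnu(\set{1})=P_f$; the first equality shows $P_f=M$ is maximal and the second gives $P_f=\varphi(P_f)$. If $f\in F_3$, the image is $\set{0,\cc,1}$, and now $[\tc)\cap\set{0,\cc,1}=\set{1}$ while $[\uc)\cap\set{0,\cc,1}=\set{\cc,1}$; hence $M=\bar f\mnu(\set{1})=P_f$ is again maximal, whereas $\varphi(P_f)=\bar f\mnu(\set{\cc,1})\supseteq P_f$. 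This inclusion is proper because, by the characterization of $F_3$ in Remark \ref{functions}, some generator $g$ satisfies $f(g)=\cc$, so $g\in\varphi(P_f)\setminus P_f$.

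The only delicate point is invoking Lemma \ref{determination}: one must verify its standing hypothesis that a unique maximal deductive system above $P_f$ exists, which is precisely the five-valuedness of $F(n)$ through Corollary \ref{C42a}, and one must allow the degenerate case $M=P_f$. A purely conceptual alternative avoids the computation altogether: by Lemma \ref{isomorfismo} the quotient $F(n)/P_f$ is $C_2$ or $C_3$, the proof of Theorem \ref{T51a} shows such quotients arise exactly for maximal $P_f$, and then Lemma \ref{lema_pre_teo_repres2}, combined with the fact that $P_f\subseteq\varphi(P_f)$ always holds (Lemma \ref{L410}), forces $P_f=\varphi(P_f)$ when the quotient is $C_2$ and $P_f\subset\varphi(P_f)$ when it is $C_3$.
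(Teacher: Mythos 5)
Your proof is correct, but it follows a genuinely different and more uniform route than the paper's. The paper treats the two halves separately and by hand: for $f\in F_2$ it chases elements directly ($x\in\varphi(P_f)$ gives $\sim x\notin P_f$, so $\bar{f}(x)\neq 0$, hence $\bar{f}(x)=1$ and $x\in P_f$), and for $h\in F_3$ it assumes $P_h=\varphi(P_h)$ and derives a contradiction by pushing the sets $\set{1}$ and $\set{1,\cc}$ through the dual map $\fx(\bar{h})$ using the commutation with $\varphi$; notably, the paper's written proof never argues maximality at all, leaving it to the observation preceding the lemma that the functions in $\set{0,\cc,1}^G$ correspond to the maximal elements of $\mathcal{I}(F(n))$. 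You instead deduce everything from Lemma \ref{determination}: once the image of $\bar{f}$ is identified as $\set{0,1}$ or $\set{0,\cc,1}$, the identities $M=\bar{f}\mnu([\tc))$ and $\varphi(P_f)=\bar{f}\mnu([\uc))$ reduce all three claims to intersecting two up-sets of $C_5$ with that image. This buys a single computation covering maximality and both $\varphi$-statements at once, and it makes explicit the maximality claim that the paper glosses over; the price is that you must (and do) verify the standing hypothesis of Lemma \ref{determination}, namely the existence of the unique maximal $M$ above $P_f$ via Corollary \ref{C42a}, including the degenerate case $M=P_f$. Your closing alternative via Theorem \ref{T51a} and Lemma \ref{lema_pre_teo_repres2} is also sound and is closest in spirit to how the paper organizes the surrounding material.
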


\begin{proof}
	Let $f\in F_2$. We already know that $P_f\subseteq\varphi(P_f)$. Now if $x\in\varphi(P_f)=\complement\sim P_f$, then $\sim x\notin P_f$, so $\bar{f}(\sim x)\neq 1$ and therefore $\bar{f}(x)\neq 0$. Since $\bar{f}(x)\in\set{0,1}$, it follows that $\bar{f}(x)= 1$ and $x\in P_f$.
	
	Now if $h\in F_3$, then by Lemma \ref{isomorfismo}, $S(\bar{h}(G))\cong C_3$. Consider \linebreak[5] $\fx(\bar{h}):X(C_3)\to X(F(n))$, so we have $\varphi_{F(n)}\fx(\bar{h})=\fx(\bar{h})\varphi_{C_3}$.  If we assume that $P_h=\varphi_{F(n)}(P_h)$, then we have that $\varphi_{F(n)}\fx(\bar{h})(\set{1})=\varphi_{F(n)}\bar{h}^{-1}(\set{1})=\varphi_{F(n)}(P_h)=P_h=\fx(\bar{h})\varphi_{C_3}(\set{1})=\fx(\bar{h})(\set{1,\cc})$. On the other hand, we know that for some $g\in G$, $h(g)=\cc$, so $g\in \fx(\bar{h})(\set{1,\cc})=\bar{h}^{-1}(\set{1,\cc})$ and $g\notin P_h$, so the sets cannot be equal.
\end{proof}

\begin{lemma}
	If $f,h\in C_5^G$, are such that $P_f$ and $P_h$ are different maximal elements in $\mathcal{I}(L(n))$, then they are not comparable in $X(L(n))$ with $\varphi(P_h)$ or $\varphi(P_f)$, respectively.
\end{lemma}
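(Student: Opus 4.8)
The plan is to prove the two asserted incomparabilities, and since the statement is symmetric under the interchange $f\leftrightarrow h$ (which swaps the pair $P_f,\varphi(P_h)$ with $P_h,\varphi(P_f)$), it suffices to show that $P_f$ and $\varphi(P_h)$ are incomparable, i.e. that neither $\varphi(P_h)\subseteq P_f$ nor $P_f\subseteq\varphi(P_h)$ holds. Throughout I would work with the membership dictionary coming from the definition of the kernel and from $\varphi(P)=\complement\sim P$ (Definition \ref{RBR}): for every $x\in F(n)$ one has $x\in P_f\iff\bar f(x)=1$, and $x\in\varphi(P_h)\iff\bar h(x)\neq 0$, the latter because $x\in\complement\sim P_h\iff\sim x\notin P_h\iff\bar h(\sim x)=\sim\bar h(x)\neq 1\iff\bar h(x)\neq 0$. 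Since $P_f\neq P_h$, injectivity of $\alpha$ (Lemma \ref{L75}) gives $f\neq h$, so there is a generator $g_j$ with $f_j\neq h_j$; and since $P_f,P_h$ are maximal, Theorem \ref{T51} together with Lemma \ref{isomorfismo} and Remark \ref{functions} forces $f_j,h_j\in\{0,\tfrac12,1\}$.

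The inclusion $\varphi(P_h)\subseteq P_f$ is the easy one to rule out. As $P_h$ is maximal it belongs to $\mathcal{I}(F(n))$, so $P_h\subseteq\varphi(P_h)$ by Lemma \ref{L410}. Hence $\varphi(P_h)\subseteq P_f$ would give $P_h\subseteq P_f$ with both proper and maximal, whence $P_h=P_f$, contradicting $P_f\neq P_h$. (The symmetric computation, using $P_f\subseteq\varphi(P_f)$, rules out $\varphi(P_f)\subseteq P_h$.)

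The inclusion $P_f\subseteq\varphi(P_h)$ is the crux, and here I would exhibit an explicit witness $x\in P_f\setminus\varphi(P_h)$, that is, an $x$ with $\bar f(x)=1$ and $\bar h(x)=0$. I would take $x=\tau(g_j)$ for a unary term $\tau$ applied to the differing generator $g_j$; since $\bar f,\bar h$ are homomorphisms, $\bar f(x)=\tau^{C_5}(f_j)$ and $\bar h(x)=\tau^{C_5}(h_j)$, so it is enough to find $\tau$ with $\tau(f_j)=1$ and $\tau(h_j)=0$. The key observation is that on the three-element chain $\{0,\tfrac12,1\}$ the unary terms built from $\sim$ and $\neg$ (where $\neg v=v\to 0$) separate points in the required sense: using the indicators $\neg\neg v$ (which sends $1\mapsto1$ and $0,\tfrac12\mapsto0$) and $\sim\neg\sim v$ (which sends $0\mapsto1$ and $\tfrac12,1\mapsto0$) together with $\sim$, the six possible patterns of disagreement are dispatched by
\[
\begin{array}{c|cccccc}
(f_j,h_j) & (0,\tfrac12) & (0,1) & (\tfrac12,0) & (\tfrac12,1) & (1,0) & (1,\tfrac12)\\ \hline
\tau & \sim\neg\sim v & \sim v & \neg\sim v & \sim\neg\neg v & v & \neg\neg v
\end{array}
\]
each entry sending $f_j\mapsto1$ and $h_j\mapsto0$. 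For the resulting $x=\tau(g_j)$ we then have $x\in P_f$ while $\bar h(x)=0$ gives $x\notin\varphi(P_h)$, so $P_f\not\subseteq\varphi(P_h)$; the symmetric choice of $\tau$ (with $\tau(h_j)=1,\ \tau(f_j)=0$) yields $P_h\not\subseteq\varphi(P_f)$.

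I expect the main obstacle to be precisely this witness construction, and in particular the handling of the central value $\tfrac12$: when $f_j$ or $h_j$ equals $\tfrac12$ neither $g_j$ nor $\sim g_j$ alone suffices, and one must pass through the weak negation $\neg$ to collapse $\tfrac12$ either onto $0$ (via $\neg\neg$) or onto $1$ (via $\neg$), which is exactly what the terms $\sim\neg\neg v$, $\neg\sim v$ and $\sim\neg\sim v$ accomplish. Once the finite verification of the table is recorded, the remaining steps are immediate.
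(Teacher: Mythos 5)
Your proof is correct, but it takes a genuinely different route from the paper's. The paper disposes of the critical inclusion $P_f\subseteq\varphi(P_h)$ abstractly: it observes that this inclusion (equivalently $P_h\subseteq\varphi(P_f)$, via the dual order isomorphism $\varphi$) together with $P_f\subseteq\varphi(P_f)$ and $P_h\subseteq\varphi(P_h)$ puts the two prime filters exactly in the configuration of Monteiro's interpolation property of Nelson spaces, which produces a prime filter $Q$ with $P_f\subseteq Q\subseteq\varphi(P_f)$ and $P_h\subseteq Q\subseteq\varphi(P_h)$; maximality of $P_f$ and $P_h$ in $\mathcal{I}(F(n))$ then forces $P_f=Q=P_h$, a contradiction. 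You instead exhibit an explicit separating element: using the dictionary $x\in P_f\iff\bar f(x)=1$ and $x\in\varphi(P_h)\iff\bar h(x)\neq 0$, you build a unary term $\tau$ in the differing generator $g_j$ with $\tau^{C_5}(f_j)=1$ and $\tau^{C_5}(h_j)=0$; I checked your six-entry table against the implication table of $C_5$ (recalling $\neg 0=\neg\tfrac12=1$, $\neg 1=0$ on $S_3$) and all entries are correct, as is the easy exclusion of $\varphi(P_h)\subseteq P_f$ via $P_h\subseteq\varphi(P_h)$. The trade-off: the paper's argument is shorter and works for arbitrary maximal elements of $X(N)^+$ in any Nelson space, but it leans on the interpolation axiom as a black box; your argument is elementary and self-contained but is tied to the free-algebra setting, since it needs $P_f$ and $P_h$ to arise as kernels of evaluations that disagree on a generator (which is all that is needed here). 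One small point worth making explicit in your write-up: the reduction of $f_j,h_j$ to $\{0,\tfrac12,1\}$ rests on the identification of the maximal elements of $\mathcal{I}(F(n))$ with the functions in $F_2\cup F_3$, i.e.\ on the fact that a maximal element of the poset $\mathcal{I}(F(n))$ is a maximal deductive system (which follows from Theorem \ref{T43}); citing Theorem \ref{T51} alone elides that step.
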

\begin{proof}
	Assume that $P_f\subseteq \varphi (P_h)$. Since $\varphi$ is a dual order isomorphism and an involution, it follows that $P_h\subseteq \varphi (P_f)$. We already know that $P_f\subseteq\varphi(P_f)$ and $P_h\subseteq\varphi(P_h)$. Then, by the interpolation property, there exists a prime filter $Q$ such that $P_f\subseteq Q\subseteq \varphi (P_f)$ and $P_h\subseteq Q\subseteq \varphi (P_h)$. Since either $Q$ or $\varphi(Q)\in X(L(n))^+$, and both fulfill the interpolation condition,  we can assume without loss of generality that $Q\in X(L(n))^+$.  So, from the maximality of $P_f$ and $P_h$ it follows that $P_f=P_h=Q$, a contradiction.
\end{proof}

\begin{example} \label{example_boolean} In $F(2)$, for the function $f=(0,0)$ we have that $\varphi(P_f)=P_f$ and therefore the connected component of $P_f$ in $X(F(2))$ is as depicted below:

	\setlength{\unitlength}{1mm}
	\begin{center}
\begin{picture}(30,60)(-15,-20)                    
	\put(0,0){\circle{2}}
	\put(-10,0){\circle{2}}
	\put(0,10){\circle{2}}
	\put(10,0){\circle{2}}
	
	\put(-10,20){\circle{2}}
	\put(0,20){\circle{2}}
	\put(10,20){\circle{2}}
	\put(-9,1){\line(1,1){8}}
	\put(1,11){\line(1,1){8}}
	\put(9,1){\line(-1,1){8}}
	\put(-1,11){\line(-1,1){8}}
	\put(0,1){\line(0,1){8}}
	\put(0,11){\line(0,1){8}}
	\put(-5,-5){$P_{(\uc,0)}$}
	\put(-15,-5){$P_{(0,\uc)}$}
	\put(5,10){$P_{(0,0)}=\varphi(P_{(0,0)})$}
	\put(5,-5){$P_{(\uc,\uc)}$}
	
	\put(-5,25){$\varphi(P_{(\uc,0)})$}
	\put(-21,25){$\varphi(P_{(0,\uc)})$}
	\put(11,25){$\varphi(P_{(\uc,\uc)})$}
\end{picture}
	\end{center}
\end{example}
\begin{example}\label{example_not_boolean}
For $h=(0,\cc)$, we have that $\varphi(P_h)\neq P_h$, so the corresponding connected component is:
\begin{center}
		\setlength{\unitlength}{1mm}
	\begin{picture}(30,40)(-15,-5)                  
	\put(0,0){\circle{2}}
	\put(0,10){\circle{2}}
    \put(0,20){\circle{2}}
    \put(0,30){\circle{2}}
	\put(0,1){\line(0,1){8}}
	\put(0,11){\line(0,1){8}}
	\put(0,21){\line(0,1){8}}
	\put(5,0){$P_{(\uc,\cc)}$}
	\put(5,10){$P_{(0,\cc)}$}
	\put(5,30){$\varphi( P_{(\uc,\cc)})$}
	\put(5,20){$\varphi(P_{(0,\cc)})$}
\end{picture}
\end{center}
\end{example}

Now we are ready to give a description of the free algebra $F(n)$. We start by describing the order structure of the poset $X(F(n))$. For each function in $\set{0,\cc,1}^G$, there is a maximal irreducible deductive system, and each one sits in  a different  connected component of $X(F(n))$. Thus, there are $3^n$ different connected components in $X(F(n))$. 

 When the function $f$ is in $F_2$, we have that the corresponding maximal deductive system $P_f$ is above exactly $2^{n}-1$ deductive systems $P_h$, where $h$ is any of the functions that can be obtained by replacing a nonempty subset of the coordinates $0$ or $1$ by $\uc$ or $\tc$, respectively. An illustration of this case was presented in Example \ref{example_boolean}. Each of these connected components of $X(F(n))^+=\mathcal{I}(F(n))$ has then $2^n$ elements. Since for such an $f$, $P_f=\varphi(P_f)$, it follows that the connected component in $X(F(n))$ has $2^n+2^n-1=2^{n+1}-1$ elements.
 
 If we consider now a function $f$ in $F_3$, then $P_f\neq\varphi(P_f)$. This is the case illustrated in Example \ref{example_not_boolean}. Let $k$ be the number of times that $\cc$ appears as a value of $f$. Then $P_f$ is above $2^{n-k}-1$ different irreducible deductive systems $P_h$ that correspond to the functions $h$ that can be obtained by replacing   a nonempty subset of the coordinates $0$ or $1$ in $f$ by $\uc$ or $\tc$, respectively. Each of these connected components of $X(F(n))^+=\mathcal{I}(F(n))$ has then $2^{n-k}$ elements. Since for such an $f$, $P_f\neq\varphi(P_f)$, it follows that the corresponding connected component in $X(F(n))$ has $2\cdot 2^{n-k}=2^{n-k+1}$ elements.

The algebra $F(n)$ can be obtained as the product of the Nelson algebras corresponding to each of the connected components of $X(F(n))$ that we have already determined. 

When the function $f$ is in $F_2$, the distributive lattice underlying the Nelson algebra we get can be thought of as the (underlying distributive lattice of) the boolean algebra with $2^n-1$ atoms with another copy of it on top. Each of these factors has $2\cdot 2^{2^n-1}=2^{2^n}$ elements. 

\begin{example}\label{ejebooleano}
	The diagram below shows the algebra obtained from the Nelson space from Example \ref{example_boolean}. The elements in black are the least elements of the prime filters. 
\setlength{\unitlength}{1mm}
\begin{center}
	\begin{picture}(40,70)(-20,0)
		\put(0,0){\circle{2}}
		\put(-10,10){\color{black}\circle*{2}}
		\put(0,10){\color{black}\circle*{2}}
		\put(10,10){\color{black}\circle*{2}}
		\put(-10,20){\circle{2}}
		\put(0,20){\circle{2}}
		\put(10,20){\circle{2}}
		\put(0,30){\circle{2}}
		\put(1,1){\line(1,1){8}}
		\put(1,11){\line(1,1){8}}
		\put(-9,11){\line(1,1){8}}
		\put(-9,21){\line(1,1){8}}
		\put(-1,1){\line(-1,1){8}}
		\put(-1,11){\line(-1,1){8}}
		\put(9,11){\line(-1,1){8}}
		\put(9,21){\line(-1,1){8}}
		\put(0,1){\line(0,1){8}}
		\put(-10,11){\line(0,1){8}}
		\put(10,11){\line(0,1){8}}
		\put(0,21){\line(0,1){8}}
		\put(0,31){\line(0,1){8}}
				\put(0,40){\color{black}\circle*{2}}
				\put(-10,50){\color{black}\circle*{2}}
				\put(0,50){\color{black}\circle*{2}}
				\put(10,50){\color{black}\circle*{2}}
				\put(-10,60){\circle{2}}
				\put(0,60){\circle{2}}
				\put(10,60){\circle{2}}
				\put(0,70){\circle{2}}
				\put(1,41){\line(1,1){8}}
				\put(1,51){\line(1,1){8}}
				\put(-9,51){\line(1,1){8}}
				\put(-9,61){\line(1,1){8}}
				\put(-1,41){\line(-1,1){8}}
				\put(-1,51){\line(-1,1){8}}
				\put(9,51){\line(-1,1){8}}
				\put(9,61){\line(-1,1){8}}
				\put(0,41){\line(0,1){8}}
				\put(-10,51){\line(0,1){8}}
				\put(10,51){\line(0,1){8}}
				\put(0,61){\line(0,1){8}}
			\end{picture}
		\end{center}
\end{example}		
		When the function $f$ in $F_3$, with $k\ge 1$ of the values equal to $\cc$,  the distributive lattice underlying the Nelson algebra we get can be thought of as the (underlying distributive lattice of) the boolean algebra with $2^{n-k}-1$ atoms with one element on top and then  another copy of the boolean algebra above. Each of these factors has $2\cdot 2^{2^{n-k}-1}+1=2^{2^{n-k}}+1$ elements. 

\begin{example}\label{factor_non_boolean}
	For the chain obtained in Example \ref{example_not_boolean}, we obtain the chain $C_5$ as the corresponding factor in the algebra $F(2)$.
\end{example}

With the structure of each factor already determined, we can count how many of these factors there are and the total number of elements of the free algebra.

There are $2^n$ functions in $\set{0,1}^G$, and the factor corresponding to each of these function has $2^{2^n}$ elements, so in this part of the free algebra there are $(2^{2^n})^{2^n}=2^{2^{2n}}$ elements.
 
 For the rest of the functions corresponding to maximal elements of $\mathcal{I}(F(n))$, notice that there are ${n\choose k}$ with $k$ occurrences of $\cc$, and therefore ${\binom{n}{k}2^{n-k}}$ factors of this kind. So for each value of $k, 1\le k\le n$, there are $(2^{2^{n-k}}+1)^{\binom{n}{k}2^{n-k}}$ elements in $F(n)$. In this way, we arrive to the formula:
	\[|F(n)|=2^{2^{2n}}\times\prod_{k=1}^{n}(2^{2^{n-k}}+1)^{\binom{n}{k}2^{n-k}}.\]
	
It is worth noting that the formula above had already appeared in the work of Diana Brignole, \cite{brignole65nelson}, indicating that she was indeed aware of the structure of the free algebras. However, to the best of our knowledge, she never published a proof or a detailed account of the underlying constructions. Her result, though stated without justification, aligns with the findings presented here and highlights the depth of her insight into the topic.

\bibliography{nelson} 
\bibliographystyle{alpha}

\appendix

\newpage
\setcounter{page}{1}

\section{Revista de la UMA, vol. 23 (1965), p. 46}
\label{S10}
\medskip

\noindent D. BRIGNOLE DE MARTIN (Universidad Nacional del Sur).\\
{\bf Algebras de Nelson pentavalentes.} (Por ausencia del autor se ley\'o el t\'{\i}tulo).
\medskip

Un \'algebra de Nelson se dice pentavalente si cumple la f\'ormula:\\             
$ ((a\rightarrow c)\rightarrow b)\rightarrow(((b \rightarrow a)\rightarrow b)\rightarrow b)=1.$ Los ejemplos m\'as sencillos, no triviales, de tales \'algebras, son una cadena de 5 elementos, y sus
sub\'algebras con 2, 3 y 4 elementos.
\medskip

TEOREMA FUNDAMENTAL: Toda \'algebra de Nelson pentavelente es sub\'algebra de un producto cartesiano de cadenas con 5 elementos.
\medskip

En la demostraci\'on de este teorema, desempe\~{n}a un papel importante el estudio de los sistemas deductivos, en particular de los sistemas deductivos irreductibles o primos.   

TEOREMA: Para que un \'algebra de Nelson sea pentavalente es necesario y suficiente que cada sistema deductivo irreductible sea m\'aximo, o est\'e contenido en un solo sistema deductivo propio.

Si a los axioma-esquema del c\'alculo proposicional constructivo con negaci\'on fuerte, se agrega el axioma-esquema
$$ ((a\rightarrow c)\rightarrow b)\rightarrow(((b \rightarrow a)\rightarrow b)\rightarrow b)=1.$$
el \'algebra de Lindenbaum correspondiente puede ser caracterizada como el \'algebra de Nelson pentavalente libre con tantos generadores libres como son  las variables de enunciado.

La cadena con 5 elementos algebrizada en su forma natural, es una matriz caracter\'{\i}stica para este c\'alculo.

Estos resultados son an\'alogos a los obtenidos por Luiz Monteiro, 1963, para el c\'alculo proposicional implicativo trivalente.

\setcounter{page}{1}

\newtheorem{defi}{\bf Definition}

\newtheorem{teorema}{\bf Theorem}

\newtheorem{corolario}{\bf Corollary}

\newenvironment{Dem.}{\noindent\bf Proof. \rm}{\hfill$ \square$}

\section{{On the 5-valued Nelson algebras}, preprint, Universidad Nacional del Sur, (1965).}
\label{S11}

\medskip

The purpose of this paper is to give a representation theorem for 5-valued Nelson algebras  and to express the number  of elements of the free algebra with $n$ generators.
\medskip

We will first recall the definitions of a Nelson algebra and after that we will prove some theorems  concerning the special class of the 5-valued algebras. (see \cite{{B2}, {B3},{B4}}, and \cite {B6} for the definitions and properties of a Nelson algebra.)

\medskip

{\bf I.- Definitions}\\[2mm]
We will use here the notion of deductive system, (i.e. a subset $D$ of $A$ such that i) $1\in D$ and ii) if $a,a\rightarrow b \in D$ then $b\in D$.), and those of maximal, irreducible, and completely irreducible deductive systems, and we will use properties concerning them, which are similar to those that can be proved for the same systems in a distributive lattice.

\begin{defi} A Nelson algebra  is a  system $<A, \wedge, \vee, \rightarrow,\sim, 1>$  where $A$ is a non empty set, $\wedge, \vee, \rightarrow$ are binary operations, $\sim$ is an unary operation, $1$ is a fixed element of $A$, and such that the following properties are verified:
	
	\begin{tabular}{llll}
		{\rm N1)}&$ x \vee 1=1,$ &  {\rm N2)}&$x\wedge (x \vee y)=x,$ \\
		&&&\\
		{\rm N3)}&$x \wedge (y \vee z)=(z \wedge x)\vee (y \wedge x),$ &  {\rm N4)}&$\sim \; \sim x=x,$ \\
		&&&\\
		{\rm N5)}&$\sim(x \wedge y)= \; \sim x \; \vee \sim y,$ & {\rm  N6)}&$x \; \wedge \sim x=(x \; \wedge \sim x)\wedge (y \; \vee \sim y),$ \\
		&&&\\
		{\rm N7)}&$x\rightarrow x=1,$&{\rm N8)}&$(\sim x \vee y)\wedge  (x\rightarrow y)=\sim x  \vee y,$ \\
		&&&\\
		{\rm N9)}&$x \wedge (x\rightarrow y)=x \wedge (\sim x \vee y),$& {\rm N10)}&$ x\rightarrow (y\wedge z)=(x \rightarrow y)\wedge (x \rightarrow z). $
	\end{tabular}
\end{defi}

\medskip

It follows from properties N1)-N5) that a Nelson algebra is a distributive lattice, with first and last elements, and with a Morgan negation.
\begin{defi}
	A Nelson algebra is said to be  5-valued if on it is verified the following formula:
	$$ ((a\rightarrow c)\rightarrow b)\rightarrow (((b \rightarrow a)\rightarrow b)\rightarrow b)=1.$$
\end{defi}

The identity above was suggested by the fact that it characterizes the 3-valued Heyting algebras (see \cite{B5}). Indeed, in most of this paper we will follow the ideas in
\cite{B5}.

\medskip

The simplest example of such an algebra is a chain of 5 elements in which the operations $\rightarrow,\sim$ are defined by the following tables, ($\wedge, \vee$ as usual)

$$
\begin{minipage}{4cm} 
	\beginpicture
	\setcoordinatesystem units <3mm,3mm>
	\setplotarea x from 0 to 20, y from 0 to 12
	\put {$ \bullet$} [c] at  8 0
	\put {$ 0$} [c] at  9 0
	\put {$ \bullet$} [c] at 8 3
	\put {$ a$} [c] at  9 3
	
	\put {$ \bullet$} [c] at  8 6
	\put {$ b$} [c] at  9 6
	\put {$ \bullet$} [c] at  8 9 
	\put {$ c$} [c] at  9 9
	\put {$ \bullet$} [c] at 8 12
	\put {$ 1$} [c] at  9 12
	\setlinear \plot 8 0  8 12 / 
	\endpicture
\end{minipage}
\begin{minipage}{3cm} 
	\beginpicture
	\setcoordinatesystem units <3mm,3mm>
	\setplotarea x from 0 to 10, y from 0 to 1
	\begin{tabular}{c|c}
		$x$&$\sim x$\\\hline
		$0$&$1$\\[2mm]
		$a$&$c$\\[2mm]
		$b$&$b$\\[2mm]    
		$c$&$a$\\[2mm]
		$1$&$0$
	\end{tabular}
	\endpicture
\end{minipage}
\begin{minipage}{4cm} 
	\beginpicture
	\begin{tabular}{c|ccccc}
		$\rightarrow$&$0$&$a$&$b$&$c$&$1$\\\hline
		$0$          &$1$&$1$&$1$&$1$&$1$\\[2mm]
		$a$&$1$&$1$&$1$&$1$&$1$\\[2mm]
		$b$&$1$&$1$&$1$&$1$&$1$\\[2mm]    
		$c$&$a$&$a$&$b$&$1$&$1$\\[2mm]
		$1$&$0$&$a$&$b$&$c$&$1$
	\end{tabular}
	\endpicture
\end{minipage}
$$

We will represent this particular 5N algebra (5-valued Nelson algebra) by $C_5$.

\begin{teorema}
	\label{T1} Let $C$ a completely irreducible deductive system (c.i.d.s.) of a 5N algebra  ${\cal A}$, and let $D$ be a proper deductive system which contains $C$ properly.Then $D$ is a maximal deductive system (m.d.s.).
\end{teorema}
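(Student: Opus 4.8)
The plan is to argue by contradiction: I assume $D$ is \emph{not} maximal, so there is a proper deductive system $M$ with $D \subset M \subsetneq \mathcal{A}$, and I manufacture elements to feed into the five-valued identity (NT$_3$) until a contradiction appears.

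First I would exploit complete irreducibility. Since $C$ is completely irreducible, by the equivalence between complete irreducibility and boundedness (Theorem \ref{T41}) there is an element $b \neq 1$ such that $C$ is bounded to $b$; in particular $b \notin C$, while every proper deductive system strictly containing $C$ must contain $b$. As $C \subset D$ and $D$ is proper, this gives $b \in D$. Next I pick a witness $a \in M \setminus D$ to the strict inclusion $D \subset M$. Because $b \in D$ and $a \notin D$, the deductive-system property (D\ref{defds2}) forces $b \to a \notin D$, hence $b \to a \notin C$; since $C$ is bounded to $b$, Lemma \ref{CT42} yields $(b \to a) \to b \in C$. Were $((b \to a) \to b) \to b$ also in $C$, a second application of (D\ref{defds2}) would give $b \in C$, a contradiction, so I record that $((b \to a) \to b) \to b \notin C$.

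The heart of the argument is to contradict this last fact using (NT$_3$). Since $M$ is proper I may choose $c \in \mathcal{A} \setminus M$. From $a \in M$ and $c \notin M$, the property (D\ref{defds2}) gives $a \to c \notin M$, hence $a \to c \notin C$, and boundedness of $C$ to $b$ (Lemma \ref{CT42}) again gives $(a \to c) \to b \in C$. Now the defining identity supplies $((a\to c)\to b)\to(((b \to a)\to b)\to b) = 1 \in C$; combining it with $(a\to c)\to b \in C$ by (D\ref{defds2}) delivers $((b\to a)\to b)\to b \in C$ --- exactly the contradiction I had prepared. Hence no proper $M$ properly contains $D$, and $D$ is maximal.

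The main obstacle is choosing the right instance of (NT$_3$): one must assemble precisely the terms $(a\to c)\to b$ and $(b\to a)\to b$ so that the antecedent of the identity lands inside $C$ while its innermost conclusion $((b\to a)\to b)\to b$ is already known to lie outside $C$. This is exactly where both hypotheses are indispensable --- the boundedness of $C$ to $b$ converts each ``not in $C$'' statement into membership of an implication $(\cdot)\to b$, and the properness of $M$ furnishes the external element $c$, without which the term $(a\to c)\to b$ could not be forced into $C$.
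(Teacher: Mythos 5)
Your proof is correct and follows essentially the same route as the paper's own argument (Theorem \ref{T43} in the main text and Theorem \ref{T1} of the appendix): boundedness of $C$ to $b$ via Theorem \ref{T41} and Lemma \ref{CT42} converts the non-membership facts into $(b\to a)\to b\in C$ and $(a\to c)\to b\in C$, and the identity (NT$_3$) then forces $((b\to a)\to b)\to b\in C$, contradicting $b\notin C$. No gaps.
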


\begin{Dem.} Let $C$ be a c.i.d.s. . Then $C$ is maximal between the d.s. which dont't contain some fixed element $b$.
	
	Let $D$ be a proper d.s., $C \subset D$. Therefore $b \in D$. Assume, by contradiction, that $D$  is not maximal. Then there exists a proper d.s. $U$ , $D \subset U.$
	
	Let $a \in U\setminus D$. Hence $a \notin C.$ Because $C$ is maximal relative to $b$, and $a\notin C$, we have $a \rightarrow b \in C.$
	
	From $a \in U$, $a \rightarrow ( b \rightarrow a)=1 \in U$, we get (i) $b \rightarrow a \in U.$ Besides, (ii) $b \rightarrow a \notin D.$ (If $b \rightarrow a \in D$, from $b\in D$ we get $a \in D$ which is a contradiction.)
	
	From (i), (ii), $b \rightarrow a \in U\setminus D$ and therefore $b \rightarrow a \notin C.$ Hence (iii) $(b \rightarrow a)\rightarrow b \in C.$  Then, $((b \rightarrow a)\rightarrow b)\rightarrow b \notin C.$ (Otherwise, from (iii) we get $b\in C$,  which is a contradiction.)
	
	Let $c$ be not in $U$. Then $a \rightarrow c \notin U$. (Otherwise, from  $a\in U$, we get $c \in U$.)
	
	Hence $a \rightarrow c \notin C$, and therefore (iv)  $(a \rightarrow c)\rightarrow b \in C.$ Since it is a 5N algebra, we get from (iv) $((b \rightarrow a)\rightarrow b)\rightarrow b \in C,$
	which is a contradiction.
\end{Dem.}
\begin{corolario}
	\label{C1}
	If a c.i.d.s. $C$ is not maximal there exists a unique proper d.s. which contains it. (And this one is maximal.)
\end{corolario}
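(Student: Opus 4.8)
The plan is to read off existence directly from the theorem just proved and then to settle uniqueness with a short argument based on the complete irreducibility of $C$. Since $C$ is a proper deductive system that is not maximal, the very definition of maximality provides a proper deductive system $D$ with $C\subset D$; by Theorem \ref{T1} this $D$ is automatically maximal. Hence a proper deductive system properly containing $C$ exists, and \emph{every} such system is in fact maximal, which already accounts for the parenthetical assertion.

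For uniqueness I would argue by contradiction. Suppose $D_1$ and $D_2$ are distinct proper deductive systems, each properly containing $C$; by Theorem \ref{T1} both are maximal. The key intermediate claim is that $D_1\cap D_2=C$. The inclusion $C\subseteq D_1\cap D_2$ is immediate. If it were proper, then $D_1\cap D_2$ would be a proper deductive system (being contained in the proper system $D_1$) that properly contains $C$, hence maximal by a second application of Theorem \ref{T1}. But distinct maximal systems are incomparable: from $D_1\cap D_2\subseteq D_1$, equality would give $D_1\subseteq D_2$ and therefore $D_1=D_2$ by maximality, contrary to $D_1\neq D_2$; so $D_1\cap D_2$ is \emph{strictly} contained in the proper system $D_1$, which no maximal system can be. This contradiction forces $C=D_1\cap D_2$.

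The complete irreducibility of $C$ then finishes the proof: writing $C$ as the intersection $D_1\cap D_2$ of two deductive systems, complete irreducibility yields $C=D_1$ or $C=D_2$, each contradicting the fact that $D_1$ and $D_2$ contain $C$ properly. Hence $D_1=D_2$, so there is exactly one proper deductive system properly containing $C$, and it is maximal. I expect the only genuinely delicate point to be the intermediate claim $C=D_1\cap D_2$, which combines a second use of Theorem \ref{T1} with the incomparability of distinct maximal deductive systems; once that is in place, the complete irreducibility of $C$ closes the argument immediately, and everything else is routine bookkeeping.
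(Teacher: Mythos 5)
Your proof is correct, but the uniqueness argument is genuinely different from the one the paper gives (the corollary is proved in the main text as Corollary \ref{C42}). There, uniqueness comes from the chain structure: since $C$ is (completely) irreducible, the family of proper deductive systems containing it is totally ordered by inclusion (Corollary \ref{C45N}, which rests on the linearity of five-valued Nelson algebras, Lemma \ref{LAL}, via condition (FN1)); two comparable maximal systems must coincide. You instead avoid linearity altogether: you apply Theorem \ref{T1} a second time to the intersection $D_1\cap D_2$ to force $D_1\cap D_2=C$, and then let the complete irreducibility of $C$ (in fact ordinary irreducibility suffices for a two-fold intersection) produce the contradiction. Both arguments are sound; your delicate intermediate step $C=D_1\cap D_2$ checks out, since a maximal deductive system cannot be properly contained in a proper one, and $D_1\cap D_2=D_1$ would give $D_1\subseteq D_2$ and hence $D_1=D_2$. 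What your route buys is self-containedness at this point of the development --- only Theorem \ref{T1} and the defining property of a c.i.d.s.\ are needed; what the paper's route buys is economy once the chain lemma is in hand, and that lemma is needed anyway for Lemma \ref{L48} (every irreducible deductive system of a five-valued algebra is completely irreducible), so the paper gets the corollary essentially for free.
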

\begin{teorema}
	\label{T2}  Let $A$ be an Nelson algebra with the following property: any c.i.d.s. either is maximal or there exists only one proper d.s. which contains it. Then $A$ is a 5N algebra.
\end{teorema}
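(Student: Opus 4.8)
The plan is to argue by contradiction. Assume $A$ satisfies the stated property on completely irreducible deductive systems but is not $5$-valued. The failure of the defining equation produces elements $a,b,c\in A$ with
$$((a\to c)\to b)\to(((b\to a)\to b)\to b)\neq 1,$$
and the goal is to manufacture from these witnesses a completely irreducible deductive system that is simultaneously non-maximal and contained in two distinct proper deductive systems, thereby contradicting the hypothesis.

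The decisive observation — and the step I expect to be the crux — is that the two implicational subterms appearing in the equation are exactly the right generators. First I would form the deductive system $D=D(\{(a\to c)\to b,\ (b\to a)\to b\})$. By the explicit description of finitely generated deductive systems (Lemma \ref{L42}), $x\in D$ precisely when $((a\to c)\to b)\to(((b\to a)\to b)\to x)=1$, so the failure of the equation says exactly that $b\notin D$. This is the heart of the matter: it is the shape of (NT$_3$) itself that forces $b$ to escape the generated system, and recognizing this is the only genuinely clever move in the argument. Applying Rasiowa's extension theorem (Theorem \ref{THR}) then yields a completely irreducible $C$ with $D\subseteq C$ and $b\notin C$.

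Next I would extract the non-membership facts that live inside $C$. Since $(b\to a)\to b\in D\subseteq C$, having $b\to a\in C$ would force $b\in C$ by modus ponens, so $b\to a\notin C$; and because $a\le b\to a$ by (N\ref{111220245}), this gives $a\notin C$. Likewise $(a\to c)\to b\in C$ forces $a\to c\notin C$, lest $b\in C$. With these in hand I would introduce the two extensions $D_1=D(C,b)=\{x:b\to x\in C\}$ and $D_2=D(C,a)=\{x:a\to x\in C\}$, again via Lemma \ref{L42}. The non-memberships translate into $a\notin D_1$ (from $b\to a\notin C$) and $c\notin D_2$ (from $a\to c\notin C$), so both $D_1$ and $D_2$ are proper; and since $b\in D_1\setminus C$ and $a\in D_2\setminus C$, each properly contains $C$.

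Finally I would observe that $a\in D_2$ while $a\notin D_1$, so $D_1\neq D_2$. Thus $C$ is a completely irreducible deductive system that is not maximal yet sits below two distinct proper deductive systems — precisely the configuration the hypothesis forbids. This contradiction forces $A$ to be $5$-valued. The only delicate part is the bookkeeping in the third paragraph, keeping straight which of $a$, $b$, $a\to c$, and $b\to a$ lie inside or outside $C$; all the conceptual weight is front-loaded into the choice of generators of $D$.
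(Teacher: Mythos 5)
Your proposal is correct and follows essentially the same route as the paper's proof of this theorem (Theorem \ref{T44} in the main text): the same generated deductive system $D$, the same appeal to Lemma \ref{L42} and Theorem \ref{THR} to obtain the completely irreducible $C$ avoiding $b$, the same non-membership deductions, and the same two extensions $D_1=D(C,b)$ and $D_2=D(C,a)$ yielding the contradiction. No gaps.
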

\begin{proof}
	Assume by contradiction that there exists elements $a,b,c \in A$ such that:
	$${\rm (i)} \;\; ((a\rightarrow c)\rightarrow b)\rightarrow(((b \rightarrow a)\rightarrow b)\rightarrow b)\neq 1.$$
	Let $D_0$ be the d.s. generated by the elements  $(b \rightarrow a)\rightarrow b$ and $(a \rightarrow c)\rightarrow b$, i.e.
	$$ D_0=\{x\in A: ((a\rightarrow c)\rightarrow b))rightarrow(((b \rightarrow a)\rightarrow b)\rightarrow x)= 1\}.$$
	Because (i) $b\notin D_0$. Therefore, there exists a c.i.d.s. $C$ such that $b\notin C$  and $D_0\subseteq C$. $b\rightarrow a \notin C$. Therefore $a \notin C$.
	\medskip
	
	Let $D_1$ be the deductive system generated by $C$ and $b$, i.e. $D_1=\{x\in A : b \rightarrow x \in C\}$. Then $C\subseteq D_1$, and since $a\notin D_1$, $D_1$ is proper. In the same way we can define $D_2$ be the d.s. generated by $C$ and $a$, and prove that $C\subseteq D_2$ and that $D_2$ is proper.
	
	Therefore, we have two proper d.s. $D_1$ and $D_2$, both containing $C$ properly. And they are different, because $a\in D_2$, $a \notin D_1$. So, we get a contradiction. 
\end{proof}
\begin{corolario}
	\label{C2} A Nelson algebra is 5-valued iff any c.i.d.s. which is not maximal, is contained in only one proper d.s.
\end{corolario}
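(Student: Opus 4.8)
The plan is to read this corollary directly off Theorem~\ref{T1} and Theorem~\ref{T2}, which between them already contain both halves of the equivalence, so that no fresh argument is needed. For the forward implication I would assume that $A$ is a 5N algebra and let $C$ be any c.i.d.s. that is not maximal; Corollary~\ref{C1}, the immediate consequence of Theorem~\ref{T1}, then provides a \emph{unique} proper deductive system containing $C$ (and that system is in fact maximal). This is precisely the assertion that a non-maximal c.i.d.s. is contained in only one proper deductive system.

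For the converse I would observe that the hypothesis of the present corollary --- that every non-maximal c.i.d.s. lies in only one proper deductive system --- is just a rephrasing of the hypothesis of Theorem~\ref{T2}, namely that each c.i.d.s. is either maximal or contained in a single proper deductive system. Applying Theorem~\ref{T2} therefore yields that $A$ is a 5N algebra, closing the equivalence.

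Since both directions reduce to quoting earlier results, the only thing to verify is that the wording of the corollary matches the conclusion of Corollary~\ref{C1} and the premise of Theorem~\ref{T2}; the substantive work lies entirely in those earlier proofs --- the use of the five-valued identity on a completely irreducible deductive system in Theorem~\ref{T1}, and the construction of two distinct proper deductive systems above a c.i.d.s. when the identity fails in Theorem~\ref{T2}. I therefore expect no genuine obstacle here beyond this bookkeeping.
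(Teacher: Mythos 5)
Your proposal is correct and matches the paper exactly: the paper states this corollary as an immediate consequence of Theorem~\ref{T1} (via Corollary~\ref{C1}) for the forward direction and Theorem~\ref{T2} for the converse, with no additional argument. The only point worth noting is that ``contained in only one proper d.s.'' must be read as \emph{properly} contained (since $C$ itself is always a proper deductive system containing $C$), which is how both you and Corollary~\ref{C1} implicitly interpret it.
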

\begin{teorema}
	\label{T3}  Given an irreducible deductive system (i.d.s.) $C$ of a Nelson algebra, the family of all proper d.s.  which contains it, is a chain. 
\end{teorema}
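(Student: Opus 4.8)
The plan is to derive this from two independent facts about a five-valued Nelson algebra $\mathcal{A}$: that it is \emph{linear}, i.e. $(x\to y)\lor(y\to x)=1$ for all $x,y$, and that every irreducible deductive system is a prime filter. Linearity is obtained by applying the identity $x\lor\neg y\lor(x\to y)=1$ (the form \textbf{(FN1)} equivalent to $\mathrm{(NT_3)}$) to the pairs $(x,y)$ and $(y,x)$, joining the two instances, and absorbing the superfluous disjuncts via $\neg x\le x\to y$; this is exactly the assertion that $\mathcal{A}$ is linear. The primeness of an irreducible deductive system is established separately, so both ingredients are available before this theorem.

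First I would fix an irreducible deductive system $C$ and record that it is a prime filter: if $u\lor v\in C$ then $u\in C$ or $v\in C$. This is the property that couples the lattice-theoretic linearity with the deductive-system structure, since by itself linearity only speaks about joins in the algebra.

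Then I would proceed by contradiction. Suppose the family of proper deductive systems containing $C$ is not a chain, so there exist proper deductive systems $D_1,D_2$ with $C\subseteq D_1$, $C\subseteq D_2$ that are incomparable; pick $a\in D_1\setminus D_2$ and $b\in D_2\setminus D_1$. By linearity $(a\to b)\lor(b\to a)=1\in C$, and since $C$ is prime, either $a\to b\in C$ or $b\to a\in C$. If $a\to b\in C\subseteq D_1$, then together with $a\in D_1$ the closure of $D_1$ under modus ponens gives $b\in D_1$, contradicting $b\notin D_1$; the case $b\to a\in C$ is symmetric and forces $a\in D_2$, again impossible. Hence no incomparable pair exists and the family is totally ordered.

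The short contradiction is not where the difficulty lies; the substantive step is securing linearity, which requires passing through the equivalent reformulations of the defining identity to reach \textbf{(FN1)} and then performing the join-and-absorb simplification. Once linearity and the primeness of $C$ are in hand, the chain property is immediate. I would also note that this route is self-contained: it does not appeal to the later structural description of the quotients $\mathcal{A}/C$ by irreducible systems, which would be circular here, since that description itself relies on this chain result.
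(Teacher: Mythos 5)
Your proposal is correct and follows essentially the same route as the paper: the authors prove that a five-valued Nelson algebra is linear via \textbf{(FN1)} (Lemma \ref{LAL}), that every irreducible deductive system is a prime filter (Lemma \ref{L410}), and then run exactly your contradiction argument with $(a\to b)\lor(b\to a)=1$ in Lemma \ref{L45N} and Corollary \ref{C45N}. The only caveat is implicit in both treatments: the statement must be read as concerning five-valued (or at least linear) Nelson algebras, as you correctly assume.
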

\begin{proof} Let $K=\{D_i\}_{i\in I}$ the family of all proper d.s.  such that $C\subseteq D_i$. $C\in K$, so $K$ is not empty.
	
	Suppose by contradiction that $K$ is not a chain, i.e., there exists $i,j \in I$ such that: $D_i\not\subseteq D_j$, $D_j\not\subseteq D_i$.
	
	Therefore, there exists $a,b\in A$ such $a \in D_i$, $a\notin D_j$, $b \in D_j$, $b\notin D_i$. Hence $a,b \notin C$. Then $a\rightarrow b\in C$ and therefore $a\rightarrow b \in D_i$.
	From $a, a\rightarrow b\in D_i$, we get $b\in D_i$, which is a contradiction.
\end{proof}
\begin{teorema}
	\label{T4}   Any i.d.s. of a 5N algebra is a c.i.d.s.
\end{teorema}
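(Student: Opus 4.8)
The plan is to mirror the decomposition argument for proper deductive systems, trimming it down with the chain and uniqueness results already at hand. Let $C$ be an i.d.s.\ of the 5N algebra $A$, so in particular $C$ is proper. First I would invoke the standard fact—holding for deductive systems as it does for filters of a distributive lattice, as recalled at the beginning of this section—that every proper d.s.\ is the intersection of the completely irreducible deductive systems that contain it. Applying this to $C$ yields $C=\bigcap_{i\in I}C_i$, where each $C_i$ is a c.i.d.s.\ with $C\subseteq C_i$.

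Next I would control the family $\{C_i\}_{i\in I}$. Since each $C_i$ is a proper d.s.\ containing the i.d.s.\ $C$, Theorem \ref{T3} tells us that this family is a chain under inclusion. The crucial reduction is that the chain has at most two elements: if there were three distinct members $C_1\subset C_2\subset C_3$, then $C_1$ would be a c.i.d.s.\ properly contained in the proper d.s.\ $C_2$, so by Theorem \ref{T1} the system $C_2$ would be maximal; but $C_2\subset C_3$ with $C_3$ proper contradicts maximality. Hence the chain has at most two elements, say $C_1$ and $C_2$, and $C=C_1\cap C_2$ (or simply $C=C_1$ in the degenerate case).

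Finally, because $C$ is irreducible and $C=C_1\cap C_2$, we get $C=C_1$ or $C=C_2$; in either case $C$ coincides with one of the $C_i$ and is therefore completely irreducible, as claimed. The step I expect to carry the real weight is the bound on the chain length, since that is exactly where the 5-valued hypothesis is consumed, namely through Theorem \ref{T1}, whose proof is the place that uses the defining identity. By contrast, the decomposition into completely irreducible systems and the chain property of Theorem \ref{T3} are the routine ingredients.
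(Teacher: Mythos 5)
Your proof is correct and follows essentially the same route as the paper: decompose the irreducible deductive system as an intersection of completely irreducible ones, use the chain property (Theorem \ref{T3}) together with Theorem \ref{T1} to bound the chain by two elements, and conclude by irreducibility. The only cosmetic difference is that the paper, having reduced to $C\subseteq D_1\subseteq D_2$, reads off $C=D_1$ directly from the inclusion rather than re-invoking irreducibility, but both closings are valid.
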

\begin{proof} Let $C$ be an i.d.s.. Any proper d.s. can be given as the intersection of a family of c.i.d.s. . So, $C=\bigcap\limits_{i \in I} D_i$, for some $I$, $D_i$ c.i.d.s. for all $i \in I$.
	
	From Theorem \ref{T3}, $\{D_i\}_{i\in I}$ is a chain and from Theorem \ref{T1} it follows  that it has no  more than two elements, i.e., $C\subseteq D_1\subseteq D_2$. Therefore  $C=D_1$, i.e., $C$ is a c.i.d.s..
\end{proof}
\begin{corolario}
	\label{C3} A Nelson algebra is 5-valued iff any i.d.s. either is maximal or it is contained in a unique proper d.s..
\end{corolario}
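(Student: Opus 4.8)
The plan is to prove the two implications separately, leaning entirely on the results already established for this class of algebras (Theorems \ref{T1}--\ref{T4} and Corollaries \ref{C1}--\ref{C2}). Neither direction requires new constructions; each is a short bridge between the notions of irreducible and completely irreducible deductive system.

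For the forward implication, I would assume that $A$ is a 5N algebra and take an arbitrary i.d.s. $C$. The decisive step is Theorem \ref{T4}, which guarantees that in a 5N algebra every irreducible deductive system is in fact completely irreducible. Once $C$ is known to be a c.i.d.s., Corollary \ref{C1} applies verbatim: if $C$ is not maximal, then there exists a unique proper deductive system containing it (and that system is itself maximal). This yields exactly the claimed dichotomy, namely that $C$ is either maximal or contained in a unique proper d.s.

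For the converse, I would assume that every i.d.s. of $A$ is either maximal or contained in a unique proper deductive system. The key observation is that every completely irreducible deductive system is irreducible, so the hypothesis applies in particular to every c.i.d.s. of $A$. Restricting attention to those systems gives precisely the hypothesis of Corollary \ref{C2} (equivalently, of Theorem \ref{T2}), whose conclusion is that $A$ is 5-valued. Thus the converse follows at once.

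I do not expect a genuine obstacle here. The only point requiring care is keeping the two classes straight: the forward direction must invoke Theorem \ref{T4} to pass from irreducibility to complete irreducibility \emph{before} Corollary \ref{C1} can be used, whereas the converse needs only the elementary inclusion of c.i.d.s. among i.d.s. in order to reduce to Corollary \ref{C2}. Once these two linking facts are stated explicitly, the corollary is an immediate consequence of the preceding development.
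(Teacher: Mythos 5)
Your proof is correct and follows exactly the route the paper intends: the forward direction via Theorem \ref{T4} (i.d.s.\ implies c.i.d.s.\ in a 5N algebra) followed by Corollary \ref{C1}, and the converse via the inclusion of c.i.d.s.\ among i.d.s.\ followed by Theorem \ref{T2} (equivalently Corollary \ref{C2}). The paper states this corollary without an explicit proof, but the corresponding result in the main text is derived in precisely this way, so there is nothing to add.
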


{\bf II- Quotient algebra}
\begin{defi}
	Let $A$ be a 5N algebra de Nelson, and let  $C$  be a d.s  of $A$. We define  $a\equiv b$ (mod $C$), for $a,b \in A$ if the following conditions are verified:
	$$a\rightarrow b, b\rightarrow a, \sim a\rightarrow \sim b, \sim b\rightarrow \sim a\in C.$$
	It is easy to verify that $\equiv$ is a congruence relation on $A$.The corresponding quotient algebra, which is a 5N algebra, will be represented by $A/C$.
\end{defi}
\begin{teorema}
	\label{T5}   Let $C$ be an i.d.s. of a 5N algebra $A$. Then $A/C$ is either $C_5$ or one of its subalgebras.
\end{teorema}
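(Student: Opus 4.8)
The plan is to prove Theorem \ref{T5} exactly as its main-body counterpart, Theorem \ref{T51a}, of which it is the transcription into the notation of this appendix. First I would record the two structural facts that drive everything: by Theorem \ref{T4} the irreducible $C$ is in fact completely irreducible, hence (Theorem \ref{T41}) bounded to some element $b \ne 1$; and, being an i.d.s., $C$ is a prime filter with $C \subseteq \varphi(C)$ (Lemma \ref{L410}), where $\varphi$ is the Birula-Rasiowa transformation. I would then pass to the canonical epimorphism $h_C\colon A \to A/C$ and describe the quotient by exhibiting its classes explicitly, splitting the argument according to whether $C$ is maximal.

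If $C$ is maximal, the classes are governed by $\varphi$ alone: one has $|1|_C = C$, $|0|_C = \sim C$, and the only possible intermediate class is $\varphi(C)\setminus C$. When $C = \varphi(C)$ the pair $\{C,\sim C\}$ partitions $A$ and $A/C \cong C_2$; when $C \subset \varphi(C)$ the three sets $C$, $\varphi(C)\setminus C$, $\sim C$ partition $A$ and $A/C \cong C_3$. This is precisely Lemma \ref{lema_pre_teo_repres2} (equivalently Theorem \ref{T51}), which I would simply invoke. If, on the other hand, $C$ is not maximal, then by Corollary \ref{C1} there is a unique proper deductive system $M$ with $C \subset M$, and it is maximal, so $M \subseteq \varphi(M)$. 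This produces the nested chain $C \subset M \subseteq \varphi(M) \subseteq \varphi(C) \subset A$, and I would show that the ``gaps'' of this chain are exactly the congruence classes $C$, $M\setminus C$, $\varphi(M)\setminus M$, $\varphi(C)\setminus\varphi(M)$, $\sim C$. The two inclusions for each class are the real work: membership is forced using that a maximal system absorbs the implications of its non-elements (Lemma \ref{L416}) together with the fact that $C$ is bounded to every element of $M\setminus C$, so the relevant implications land in $C$ (Corollary \ref{C99} and rule (N\ref{1112202410})). When $M = \varphi(M)$ the class $\varphi(M)\setminus M$ collapses and one obtains four classes, giving $A/C \cong C_4$; when $M \subset \varphi(M)$ all five classes are distinct and $A/C \cong C_5$. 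This is the content of Lemma \ref{lema_pre_teo_repres}, which I would cite.

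The main obstacle is, as always in these quotient computations, the four-element case: having produced four classes one must still rule out that $A/C$ is the Boolean algebra $C_2\times C_2$ rather than the chain $C_4$. I would settle this exactly as in Lemma \ref{lema_pre_teo_repres}(a): take representatives $x \in \varphi(C)\setminus M$ and $y \in M\setminus C$, assume $x\wedge y \equiv_C 0$, and derive, using the De Morgan laws together with the primeness of $C$ (Lemma \ref{L410}), that $\sim x \in C$ or $\sim y \in C$, whence $x \in \sim C$ or $y \in \complement M$ (the latter because $M = \varphi(M)$ gives $\sim M = \complement M$), each contradicting the choice of representatives. Hence $|x\wedge y|_C \ne |0|_C$, the two middle classes are comparable, and by Observation \ref{R51} the four-element Nelson algebra must be $C_4$. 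Combining the two cases yields $A/C \cong C_i$ for some $2 \le i \le 5$, that is, $A/C$ is $C_5$ or one of its subalgebras.
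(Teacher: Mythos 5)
Your proposal is correct and follows essentially the same route as the paper: it is the argument of Theorem \ref{T51a} and its supporting Lemmas \ref{lema_pre_teo_repres2} and \ref{lema_pre_teo_repres}, splitting on whether $C$ is maximal and identifying the congruence classes with the gaps of the chain $C\subseteq M\subseteq\varphi(M)\subseteq\varphi(C)$, which is also exactly the four-case scheme of the appendix's own proof. If anything, your treatment of the four-class case (ruling out $C_2\times C_2$ via $|x\wedge y|_C\neq|0|_C$ and Observation \ref{R51}) is more complete than the verbatim appendix proof, which leaves that case to ``follow immediately''; it matches the main body's Lemma \ref{lema_pre_teo_repres}(a).
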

\begin{proof} In this proof we will use the fact of the existence of an involution $\varphi$ defined on the family of all prime filters of  a Nelson algebra in the following way: $\varphi(P) =\complement \sim P$, (i.e. the set-theoretical complement of $\sim P=\{x:\sim x \in P\}$), and which have the following property: either $P\subseteq \varphi(P)$ or $\varphi(P)\subseteq P.$
	
	It easy verify that the prime filters $P$ which verify $P\subseteq \varphi(P)$ are deductive systems. (For more details about this involution see \cite{B1} and \cite{B6}).
	
	Because the previous theorems we know that given an i.d.s. $C$ there exists a unique i.d.s. $D$ such that $C\subseteq D$. Therefore, we can write $C\subseteq D\subseteq \varphi(D)\subseteq \varphi(C)$. We will consider the following fours possible cases:
	\begin{itemize}   
		\item[1)] $C\subset D\subset \varphi(D)\subset \varphi(C)$,
		\item[2)] $C\subset D= \varphi(D)\subset \varphi(C)$,
		\item[3)] $C= D\subset \varphi(D)= \varphi(C)$,
		\item[4)] $C= D=\varphi(D)= \varphi(C)$,
	\end{itemize}
	
	and we will prove that we get respectively 5, 4, 3 and 2 equivalence classes.
	
	\begin{itemize}
		\item[1)] $C\subset D\subset \varphi(D)\subset \varphi(C)$. Then, the equivalence classes modulo $C$ are $C$, $D\setminus C$, $\varphi(D)\setminus D$,  $\varphi(C)\setminus \varphi(D)$ and $A\setminus \varphi(C)$. In fact:
		\begin{itemize}
			\item[a)] Let $a,b \in C$. From $a\in C$, $a\rightarrow (b\rightarrow a)=1\in C$ it follows (i)  $b\rightarrow a \in C$. In the same way: (ii) $a\rightarrow b \in C$.
			From $a\in C$, $a\rightarrow (\sim a\rightarrow \sim b)=1\in C$ it follows (iii) $\sim a\rightarrow \sim b\in C$. And in the same way (iv) $\sim b\rightarrow \sim a\in C$.
			From (i)-(iv): $a \equiv b$ (mod C).
			\item[b)] Let $a,b \in D\setminus C$. It is verifiable that $C$ is maximal between the d.s. which don't contain $a$. From $b\notin C$ it follows (i) $b\rightarrow a\in C$. In the  same way (ii) 
			$a\rightarrow b \in C$. From $a,b\in D$ it follows $\sim a, \sim b \notin \varphi (D)$ and from $a,b\notin C$ it follows $\sim a,\sim b\in \varphi(C).$ Therefore 
			$\sim a,\sim b\in \varphi(C)\setminus \varphi(D)$ and besides $\sim a,\sim b \notin D$.
			Because $D$ is a maximal d.s. it follows $\sim a\rightarrow\sim b\in  D$, and $\sim b\rightarrow\sim a\in  D$. 
			
			Assume by contradiction $\sim a\rightarrow\sim b\notin  C$. Then, using the same argument that before, $C$ is maximal between those which don't contains $\sim a\rightarrow\sim b$. Then, because $\sim a \notin C$, we get $\sim a\rightarrow\sim b=\sim a \rightarrow (\sim a\rightarrow\sim b)\in  C$, which is a contradiction.
			Therefore (iii) $\sim a\rightarrow\sim b\in  C$, and in the same way  (iv) $\sim b\rightarrow\sim a\in C.$ From (i)-(iv):   $a \equiv b$ (mod C).
			\item[c)] Let $a,b \in \varphi(D)\setminus D$. $D$ is maximal relative to $a$ and $b$, and therefore $a\rightarrow b \in D$ and $b\rightarrow a \in D$. Assume by contradiction that $a\rightarrow b \notin C$. Using the same argument that before, we get that $C$ is maximal relative to $a\rightarrow b$  and then, because $a\notin C$, we have $a\rightarrow b=a\rightarrow(a\rightarrow b) \in C$, which is a contradiction.  So (i) $a\rightarrow b \in C$, and in the same way  (ii) $b\rightarrow a \in C.$ From $a,b \in \varphi(D)\setminus D$ it follows $\sim a,\sim b \in \varphi(D)\setminus D$, and therefore  (iii) $\sim a \rightarrow\sim b \in C$ and (iv) $\sim b \rightarrow\sim a \in C$. From (i)-(iv): $a \equiv b$ (mod C).
			\item[d)] Let $a,b \in \varphi(C)\setminus \varphi(D)$. Therefore $\sim a,  \sim b\in D\setminus C$. We proved in b) that this implies $\sim a \equiv \sim b$ (mod C) and therefore
			$a \equiv b$ (mod C).
			\item[e)] Let $a,b \in A\setminus \varphi(C)$. Therefore $\sim a,  \sim b\in  C$. We proved in a) that this  implies $\sim a \equiv \sim b$ (mod C) and therefore
			$a \equiv b$ (mod C).
		\end{itemize}
		From a)-e) it follows that there exists at most five equivalence classes. Now, we will prove that $C,D\setminus C, \varphi(D)\setminus D, \varphi(C)\setminus \varphi(D)$ and $A\setminus\varphi(C)$ are these classes.
		\begin{itemize}
			\item[a')] If $a \in C$ and $a \equiv b$ (mod C) then $b\in C$.\\
			From $a, a \rightarrow b \in C$ it follows $b \in C.$
			\item[b')] If $a \in D\setminus C$ and $a \equiv b$ (mod C) then $b\in D\setminus C$.\\
			From $a\in D$, $a \rightarrow b \in C\subset D$ it follows $b \in D.$ Assume by contradiction $b \in C$. Then, because  $b \rightarrow a \in C$, we get  $a \in C$. Therefore, $b\in D\setminus C$.
			\item[c')] If $a \in \varphi(D)\setminus D$ and $a \equiv b$ (mod C) then $b\in \varphi(D)\setminus D$.\\
			Assume by contradiction $b\in D$. Then because $b \rightarrow a \in C\subset D$ we get $a \in D$. Therefore (i) $b\notin D$. Assume now, by contradiction $b\notin \varphi(D).$ Then $\sim b \in D$, and from $\sim b \rightarrow \sim a \in C\subset D$ we get $\sim a \in D$, which is a contradiction because from $a\in \varphi (D)$  if follows $\sim a \notin D$. Therefore (ii) $b\in \varphi(D).$ From (i), (ii): $b\in \varphi(D)\setminus C$.
			\item[d')] If $a \in \varphi(C)\setminus \varphi(D)$ and $a \equiv b$ (mod C) then $b\in \varphi(C)\setminus \varphi(D)$.\\
			From the hypothesis it follows $\sim a \in D\setminus C$ and $\sim a \equiv \sim b$. Then by b') $\sim b \in D\setminus C $ and therefore $b \in \varphi(C)\setminus \varphi(D)$.
			\item[e')] If $a \in A\setminus \varphi(C)$ and $a \equiv b$ (mod C) then $b\in A\setminus \varphi(C)$.
			
		\end{itemize}
		The proofs on the second, third and fourth cases follow inmediately from the above discussion. And this ends the proofs of the theorem.
	\end{itemize}
\end{proof}

{\bf III.- Representation Theorem}
\begin{teorema} Any 5N algebra is a subalgebra of a direct product of chains with at most five elements.
\end{teorema}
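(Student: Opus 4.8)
The plan is to realize the given 5N algebra $A$ as a subdirect product indexed by its completely irreducible deductive systems, and then to identify each factor by invoking the quotient-structure theorem already proved. I may assume $A$ is non-trivial, since the one-element algebra is vacuously a subalgebra of the empty product. First I would collect the family $\mathcal{C}$ of all completely irreducible deductive systems of $A$. The set $\{1\}$ is a proper deductive system, and by Lemma \ref{L47} every proper deductive system is an intersection of completely irreducible ones, so $\bigcap_{C\in\mathcal{C}}C=\{1\}$; in other words, $\mathcal{C}$ is a separating family.

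Next I would define the canonical map $\psi:A\to\prod_{C\in\mathcal{C}}A/C$ by $\psi(x)(C)=|x|_C$, the class of $x$ modulo the congruence $\equiv_C$. Since each coordinate is the natural epimorphism onto the quotient $A/C$, the map $\psi$ is a homomorphism of 5N algebras. For injectivity, suppose $\psi(a)=\psi(b)$. Then $a\equiv_C b$ for every $C\in\mathcal{C}$, which by definition of the congruence means that $a\to b$, $b\to a$, $\sim a\to\sim b$, and $\sim b\to\sim a$ all belong to every $C\in\mathcal{C}$. Hence all four implications lie in $\bigcap_{C\in\mathcal{C}}C=\{1\}$ and therefore equal $1$, so by the characterization of equality in a Nelson algebra (N\ref{1112202411}) we conclude $a=b$. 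Thus $\psi$ embeds $A$ into $\prod_{C\in\mathcal{C}}A/C$.

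Finally, by Theorem \ref{T5}, for each $C\in\mathcal{C}$ the quotient $A/C$ is isomorphic to $C_5$ or to one of its subalgebras; by Observation \ref{R32} these subalgebras are $C_2,C_3,C_4$, each a chain, so every factor is a chain with at most five elements. Composing, $\psi$ exhibits $A$ as isomorphic to a subalgebra of a direct product of chains with at most five elements, which is exactly the assertion.

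The essential mathematical content is already packaged in Theorem \ref{T5}, namely the determination of the structure of the quotients $A/C$; consequently the remaining work is purely the standard subdirect-representation machinery. The only step requiring genuine care is the separating property $\bigcap_{C\in\mathcal{C}}C=\{1\}$, which ultimately rests on the existence of enough completely irreducible deductive systems avoiding any prescribed element distinct from $1$, and I expect this to be the main (though routine) obstacle to write out fully.
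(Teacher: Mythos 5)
Your proposal is correct and follows essentially the same route as the paper: take the (completely) irreducible deductive systems as a separating family via Lemma \ref{L47}, embed $A$ into the product of the quotients by the canonical map, prove injectivity from $\bigcap C=\{1\}$ together with (N\ref{1112202411}), and identify each factor as a chain with at most five elements using the quotient-structure theorem. The only cosmetic difference is that the paper indexes by irreducible deductive systems, which coincide with the completely irreducible ones in a five-valued Nelson algebra (Corollary \ref{C48a}), so the two families are the same.
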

\begin{Dem.}  Let $E =\{C_i\}_{i \in I}$ be a separable family of i.d.s. of $A$, i.e. $\bigcap\limits_{i\in I} C_i =\{1\}$. Assume that $ A$ is a 5N algebra.
	
	Given any  $C\in E$, we know that $ A/C$ is a chain with at most five elements.
	
	Write ${\cal L}_C=  A/C$, and let  ${\cal A}=\prod\limits_{C\in E} {\cal L}_C$, and let $c$ be the natural homomorphism of $ A$  onto $ A/C$.
	
	Our purpose is represent $ A$ as a subalgebra of ${\cal A}$. Given $f \in A$, let $\psi(f) = F\in {\cal A}$, where $F=\{F(C)\}_{C\in E}$, with $F(C)= c(f)\in {\cal L}_C$.
	
	\begin{itemize}
		\item [a)] $\psi$ is a homomorphism of  $ A$ into ${\cal A}$, i.e.
		\begin{itemize}
			\item [1)] $\psi(f\vee g)= \psi (f) \vee \psi(g)$
			\item [2)] $\psi(f\wedge g)= \psi (f) \wedge \psi(g)$
			\item [3)] $\psi(f\rightarrow g)= \psi (f) \rightarrow \psi(g)$
			\item [4)] $\psi(\sim f)= \sim \psi (f)$, for all $f,g \in  A.$
		\end{itemize}
		In fact let $h =f\vee g$, $F= \psi(f)$, $G= \psi(g)$, $H= \psi(h)$; the $H(C)= c(h) =c(f\vee g)= c(f) \vee c(g)= F(C) \vee G(c)$ for any $C\in E$. Therefore 1) $H=F\vee G$, i.e.
		$\psi(f\vee g)= \psi (f) \vee \psi(g)$. In the same way we can verify the equalities 2) - 4). 
		\item[b)] $\psi$ is one-to-one.
		Assume $\psi(f)=\psi(g)$, i.e. $\psi(f)\rightarrow \psi(g)= 	\psi(f \rightarrow g)=1$,\\
		$\psi(g)\rightarrow \psi(f)= 	\psi(g \rightarrow f)=1$,
		$\sim \psi(f)\rightarrow \sim \psi(g)= 	\psi(\sim f \rightarrow \sim g)=1$, and $\sim \psi(g)\rightarrow \sim \psi(f)= 	\psi(\sim g \rightarrow \sim f)=1$.
		
		Therefore $f \rightarrow g, g \rightarrow f, \sim f \rightarrow \sim g, \sim g \rightarrow \sim f \in C$, for any $C\in E.$ Hence
		$f \rightarrow g, g \rightarrow f, \sim f \rightarrow \sim g, \sim g \rightarrow \sim f \in \bigcap\limits_{C\in E} C =\{1\}$, i.e.\\
		$f \rightarrow g= g \rightarrow f= \sim f \rightarrow \sim g= \sim g \rightarrow \sim f=1$, i.e. $f=g$.
	\end{itemize}
	
	Let $A'$ the family of all elements of ${\cal A}$ of the form $\{\psi(f)\}_{f\in A}$. From a), b) it follows that $A'$ is a subalgebra of ${\cal A}$ isomorphic to $A$.
\end{Dem.}
\medskip

\noindent {\bf IV.- Free algebra with $n$ generators}\\[2mm]
We can define in a natural way the concept of Free 5N algebra with a given set of generators.
\medskip
We have studied such algebra with a finite number of generators, and determined that the number of its elements is the following $N$, being $n$ the number of generators:
$$N = 2^{2^{2n}} \prod\limits_{m=1}^{n} \left((2^{2^{n-m}} +1)^{{n \choose m} 2^{n-m}}\right). $$
\medskip

\noindent We construct the free 5N algebra with $1$ generator, and we verified that the number of its elements is $48$.\\[2mm]
The proofs of these results will be indicated in a next paper.

\hfill\includegraphics[width=0.5\textwidth]{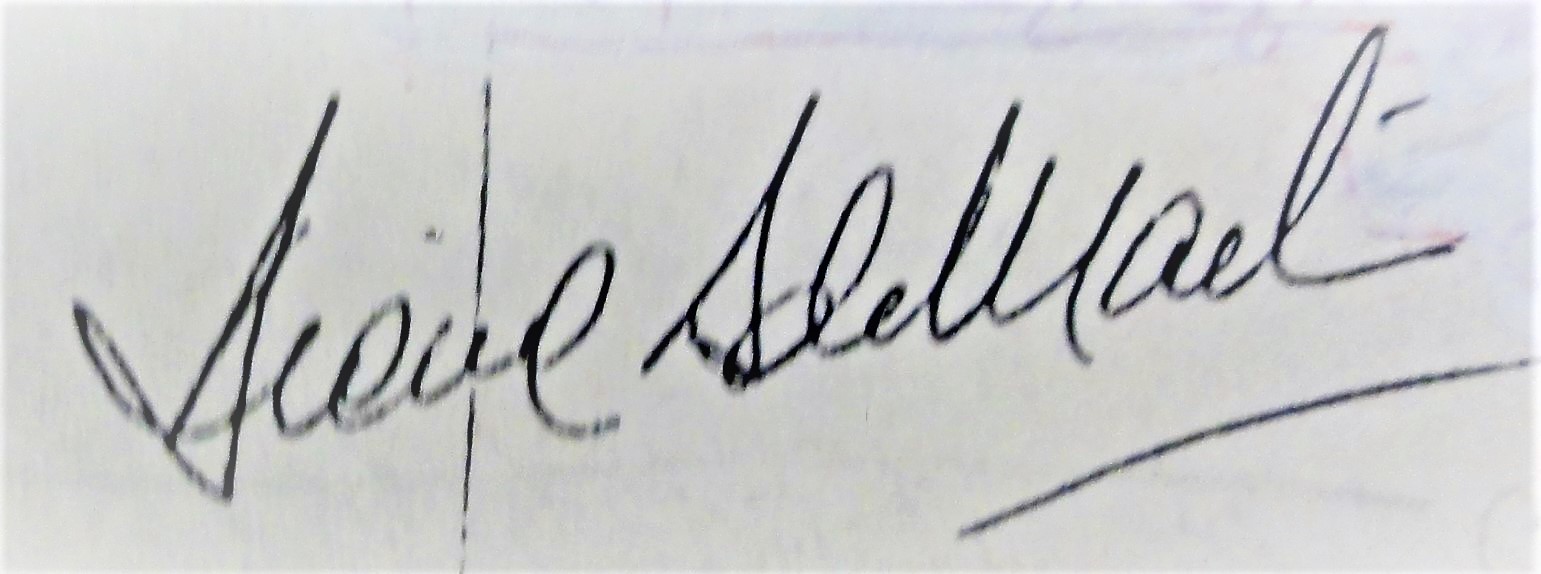}

\end{document}